\def\Jet{J^\infty}
\def\jet{j^\infty}
\def\F{\mathcal{F}}
\def\dif{\mathrm{d}}
\def\lap{\Delta}
\def\d{\dagger}
\def\Ct{\mathcal{C}}
\newcommand\couple[1]{\langle #1\rangle}
\newcommand\bs[1]{\boldsymbol{#1}}
\newcommand\isempty[3]{{\ifx&#1&#2\else#3\fi}}
\newcommand\term[1]{\emph{#1}}
\newcommand\dvol[1][M]{\dif^nx}
\newcommand{\hJet}[1]{\ol{\Jet_{#1}}}
\newcommand{\evder}[2][]{\partial_{#2}^{\ifx&#1&\else(#1)\fi}}
\newcommand{\ol}[1]{\overline{#1}}
\newcommand{\wh}[1]{\widehat{#1}}
\newcommand{\hdif}{\overline{\dif}}
\newcommand{\difv}[2][]{\frac{\delta #1}{\delta #2}} 
\newcommand{\ldifv}[2][]{\frac{\overleftarrow{\delta #1}}{\delta #2}} 
\newcommand{\rdifv}[2][]{\frac{\overrightarrow{\delta #1}}{\delta #2}}
\newcommand{\dift}[2][]{\frac{\dif #1}{\dif #2}}
\newcommand{\diftat}[3][]{\left.\frac{\dif #1}{\dif #2}\right|_{#2=#3}}
\newcommand{\difp}[2][]{\frac{\partial #1}{\partial #2}}
\newcommand{\dtotal}[2][]{D_{#2}\isempty{#1}{}{(#1)}}
\newcommand\schouten[1]{\lshad #1 \rshad}
\newcommand\bschouten[1]{\big\lshad #1 \big\rshad}
\newcommand{\BBR}{\mathbb{R}}\newcommand{\BBC}{\mathbb{C}}
\newcommand{\BBZ}{\mathbb{Z}}
\newcommand{\cC}{\mathcal{C}}
\newcommand{\cE}{\mathcal{E}}
\newcommand{\cH}{\mathcal{H}}
  \newcommand{\gM}{\mathfrak{M}}
  \newcommand{\gN}{\mathfrak{N}}
\newcommand{\bq}{{\boldsymbol{q}}}
  \newcommand{\bx}{x}
  \newcommand{\bu}{{\boldsymbol{u}}}
  \newcommand{\bby}{y}
  \newcommand{\bz}{z}
\newcommand{\bpi}{{\boldsymbol{\pi}}}
\newcommand{\dd}{\partial}
\newcommand{\Id}{{\mathrm d}}
  \newcommand{\lde}{\overleftarrow{\delta}\!\!}
  \newcommand{\ov}{\overline}
\newcommand{\BV}{{\text{\textup{BV}}}}
\newcommand{\lshad}{[\![}
\newcommand{\rshad}{]\!]}
  \newcommand{\veps}{\varepsilon}
  \newcommand{\de}{\delta}
  \DeclareMathOperator{\rank}{rank}
\DeclareMathOperator{\id}{id}
\DeclareMathOperator{\gh}{gh}
\theoremstyle{definition}
\newtheorem{counterexample}[example]{Counterexample}
\begin{document}


\ShortArticleName{On the geometry of the BV-\/Laplacian}
\ArticleName{On the geometry of the Batalin-Vilkovisky Laplacian}

\Author{Arthemy V. KISELEV and Sietse RINGERS}
\AuthorNameForHeading{A. V. Kiselev and S. Ringers}
\Address{Johann Ber\-nou\-lli Institute for Mathematics and Computer Science, University of Groningen, P.O.~Box 407, 9700~AK Groningen, The Netherlands}
\Email{\href{A.V.Kiselev@rug.nl}{A.V.Kiselev@rug.nl}, \href{mailto:S.Ringers@rug.nl}{S.Ringers@rug.nl}}

\Abstract{
We define a jet\/-\/space analog of the BV-\/Laplacian, avoiding 
delta\/-\/functions and infinite constants;
instead we show that the main properties of the BV-Laplacian and its relation to the Schouten bracket originate from the underlying jet-space geometry.
}
\Keywords{jet space, BV-formalism, Schouten bracket, BV-Laplacian, regularization, quantum master equation}

\Classification{%
   58A20; 
   81S10; 
   81T13
}

\thispagestyle{empty}

\subsection*{Introduction}
The Batalin\/--\/Vilkovisky (BV) Laplacian $\lap_\BV$ is a necessary ingredient in the quantization of gauge\/-\/invariant systems of Euler\/--\/Lagrange equations~\cite{BV1,BV2,HT}. Introduced after the generalization of the standard calculus on a smooth manifold (which can be viewed as the set of locations for a material point~\cite{MMKM}) to the variational calculus of fields over space-time~\cite{ehresmann,ArthemyBook,Olver}, the BV-\/Laplacian patterns upon the familiar construction $\lap = \dif_{\text{dR}}*\dif_{\text{dR}}$ (here $\dif_{\text{dR}}$ is the de~Rham differential on a manifold and $*$~is the Hodge star). In the new geometry of gauge fields, the construction of the BV-\/Laplacian $\lap_\BV$ relies on the presence of canonically conjugate pairs of variables such as fields and antifields or (possibly, higher) ghost\/-\/antighost pairs, which stem from the derivation of the Euler\/--\/Lagrange equations of motion $\delta S = 0$ and (possibly, higher generations of) Noether relations, respectively (see~\cite{BV1,BV2,BRS1,BRS2,BRST3} and~\cite{ArthemyPrague,ArthemyBook}). This relates the BV-\/Laplacian $\lap_\BV$ to the variational Schouten bracket $\lshad\,,\rshad$ (the odd Poisson bracket, or \emph{antibracket}~\cite{witten}), which measures for the operator $\lap_\BV$ its deviation from being a derivation.

The Laplace equation $\lap_\BV(F)=0$ constrains the
(products of) integral functionals~$F$ of fields and antifields in such
a way that Feynman's path integral of a Laplace equation's solution~$F$
over the space of admissible fields is essentially independent of the
non\/-\/physical antifields. As soon as the setup becomes quantum and
all objects depend also on the Planck constant~$\hbar$, the Laplace
equation
$\Delta\bigl(\mathcal{O}^\hbar\cdot\exp(iS^\hbar_{\text{BV}}/\hbar)\bigr)=0$
selects the observables~$\mathcal{O}^\hbar$ (here $i^2=-1$ and
$S^\hbar_{\text{BV}}$ is the extension in powers of~$\hbar$ for the full
BV-\/action~$S_{\text{BV}}=S+\ldots$ of a given gauge\/-\/invariant
model $\delta S=0$). This approach yields the quantum master equation
upon~$S^\hbar_{\text{BV}}$ and creates the cohomology groups with
respect to the quantum BV-\/differential
$\Omega = -i\hbar\Delta+[\![S^\hbar_{\text{BV}},\cdot\,]\!]$.
The observables are $\Omega$-\/closed,
$\Omega(\mathcal{O}^\hbar)=0$; Feynman's path integral is
then used to calculate their expectation values and correlations.

On top of the difficulties which are immanent to a definition of the path integral, 
the BV-\/Lap\-la\-cian $\lap_\BV$ itself often suffers from a necessity to be regularized manually if one wishes to avoid the otherwise appearing ``infinite constants'' or Dirac's delta-distributions (e.g.,\ see\:\cite{CF}, in which such constants appear).\ We now argue that such difficulties,\ leading to a necessity to regularize the object\:$\lap_\BV$,\ are brought into the picture by an incomplete 
utilization 
of the geometry at hand -- 
so that the 
manual re\-gu\-la\-ri\-za\-ti\-on procedure is in fact redundant: we claim that no sources of divergence are built into the genuine definition of $\lap_\BV$. The intrinsic self\/-\/regularization of the BV-\/Laplacian, the necessity of which was long stated 
in the mathematical physics literature, is 
achieved 
in this paper.

We approach the regularization problem for the BV-\/Laplacian~$\lap_\BV$ in terms of the geometry which properly takes into account (i) the Euler\/--\/Lagrange equations of motion $\cE=\bigl\{\delta S=\cE_\alpha\,\delta\phi^\alpha=0\bigr\}$ upon the physical fields~$\phi^\alpha$, (ii) Noether's identities $\Phi=\bigl\{\Phi^\mu\bigl(x,[\phi^\alpha],[\cE_\beta]\bigr)\equiv0\bigr\}$ that hold regardless of 
whether the fields satisfy the equations of motions,
and possibly (iii) higher generations of constraints $\chi_i=\bigl\{\chi_i^b\bigl(x,[\phi^\alpha],[\cE_\beta],[\Phi^\mu],\ldots,[\chi_{i-1}^\nu]\bigr)\equiv0\bigr\}$ between the equations~$\cE$, Noether's identities~$\Phi$, etc.\ (see~\cite[Ch.\,2]{ArthemyBook}). The BV-\/setup is then achieved in several standard steps~\cite{BV1,BV2}. We first recall that differential equations of any nature (e.g., $\cE_\alpha=0$ or $\Phi^\mu=0$) map their arguments to spaces of sections of auxiliary bundles so that the pre\/-\/image of the zero section, which stands in the equations' right\/-\/hand sides, presents particular interest; in section~\ref{SecSpace} below we recall in detail a construction of the modules $P_0\ni\cE$, $P_1\ni\Phi$, $P_2\ni\chi_2$, $\ldots$, $P_\lambda\ni\chi_\lambda$ of equations, identities, and higher generations of constraints. For every $i=0,\ldots,\lambda$ we then introduce the dual space $\widehat{P}_i$ of linear, form\/-\/valued functions on~$P_i$ by recognizing each~$\widehat{P}_i$ as the module of sections of the dual bundle induced over the product over~$M\ni x$ of the infinite jet space~$J^\infty(\pi)$ of physical fields~$\phi^\alpha$ and, whenever appropriate, jet spaces that encode Noether's identities~$\Phi$, $\chi_2$, $\ldots$,~$\chi_{i-1}$. Thirdly, by using the concept of \emph{neighbours} (c.f.~\cite{Voronov}) we reverse the (ghost-)\/parity of the duals and obtain the odd\/-\/parity modules~$\Pi\widehat{P}_i$. 

In agreement with the BV-\/procedure, we now change the scene by `forgetting' that differential constraints 
$\Phi$, $\chi_2$, $\ldots$, $\chi_\lambda$ involve the fields~$\phi^\alpha$, equations of motion, and lower\/-\/order generations of identities.\footnote{Indeed, the equations $\mathcal{E}_\alpha = 0$ or equations between equations $\Phi^\mu = 0$, or their higher analogs are (non)linear differential operators that map the fields $\phi^\alpha$, equations $\mathcal{E}_\beta$, etc., to sections of some vector bundles with coordinates $\mathcal{E}_\alpha$, $\Phi^\mu$, etc., along the fibres. It is a Whitney sum of those bundles which we now take, obtaining the bundle $\bpi$ and also introducing its dual $\wh\bpi$ (see section~\ref{SecSpace} below).} Namely, we replace the respective objects of opposite ghost\/-\/parities by introducing the full set of BV-\/variables~$(\bq,\bq^\d)$ and constructing the $\BBZ_2$-\/graded bundle~$\pi_\BV$ over the space\/-\/time~$M$; we retain the physical fields~$\phi^\alpha$ among~$q^a$'s but also produce\footnote{The traditional notation is $\bq=\{\phi^\alpha,\gamma^{\d,\mu},c^\d_\nu,\ldots\}$ for even\/-\/parity BV-\/variables stemming from~$P_i$'s whereas $\bq^\d=\{\phi^\d_\alpha,\gamma_\mu=\bigl(\gamma^\d\bigr)^\d_\mu,c^\nu=\bigl(c^\d\bigr)^{\d,\nu},\ldots\}$ are odd\/-\/parity BV-\/variables coming from the odd neighbours~$\Pi\widehat{P}_i$.}
the odd \emph{antifields}~$\phi^\d_\alpha$ as the respective part of~$\bq^\d$, and also the odd \emph{ghosts}~$\gamma_\mu$ and even\/-\/parity \emph{antighosts}~$\gamma^{\d,\mu}$ or higher ghost\/-\/antighost pairs $c^\alpha\leftrightarrow c^\d_\alpha$ (see also~\cite{l*-coverings} in this context). This argument yields the BV-\/zoo of canonically conjugate pairs $q^a\leftrightarrow q^\d_a$ of (anti)\/fields or (anti)\/ghosts and their derivatives along~$M$, that is, along the base manifold in the bundle~$\pi$ of physical fields. 

In the course of the scene change, the form\/-\/valued coupling between the modules~$P_i$ of differential equations of all sorts and the neighbours~$\Pi\widehat{P}_i$ takes in the BV\/-\/setup the shape of an $\BBR$-{} or $\BBC$-\/valued coupling~$\langle\,,\,\rangle$ between the variations of canonically conjugate variables: $\langle\delta q^a,\delta q^\d_b\rangle=+1\cdot\delta^a_b=
-\langle\delta q^\d_b,\delta q^a\rangle$ (c.f.~\cite[\S37]{MMKM}). Given this coupling and a proper count of the ghost $\BBZ_2$-\/parity (as given by the operator~$\Pi$) and $\BBZ$-\/grading (stemming from differential forms such as the variations), the introduction of an odd Poisson bracket~\eqref{EqDefSchouten} is immediate. We proceed with a definition of the BV-\/Laplacian~$\lap_\BV$ by Eq.~\eqref{eq:DefLaplacian}; in a purely even initial setup, this yields a familiar coordinate expression for this operator:
\[
\lap_\BV=\left.\left(\frac{\overleftarrow{\delta}}{\delta q^a}\circ
 \frac{\overleftarrow{\delta}}{\delta q^\d_a}\right)\right|_{(x,[\bq],[\bq^\d])}.
\]
The core idea of its 
self\/-\/regularization roots in the congruence of attachment points for $\langle\delta\bq|$ and $|\delta\bq^\d\rangle$ in the course of evaluation of the BV-\/Laplacian of a functional at a section of the bundle~$\pi_\BV$ (that is, for a given configuration of even physical fields~$\phi^\alpha$, antighosts~$\gamma^\d_a$ etc., and for a certain choice of the antifields $\bq^\d$; the path integrals of $\mathcal{O}^\hbar\cdot\exp(iS^\hbar_\BV/\hbar)$ in the Euler\/--\/Lagrange model at hand is in fact independent of~$\bq^\d$ whenever the integrands $\mathcal{O}^\hbar\cdot\exp(iS^\hbar_\BV/\hbar)$ containing the quantum BV-action $S^\hbar_\BV$ satisfy the Laplace equation).

A regularization of the BV-\/Laplacian is often described in the literature in terms of Dirac's $\delta$-\/distributions; however, we claim that the intrinsic self\/-\/regularization scheme for~$\Delta_\BV$ 
is parallel to the identically\/-\/zero value of the coupling for usual covectors and vectors attached at two different points of a manifold.

This note is structured as follows. After introducing some notation and conventions, we inductively construct the space $\ol{\mathfrak{M}}^n(\pi_\BV)$ of local functionals 
(some of which, after a transition to the quantum setup, are the observables $\mathcal{O}$
) 
by using integral functionals such as the BV-\/action~$S_\BV$ as building blocks. We then recall the definition of the variational Schouten bracket $\lshad\,,\rshad$ as the odd Poisson bracket on the space $\ol{H}^n(\pi_\BV)$ of building blocks, then extending the antibracket to the entire linear space $\ol{\mathfrak{M}}^n(\pi_\BV)$ of local functionals by the Leibniz rule. We formulate a geometric definition of the BV-\/Laplacian in such a way that $\lap_\BV$~requires no external regularization; the operator $\lap_\BV$ satisfies -- not just formally but in earnest -- the standard product rule
formula~\eqref{eq:laplacian-on-product} that involves the variational Schouten bracket $\lshad\,,\rshad$ on $\ol{\mathfrak{M}}^n(\pi_\BV)$. 
We then pass to a more functional analytic approach, gaining a greater flexibility in dealing with variations of functionals, and we verify the main properties of the BV-Laplacian. Finally, we illustrate our reasoning in the quantum setup by a standard derivation of the quantum master equation and by a construction of the quantum BV-differential.

This note 
may be regarded as a continuation of the review~\cite{ArthemyPrague}; at the same time, we further the line of reasoning from~\cite{Lorentz12}.

\section{The space of BV-\/functionals}
\label{SecSpace}\label{sec:preliminaries}
Let us first fix some notation, in most cases matching that from~\cite{ArthemyPrague}, \cite{ArthemyBook} and~\cite{protaras} (for a more detailed exposition of these matters, see for example~\cite{ArthemyBook,topical,redbook,Olver}). Let $\pi \colon E \to M$ be a vector bundle of rank~$m$ over a smooth real oriented manifold of dimension~$n$; in this paper we assume all maps to be smooth. We let $x^i$~be the coordinates, with indices $i,j,k,\dots$, along the base manifold; $\phi^\alpha$ are the fibre coordinates with indices $\alpha,\beta,\gamma\dots$.

We take the infinite jet space $\pi_\infty \colon \Jet(\pi) \to M$ associated with this bundle~\cite{ehresmann,Olver}; a point from the jet space is then $\theta = (x^i,\phi^\alpha,\phi^\alpha_{x^i},\phi^\alpha_{x^i x^j},\dots,\phi^\alpha_\sigma,\dots) \in \Jet(\pi)$, where $\sigma$~is a multi\/-\/index and we put~$\phi^\alpha_\varnothing\equiv\phi^\alpha$. If~$s \in \Gamma(\pi)$ is a section of~$\pi$, or a \emph{field}, we denote with $\jet(s)$ its infinite jet, which is a section $\jet(s) \in \Gamma(\pi_\infty)$. Its value at $x \in M$ is $\jet_x(s) = (x^i, s^\alpha(x), \difp[s^\alpha]{x^i}(x), \dots,\frac{\partial^{|\sigma|}s^\alpha}{\partial x^\sigma}(x),\dots) \in \Jet(\pi)$.
We denote by $\F(\pi)$ the ring of smooth functions on the infinite jet space; the space of top\/-\/degree horizontal forms on~$J^\infty(\pi)$
is denoted by $\ol\Lambda^n(\pi)$. The highest horizontal cohomology, i.e., the space of equivalence classes of $n$-\/forms from $\ol\Lambda^n(\pi)$ modulo the image of the horizontal exterior differential $\hdif$ on $\Jet(\pi)$, is denoted by $\ol{H}^n(\pi)$; the equivalence class of $\omega \in \ol\Lambda^n(\pi)$ is denoted by $\int\omega \in \ol{H}^n(\pi)$. We will assume that the sections are such that integration by parts is allowed and does not result in any boundary terms (for example, the base manifold is compact, or the sections all have compact support, or decay sufficiently fast towards infinity, or are periodic). Lastly, the Euler operator is $\delta = \int \dif_{\mathcal{C}}\,\cdot\,$, where $\dif_\mathcal{C}$ is the Cartan differential, so that the variational derivative with respect to $\phi^\alpha$ is $\difv{\phi^\alpha} = \sum_{|\sigma|\geqslant0}(-)^\sigma \dtotal\sigma\difp{\phi^\alpha}$.

Let $\xi$~be a vector bundle over $\Jet(\pi)$, and suppose $s_1$ and $s_2$ are two sections of this bundle. They are said to be \term{horizontally equivalent} \cite{l*-coverings} at a point $\theta \in \Jet(\pi)$ if $\dtotal\sigma(s_1^\alpha) = \dtotal\sigma(s_2^\alpha)$ at $\theta$ for all multi\/-\/indices $\sigma$ and fibre\/-\/indices~$\alpha$. Denote the equivalence class by~$[s]_\theta$. The set
\[
\hJet\pi(\xi) := \{ [s]_\theta \mid s \in \Gamma(\xi), \, \theta \in \Jet(\pi) \}
\]
is called the \term{horizontal jet bundle} of~$\xi$. It is clearly a bundle over $\Jet(\pi)$, whose elements above $\theta$ are determined by all the derivatives $s^\alpha_\sigma \mathrel{{:}{=}} D_\sigma(s^\alpha)$ for all multi\/-\/indices $\sigma$ and fibre\/-\/indices~$\alpha$.

Now suppose $\zeta$~is a bundle over~$M$. Then the pullback bundle $\pi^*_\infty(\zeta)$ is a bundle over $\Jet(\pi)$, so we may consider its horizontal jet bundle $\hJet\pi(\pi_\infty^*(\zeta))$. (When no confusion is possible, we shall denote this by $\hJet\pi(\zeta)$.) The induced bundle $\pi^*_\infty(\zeta_\infty) = \pi_\infty \mathbin{{\times}_M} \zeta_\infty$ is equivalent as a bundle to $\hJet\pi(\zeta)\to M$ (see e.g.,~\cite{protaras} for a proof). This identification endows the horizontal jet space $\hJet\pi(\zeta)$ with the Cartan connection~-- namely the pullback connection on $\pi_\infty^*(\Jet(\zeta))$. Therefore there exist total derivatives $\dtotal{i}$ on the horizontal jet space $\hJet\pi(\zeta)$; in coordinates these are just the operators (denoting the fibre coordinate of $\zeta$ with $u$)\footnote{Here and in the remainder of this note we use Einstein's summation convention: whenever an 
index appears twice in an expression, once as an upper 
and once as a lower index, a summation over that index is implicitly present.}
\begin{align*}
\dtotal{i} = \difp{x^i} + \phi^\alpha_{\sigma+1_i}\difp{\phi^\alpha_\sigma} 
  + u^\beta_{\tau+1_i}\difp{u^\beta_\tau},\qquad 1\leqslant i\leqslant n.
\end{align*}
Thus, instead of the horizontal derivatives $\dtotal[u^\alpha]{\sigma}$ of sections there are now the fibre coordinates $u^\alpha_\sigma$, which have no derivatives along the fibre coordinates $\pi_\infty$, i.e., $\dd u^\beta_\tau/\dd \phi^\alpha_\sigma=0$.

Let $\zeta$ be a vector bundle over $M$. We denote with $\zeta^\d$ its dual, i.e. the vector bundle whose fibre over $x$ is the dual of the fibre of $\zeta$ over $x$. We also define $\wh\zeta = \zeta^\d\otimes\Lambda^n(M)$.

If $P$~is the space of sections of some vector bundle over $\Jet(\pi)$ (for example, $P$~is the space of sections that contains a certain differential equation $F \in P$, or $P = \Gamma(\pi_\infty^*(\pi)) \mathrel{{=}{:}} \varkappa(\pi)$, to which we return on the following page), then we denote with $P^\d = \text{Hom}_{\F(\pi)}(P,\F(\pi))$ its dual with respect to $\F(\pi)$, and with $\wh{P} = \text{Hom}_{\F(\pi)}(P,\ol\Lambda^n(\pi))$ its dual with respect to $\ol\Lambda^n(\pi)$. We will denote the couplings between elements of~$P$ and either $P^\d$ or $\wh{P}$ by $\langle p',p\rangle$, which lands in either $\F(\pi)$ or $\ol\Lambda^n(\pi)$, and where $p'$ is an element of either $P^\d$ or $\wh{P}$.

Returning to the vector bundle $\zeta$~over~$M$, we see that $\pi_\infty^*(\zeta)$ is a bundle over~$\Jet(\pi)$. Set $P_\zeta = \Gamma(\pi_\infty^*(\zeta))$. Then we have that $P_\zeta^\d = \text{Hom}_{\F(\pi)}(P,\F(\pi)) = \Gamma(\pi_\infty^*(\zeta^\d))$ and $\wh{P}_\zeta = \text{Hom}_{\F(\pi)}(P,\ol\Lambda^n(\pi)) = \Gamma\big(\pi_\infty^*(\wh\zeta)\big)$. Reversing the parity of the fibres~\cite{ArthemyPrague,ArthemyAlgebroids,Voronov} of $\wh\zeta$ using the parity\/-\/reversion operator~$\Pi$, we obtain the odd bundle $\Pi\wh{\zeta}$; if the fibre coordinates of~$\zeta$ are~$u^\alpha$, then we denote the fibre coordinates of $\Pi\wh\zeta$ by $u^\d_\alpha$ (note that the coordinate index switches position). We denote by $\Pi\wh{P}_\zeta = \Gamma(\pi_\infty^*(\Pi\wh\zeta))$
the associated space of sections on $\Jet(\pi)$.

\begin{example}\label{ExTrueAntifields} 
The space of generating sections for evolutionary derivatives is $\varkappa(\pi) = \Gamma(\pi_\infty^*(\pi)) = \Gamma(\pi)\otimes_{C^\infty(M)}\F(\pi)$. 
On the other hand, the range of the Euler operator~$\delta$ is naturally isomorphic to $\wh{\varkappa(\pi)}$ (see, e.g.,~\cite{topical,redbook}). Therefore, since in gauge theories the relevant system of differential equations $\mathcal{E}$ is 
Euler\/--\/Lagrange, i.e., $\mathcal{E} = \{\delta S=0 \in \wh{\varkappa(\pi)}\}$ for some $S \in \ol\Lambda^n(\pi)$, the space of sections~$P_0$ that contains the differential equation $\mathcal{E} = \{\delta S=0\}$ is~$P_0 \simeq \wh{\varkappa(\pi)}$.
Applying the formalism described above to $\wh{\varkappa(\pi)}$, we see that $\Pi\widehat{P}_0 = \Pi\varkappa(\pi) = \Gamma\big(\pi_\infty^*(\Pi\pi)\big)$, and taking the horizontal jet space $\hJet\pi(\wh\pi\mathbin{{\times}_M}\Pi\pi)$ we would obtain a jet bundle with coordinates $\phi^\alpha, \cE_\alpha$, $\cE^{\d,\alpha}$, and their derivatives.
\end{example}

Let us emphasize that the entire BV-\/geometry is completely determined by the initial bundle~$\pi$ of physical fields~$\phi^\alpha$ over~$M$ and by the action~$S\bigl(x,[\phi]\bigr)$ which yields the equations of motion and the structure of interrelations between Noether's identities in a gauge\/-\/invariant model at hand. For consistency, we now recall the inductive construction of the BV-\/bundle~$\pi_\BV$ over the base manifold~$M$; we consider the case when the total space~$E$ of the vector bundle~$\pi\colon E\to M$ is a non\/-\/graded, purely commutative smooth real manifold (see~\cite[Ch.\,2,\ 11]{ArthemyBook} for more detail and also~\cite{Lorentz12} where a non\/-\/graded non\/-\/commutative setup is addressed). An extension of the formalism to a $\BBZ_2$-{} or $\BBZ$-\/graded setup is standard (specifically to the Poisson sigma\/-\/model of~\cite{CF}, the sign conventions are explained in~\cite{FulpLadaStasheffSrni}).

For a given bundle~$\pi$ and action functional~$S\in\overline{H}^n(\pi)$, the $m$~equations of motion $\cE=\bigl\{\delta S=\cE_\alpha\,\delta\phi^\alpha=0\bigr\}$ upon $\phi^1,\ldots,\phi^m$ are 
   (up to a tensor multiplication by~$\overline{\Lambda}^n(\pi)$) 
sections of the induced bundle~$\pi_\infty^*(\widehat{\pi})$ over the infinite jet bundle~$J^\infty(\pi)\xrightarrow{\:\pi_{\infty}\:}M$; note that the dual bundle is introduced by using the $\overline{\Lambda}^n(\pi)$-\/valued coupling. Indeed, each equation in the Euler\/--\/Lagrange system is a variational covector: the variations $\cE_\alpha\,\delta\phi^\alpha={:}\Id_{\cC}(S){:}$ are obtained after integration by parts in the equivalence class of~$\Id_{\cC}(S)$. Because each coefficient $\cE_\alpha\bigl(x,[\phi]\bigr)$ of~$\delta\phi^\alpha$ in the left\/-\/hand side of system~$\cE$ belongs to the horizontal module~$\Gamma\bigl(\pi_\infty^*(\widehat{\pi})\bigr)=\widehat{\varkappa(\pi)}$, its neighbour is the module~$\Gamma\bigl(\pi_\infty^*(\Pi\pi)\bigr)$ of odd\/-\/parity evolutionary vector fields. We emphasize that, as soon as the equations of motion are derived from the action, their reparametrizations~$\cE'=\cE'(x,[\cE])$ --~yielding an equivalent but differently written system, e.g., in which two equations are swapped: $\cE_1'\mathrel{{:}{=}}\cE_2=0$ and $\cE_2'\mathrel{{:}{=}}\cE_1=0$~-- are in principle entirely independent from a change of coordinates~$\phi'=\phi'(x,[\phi])$ along the fibres of~$\pi$. However, it is standard to enforce the correspondence between the fields~$\phi^\alpha$ and equations~$\cE_\alpha=0$, i.e., to label the sections~$\cE\in\Gamma\bigl(\pi_\infty^*(\widehat{\pi})\bigr)$ of the bundle induced from~$\widehat{\pi}$ by~$\pi_\infty$ by using one more time the variables~$\phi^\alpha$ along fibres in~$\pi$ instead of a rigorous use of the 
dual bundle~$\widehat{\pi}\colon\widehat{E}\to M$. We thus set $\cE\in P_0\simeq\Gamma\bigl(\pi_\infty^*(\pi)\bigr)$, which implies that $\Pi\widehat{P}_0\simeq\Gamma\bigl(\pi_\infty^*(\Pi\widehat{\pi})\bigr)$; both isomorphisms are not canonical. The fibre coordinates in~$\pi_0\mathrel{{:}{=}}\pi$ and $\widehat{\pi}_0=\widehat{\pi}$ in~$P_0$ and~$\Pi\widehat{P}_0$ are the physical fields~$\phi^\alpha$ and the antifields~$\phi^\d_\alpha$, respectively. By this argument we construct the Whitney sum~$J^\infty(\pi)\mathbin{{\times}_M}J^\infty(\Pi\widehat{\pi})\to M$ of two infinite jet bundles.\footnote{A rigorous approach to the correspondence between fields and Euler\/--\/Lagrange equations, which we tracked in Example~\ref{ExTrueAntifields} above, would give us the horizontal jet bundles with fibre coordinates~$\cE_{\alpha,\sigma}$ and~$\cE^{\d,\beta}_\tau$ over~$J^\infty(\pi)$, here $|\sigma|,|\tau|\geqslant0$.}

Proceeding with the \emph{linear} Noether identities $\Phi=\bigl\{\Phi^\mu\bigl(x,[\phi],[\cE]\bigr)=0$, $1\leqslant\mu\leqslant m_1\bigr\}\in %
P_1=\Gamma\bigl(\pi_\infty\circ(\pi_\infty^*(\pi))^*(\pi_1)\bigr)$, where the linearity is specific to the geometry of Euler\/--\/Lagrange equations, we recognize the auxiliary vector bundle~$\pi_1$ over~$M$ such that $\Phi\colon\overline{J^\infty_\pi}(\pi_0)\to\Gamma(\pi_1)$ is a linear differential operator with respect to~$\cE_\alpha$ and may be nonlinear in~$\phi^\alpha$. We conventionally denote not by~$\Phi^\mu$ but by~$\gamma^{\d,\mu}$ the fibre coordinates in~$\pi_1$. Next, we construct the horizontal module~$\Pi\widehat{P}_1$ of sections by employing the 
dual, odd\/-\/parity bundle~$\widehat{\pi}_1$ with fibre coordinates~$\gamma_\mu$. 
This 
produces $m_1$~canonically conjugate ghost\/-\/antighost pairs $\gamma_\mu\leftrightarrow\gamma^{\d,\mu}$. As we did it at the previous step, we extend the setup further by taking the Whitney sum $J^\infty(\pi)\mathbin{{\times}_M}J^\infty(\Pi\widehat{\pi})\mathbin{{\times}_M}J^\infty(\pi_1)\mathbin{{\times}_M}J^\infty(\Pi\widehat{\pi}_1)\to M$ for the two generations of dual bundles.

An inductive reasoning over the finite number of higher Noether relations between the already\/-\/obtained identities gives rise to $\lambda+1$ sets of -- in total, $m+m_1+\ldots+m_\lambda$ -- pairs of dual, opposite\/-\/parity bundles~$\pi_i\leftrightarrow\Pi\widehat{\pi}_i$ of ranks~$m_i$, here~$0\leqslant i\leqslant\lambda$ and~$m_0=m$. The Whitney sum 
\begin{align*}
\bpi &= \mathop{{\bigotimes}_{\lefteqn{M}}}\limits_{i=0}^\lambda \quad\pi_i\\
\intertext{over the base~$M$ determines the Batalin\/--\/Vilkovisky (BV) superbundle}
\pi_\BV&=\bpi_\infty^*(\Pi\widehat{\bpi}_\infty)\colon \overline{J^\infty_\bpi}(\Pi\widehat{\bpi})=J^\infty(\bpi)\mathbin{{\times}_M} J^\infty(\Pi\widehat{\bpi})\longrightarrow M.
\end{align*}
We denote by
\begin{align*}
\bq&=(\phi,\gamma^\d,c^\d,\ldots)\\
\intertext{the entire set of even\/-\/parity BV-\/coordinates and by}
\bq^\d&=(\phi^\d,\gamma,c,\ldots)
\end{align*}
their odd\/-\/parity duals; the convention~\smash{$\left(\gamma^\d\right)^\d=\gamma$} resolves a minor confusion in the 
usually accep\-ted notation.

\begin{remark}
When speaking of parities~$(-)^{(\cdot)}$, we refer to the \term{ghost numbers}~$\gh(\cdot)$ and the respective parity reversion operator~$\Pi$ (for instance, $\gh(\phi^\alpha)=0$, $\gh(\phi^\d_\alpha)=1$, etc.); the entire collection of auxiliary bundles -- from~$\pi_0$ for~$P_0$ or~$\pi_1$ for~$P_1$ to~$\pi_\lambda$ for~$P_\lambda$ -- is even with respect to the ghost parity. The ghost parities of objects that belong to~$P_i$ and~$\Pi\widehat{P}_i$ at each~$i$ are always opposite: the former are ghost\/-\/parity even and the latter are odd. Should one consider an independently graded setup of fields~$\phi^\alpha$ as fibres of a superbundle~$\pi=\bigl(\pi^{\overline{0}}|\pi^{\overline{1}}\bigr)$ or a setup with $\BBZ$-\/graded differential forms taken as fields (see~\cite{CF}), and should one then introduce the action functional~$S\in\overline{H}^n(\pi)$, a count of extra signs in the equations of motion, Noether identities etc., would be tedious but straightforward (c.f. Example~\ref{ex:CF} on page~\pageref{ex:CF}).

Note further that the choice of canonically conjugate variables of opposite ghost parities is rigid in the sense that it is completely determined by the action functional~$S$: one can not take~$\phi^\d_\alpha$'s as new, odd\/-\/parity `fields' and let the old unknowns~$\phi^\alpha$ be the conjugate `antifields' (although $\Pi\circ\Pi=\id$ and $\widehat{\widehat{P_i}}=P_i$ in finite dimension) because the trivial equations of motion $\delta S/\delta\phi^\d\equiv0$ 
do not constrain the physical fields. 
\end{remark}

\begin{remark}
A division of the BV-\/coordinates $\bq\leftrightarrow\bq^\d$ by using the \emph{vector}\footnote{Note that the \emph{vector} bundle~$\pi$ is used to introduce the physical fields~$\phi^1$,\ $\ldots$,\ $\phi^m$ yet these objects could be, for example, components of a \emph{covector} $A=\sum_{\alpha=1}^n \phi^\alpha\,\Id x^\alpha$ of a gauge connection's one\/-\/form in a principal fibre bundle over~$M^n$ with a given structure Lie group (see Example~\ref{ex:YM} on page~\pageref{ex:YM}).}
bundles~$\pi_0$,\ $\ldots$,\ $\pi_\lambda$ 
leads to the understanding of their variations $\langle\delta\bq|$ and~$|\delta\bq^\d\rangle$ as covectors and odd\/-\/parity vectors, respectively (see~\cite[\S37]{MMKM}). 
At the same time, the following heuristic argument motivates a choice --in what follows, irrelevant-- of an upper or lower location of the index that enumerates fibre components in the bundle~$\pi_i$ or~$\Pi\widehat{\pi}_i$ at fixed~$i$. 
Namely, we remember that $\phi^\alpha$~is a fibre coordinate in the initial vector bundle~$\pi$, whence the odd\/-\/parity antifields~$\phi^\d_\beta$ along the fibre of~$\Pi\wh\pi$ can be viewed as covectors. Therefore, the variation $\langle\delta\phi^\alpha|$ is indeed a covector and $|\delta\phi^\d_\beta\rangle$ automatically becomes a vector. 
Next, let us recall that the Euler\/--\/Lagrange equations of motion $\cE=\{\delta S=\mathcal{E}_\alpha\,\delta\phi^\alpha=0\}\in \wh{\varkappa(\pi)}$
are variational covectors by construction.
The first generation of linear Noether's relations $\Phi=\bigl\{\Phi^\mu\bigl(x,[\phi^\alpha],[\mathcal{E}_\beta]\bigr)=0\bigr\}\in P_1$ between the equations of motion $\mathcal{E} \in P_0$ is determined by linear differential operators in total derivatives acting on the components of the variational covector 
$\mathcal{E}=\delta\mathcal{S}$. Consequently, the antighosts $\gamma^{\d,a}$ are even vectors and their parity\/-\/reversed duals~$\gamma_a$ 
are odd covectors, so that
$\langle\delta\gamma^{\d,\alpha},\delta\gamma_\beta\rangle=+1\cdot\delta^\alpha_\beta$. It is similar for the second generation of ghost\/-\/antighost pairs 
$c^\alpha 
\leftrightarrow c^\d_\beta$, 
etc.\ (see~\cite[Ch.\,2 and~11]{ArthemyBook}).

We finally notice that the fact that an object may be a variational (co)vector relative to the initial vector bundle~$\pi$ of physical fields is not necessarily correlated with the ghost parity.
\end{remark}

We now describe the space of BV-\/functionals. First, we denote by~$\overline{H}^n(\pi_\BV)$ the space of equivalence classes (modulo the image of horizontal differential~$\overline{\Id}$) of top\/-\/degree horizontal forms on~$\overline{J^\infty_\bpi}(\Pi\widehat{\bpi})$; in other words, elements $\cH\in\overline{H}^n(\pi_\BV)$ are integral functionals possibly depending on the entire set of BV-\/variables~$\bq$ and~$\bq^\d$ and their derivatives~$\bq_\sigma$,\ $\bq^\d_\tau$ of arbitrarily high but finite orders (by convention, $\bq_\varnothing\equiv\bq$, $\bq^\d_\varnothing\equiv\bq^\d$ and $0\leqslant|\sigma|,|\tau|<\infty$). The full BV-\/action $S_\BV=S+\ldots$ for a given gauge\/-\/invariant model is a standard example of such integral functional.

Every equivalence class $\cH = \int h\bigl(x,[\bq],[\bq^\d]\bigr)\,\dvol\in\ol{H}^n(\pi_\BV)$ of the highest horizontal cohomology group for the vector bundle $\pi_\BV$ 
determines a map $\cH\colon\Gamma(\pi_\BV)\to\Bbbk$ (which we let be either~$\mathbb{R}$ or, possibly,~$\mathbb{C}$). Namely, for any $s\in\Gamma(\pi_\BV)$ we set
\begin{equation}\label{eq:BB}
\cH \colon s \mapsto \cH(s) = \int_M \jet(s)^* h\bigl(x,[\bq],[\bq^\d]\bigr)\,\dvol
\in \Bbbk.
\end{equation}
The BV-\/action~$S_\BV$ is a typical mapping that takes sections to numbers. 
We employ such integral functions from $\ol{H}^n(\pi_\BV)$ as building blocks in the construction of a larger set of `admissible' functionals which also map 
$\Gamma(\pi_\BV)$~to the field~$\Bbbk$. In agreement with the ideology of~\cite{topical,redbook}, let us view sections~$s \in \Gamma(\pi_\BV)$ as ``points'' and cohomology classes from~$\ol{H}^n(\pi_\BV)$ as particular examples of $\Bbbk$-\/valued ``functions'' defined at every ``point'' of the space of 
sections.\footnote{Note that all highest horizontal cohomology group elements are such ``functions'', but not all ``functions'' are integral functionals.} Next, we introduce the multiplicative structure $F\mathbin{{\otimes}_\Bbbk} G \mapsto F\cdot G$ pointwise by the rule
\[
(F\cdot G)(s)\stackrel{\text{def}}{{}={}}F(s)\stackrel{\Bbbk}{{}\cdot{}}G(s), \quad s\in\Gamma(\pi_\BV),
\]
for two integral functionals $F, G \in \ol{H}^n(\pi_\BV)$.

By taking (formal) sums of products of arbitrary finite numbers of integral functionals from $\ol{H}^n(\pi_\BV)$, i.e., by viewing such functionals as building blocks, we generate the linear subspace,
\begin{align}\label{eq:functionals}
\ol{\mathfrak{M}}^n(\pi_\BV) = \bigoplus_{i=1}^{+\infty}\mathop{{\bigotimes}_{\lefteqn{\Bbbk}}}\nolimits^i \, \ol{H}^n(\pi_\BV) \subseteq 
\mathfrak{M}^n(\pi_\BV) = \text{Map}_\Bbbk\bigl(\Gamma(\pi_\BV)\to\Bbbk\bigr),
\end{align}
in the space $\mathfrak{M}^n(\pi_\BV)$ of \emph{all} maps which assign a number to every section. We do not claim that the subspace $\ol{\mathfrak{M}}^n(\pi_\BV)$ of local functionals exhausts the entire set of existing mappings; we emphasize that in what follows we describe the construction of the Schouten bracket and 
BV-\/Laplacian on that linear subspace $\ol{\mathfrak{M}}^n(\pi_\BV) \subseteq \mathfrak{M}^n(\pi_\BV)$ (that is, not just on the set of equivalence classes 
$\ol{H}^n(\pi_\BV)$, still not for arbitrary maps in~$\mathfrak{M}^n(\pi_\BV)$). The reason why we do so is that integration by parts makes sense for each constituent term from~$\ol{H}^n(\pi_\BV)$ of a composite functional 
from~$\ol{\mathfrak{M}}^n(\pi_\BV)$.

\begin{remark}\label{remark:pi-products}
By constructing the space of maps $\ol{\mathfrak{M}}^n(\pi_\BV)$ in this fashion, we have not completely exited the realm of integral functionals on jet spaces. To see this, take two integral functionals $F = \int f(x,[\bq],[\bq^\d])\,\dif^nx$ and $G = \int g(x',[\bq'],[\bq'^\d])\,\dif^nx'$ on $\pi_\BV$. Then we may write $F\cdot G$ as $F\cdot G = \int  f(x,[\bq],[\bq^\d])\,g(x',[\bq'],[\bq'^\d])\,\dif^nx\wedge\dif^nx'$, which shows that $F\cdot G \in \ol{H}^{2n}(\pi_\BV\times\pi_\BV)$, which is the top level cohomology of the jet space of the product bundle $\pi_\BV\times\pi_\BV$. Thus, $F\cdot G$ is an integral functional with respect to the bundle $\pi_\BV\times\pi_\BV$, and we see that the second term $\ol{H}^n(\pi_\BV)\otimes_\Bbbk\ol{H}^n(\pi_\BV)$ in the direct sum in~\eqref{eq:functionals} is a subspace of $\ol{H}^{2n}(\pi_\BV\times\pi_\BV)$. Continuing this line of reasoning to products of any number of factors we find
\[
\ol{\mathfrak{M}}^n(\pi_\BV) \subseteq \bigoplus_{i=1}^{+\infty} \ol{H}^{i\cdot n}(\underbrace{\pi_\BV\times\cdots\times\pi_\BV}_\text{$i$ copies}).
\]
(Note that this is a strict 
inclusion; for example, $\ol{H}^{2n}(\pi_\BV\times\pi_\BV)$ contains functionals such as $\int q_{xx'}\,\dif^nx\wedge\dif^nx'$ while $\ol{H}^n(\pi_\BV)\otimes_\Bbbk\ol{H}^n(\pi_\BV)$ does not.) Thus, although composite but homogeneous functionals from $\ol{\mathfrak{M}}^n(\pi_\BV)$ (i.e., functionals of the form $F = F_1\cdots F_i$ for some $i$ and $F_k \in \ol{H}^n(\pi)$ for $1\leq k \leq i$) are not integral with respect to the bundle $\pi_\BV$, they \emph{are} integral with respect to a bundle, namely $\underbrace{\pi_\BV\times\cdots\times\pi_\BV}_\text{$i$ copies}$.
\end{remark}

\section{The Schouten bracket}\label{SecSchouten}
Consider first the space $\ol{H}^n(\pi_\BV)$ of integral functionals, that is, equivalence classes of highest\/-\/degree horizontal forms modulo exact forms in the image of $\hdif = \pi_\BV^*(\dif_{\mathrm{dR}(M)})$. We are dealing with a geometry in which densities of integral functionals can depend on the 
(anti)\/fields and all the available generations of (anti)\/ghosts, and their derivatives. Let us remember that the ``anti\/-\/objects'' $q^\d_\alpha$~are introduced as the 
duals of the BV-\/vector coordinates~$q^\beta$ so that 
$\langle \delta q^\alpha, \delta q^\d_\beta\rangle = +1\cdot\delta^\alpha_\beta$
and 
$\langle\delta q^\d_\beta,\delta q^\alpha\rangle = -1\cdot\delta^\alpha_\beta$ are the $\Bbbk$-\/valued couplings of the dual bases\footnote{At this point, the appearance of the minus sign -- whenever the variations are swapped -- is logical because the Cartan differentials must anticommute under the wedge product. On the other hand, this sign change will re-appear in sections~\ref{sec:conventional laplacian}--\ref{SecMaster}, taking the shape of a choice of the dual bases in fibres of the bundle $\bpi\mathbin{{\times}_M}\Pi\wh\bpi$: in $\Pi(\wh{\Pi\wh\bpi})$, the basis of dual vectors to a frame in $\Pi\wh\bpi$ is \emph{minus} the initially taken basis in a fibre of $\bpi$.} (obviously, $\delta^\alpha_\beta$ is the Kronecker symbol, not a variation).

To explain the minus sign in the second coupling,
let us recall that 
the variations $\delta(\cdot)$ of jet variables are 
shorthand notation for a projection of the vertical Cartan differential, which acts~--along the fibres of~$\pi_\infty$~-- onto the horizontal cohomology with respect to the horizontal differential~$\overline{\Id}$.
Consequently, the geometry is $\mathbb{Z}_2$-\/graded by the parity\/-\/reversion operator~$\Pi$, which is introduced by hands to produce Manin's odd neighbours (c.f.~\cite{Voronov} and~\cite{ArthemyPrague,ArthemyAlgebroids}), and it is also $\mathbb{Z}$-\/graded by the degree of Cartan forms (this degree equals zero for the action~$S\in\ol{H}^n(\pi)$, BV-\/action~$S_\BV \in \ol{H}^n(\pi_\BV)$, and generally, for all elements of~$\ol{H}^n(\pi_\BV)$ -- but equals one for the variation~$\delta S_\BV$, etc.). On top of that, the initial 
bundle~$\pi$ itself can be graded by a suitable ring (usually, it is~$\mathbb{Z}_2$ again so that $\pi=\bigl(\pi^{\overline{0}}|\pi^{\overline{1}}\bigr)$ and these parities of the fibre variables in~$\pi$ are~$\varepsilon_\alpha \in \mathbb{Z}_2$). For the sake of clarity we assume onwards that the bundle $\pi$ of physical fields is purely even; nevertheless, the variation symbols $\delta$ anticommute between themselves, independently from the ghost\/-\/parity~$\Pi$ that distinguishes 
between $q^\alpha$ and $q^\d_\beta$. We let $\delta q^\alpha \wedge \delta q^\d_\beta$ be the odd\/-\/parity symplectic form on $\Jet(\bpi)\mathbin{{\times}_M}\Jet(\Pi\wh\bpi)$.

\begin{definition}\label{def:Schouten}
Let $F, G \in \ol{H}^n(\pi_\BV)$ be two integral functionals depending, generally speaking, on the entire collection of BV-variables and their derivatives along the base $M$ of arbitrary high but finite order. The \term{variational Schouten bracket} of the building blocks $F$ and $G$ is
\begin{equation}\label{EqDefSchouten}
\schouten{F,G} = \langle\overrightarrow{\delta F}\wedge\overleftarrow{\delta G}\rangle,
\end{equation}
where the arrows indicate the direction in which, with due attention to the 
$\mathbb{Z}_2$-\/parity brought in by~$\Pi$, the variations of 
BV-\/variables~$\bq$ and~$\bq^\d$ are transported. After this transportation they are coupled, as indicated by the angular brackets $\langle\,\cdot\,\rangle$.
\end{definition}

\begin{remark}
This construction of the odd Poisson bracket $\schouten{\,,} \colon \ol{H}^n(\pi_\BV) \times \ol{H}^n(\pi_\BV) \to \ol{H}^n(\pi_\BV)$ is standard 
(see~\cite[Ch.\,8]{ArthemyBook} for its extension to a noncommutative setup of cyclic\/-\/invariant words); let us now focus on the mechanism of the ordered coupling between the initially separate geometries for~$F$ and~$G$. Namely, as was explained in Remark~\ref{remark:pi-products}, each of these two factors is defined over its own copy of the bundle $\hJet\bpi(\Pi\wh\bpi) \to \Jet(\bpi) \to M$ with all canonically conjugate pairs of BV-\/variables~$\bq$ and~$\bq^\d$ and their derivatives~$\bq_\sigma$ and~$\bq^\d_\tau$ for coordinates along the fibres over points of the base manifold~$M$. In each of the two copies we vary, that is, we take the vertical differential~$\Id_{\cC}$ and integrate by parts.\footnote{The value of the Schouten bracket $\schouten{F,G}$ does not depend on the choice of densities in both factors because the variations of exact forms are equal to zero by our earlier convention of the absence of boundary terms.} Since $F$ and $G$ are defined over separate copies of $\pi_\BV$, the integration by parts on the one does not feel the other:
\begin{align*}
   \lshad&F,G\rshad
     = \langle\overrightarrow{\delta F}\wedge\overleftarrow{\delta G}\rangle 
     = \left\langle
         \int\overrightarrow{\dif_\Ct}\big(f(x,[\bq],[\bq^\d])\big)\,\dif^nx \wedge
         \int\overleftarrow{\dif_\Ct}\big(g(y,[\bq'],[\bq'^\d])\big)\,\dif^ny
      \right\rangle \\
   &= \left\langle \iint
         \left.\left(\frac{\overrightarrow{\delta f}}{\delta q^\alpha}\,\delta q^\alpha + \frac{\overrightarrow{\delta f}}{\delta q^\d_\alpha}\,\delta q^\d_\alpha\right)\right|_{(x,[\bq],[\bq^\d])}
         \wedge
         \left.\left(\delta q^\alpha\,\frac{\overleftarrow{\delta g}}{\delta q^\alpha} + \delta q^\d_\alpha\,\frac{\overleftarrow{\delta g}}{\delta q^\d_\alpha}\right)\right|_{(y,[\bq'],[\bq'^\d])}
         \dif^nx\,\dif^ny
      \right\rangle.
\end{align*}
   
Now begins the merging of the two bundles: we first identify their bases~$M$, then the fibres of~$\pi$, and finally we identify 
both copies of 
the BV-\/bundle~$\pi_\BV$.
It is the coupling $\langle\,,\rangle$ of covectors and vectors which provides the identification of points in the mechanism of merging. Indeed,
the pairs of coupling\/-\/dual Cartan differentials from $\overrightarrow{\delta F}$ and $\overleftarrow{\delta G}$, whenever attached to \emph{different} points of the space $\hJet\bpi(\Pi\wh\bpi)$, couple to zero, whereas the same coupling yields $\pm 1 \in \Bbbk$ if these points coincide (this is identical to the coupling $T_y^*N\times T_x N\to\mathbb{R}$ for any smooth manifold~$N$). 
   
This scenario can be recognized also as Dirac's $\delta$-\/distribution $\delta(x-y)$, in which $y$~runs over the base~$M$ and which integrates away\footnote{The convention that the Schouten bracket of two integral functionals remains an integral functional determines a natural choice of the $\Bbbk$-valued coupling $\langle\,,\rangle$ by using the variables~$\bq^\d$ rather than 
a 
choice of a $\ol\Lambda^n(\pi_\BV)$-valued coupling by using the 
metric\/-\/dependent Hodge structure~$*$ on~$M$ and the conjugate variables~$\bq^*$, see, e.g.,~\cite{CF}.} the $n$~differentials $\Id^n y$ in one of the two integral functionals, $F=\int f\,\dvol$ and $G=\int g\,\dif^n y$. Hence we conclude that the contributions from~$F$ and~$G$ to their bracket $\schouten{F,G}$ are evaluated at the (infinite jet~$\jet(s)$ of the) same section~$s\in\Gamma(\pi_\BV)$.
\end{remark}

\begin{corollary}\label{thm:Schouten}
In coordinates, the Schouten bracket $\schouten{F,G}$ of two functionals $F = \int f\,\dvol$,\ $G = \int g\,\dif^ny \in \ol{H}^n(\pi_\BV)$ is the integral functional given by the formula
\[
\schouten{F,G} = \int\left(\rdifv[f]{q^\alpha}\cdot\ldifv[g]{q^\d_\alpha} - \rdifv[f]{q^\d_\alpha}\cdot\ldifv[g]{q^\alpha}\right)\,\dvol,
\]
where the arguments of all the differential functions coincide and are equal 
to $\theta^\infty=\bigl(x,[\bq],[\bq^\d]\bigr)\in \hJet\bpi(\Pi\wh\bpi)$.
\end{corollary}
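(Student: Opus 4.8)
The plan is to read off the formula by a direct computation that continues the expansion of $\schouten{F,G}=\langle\overrightarrow{\delta F}\wedge\overleftarrow{\delta G}\rangle$ already begun in the Remark preceding this statement. First I would perform the integration by parts separately in each of the two copies of the BV-\/bundle $\hJet\bpi(\Pi\wh\bpi)$: writing $\overrightarrow{\delta F}=\int\overrightarrow{\dif_\Ct}\big(f(x,[\bq],[\bq^\d])\big)\,\dvol$ and $\overleftarrow{\delta G}=\int\overleftarrow{\dif_\Ct}\big(g(y,[\bq'],[\bq'^\d])\big)\,\dif^ny$, and moving all total derivatives off the Cartan differentials $\delta q^\alpha$, $\delta q^\d_\alpha$, one trades $\difp{}{q^\alpha}$, $\difp{}{q^\d_\alpha}$ for the variational derivatives, so that $\overrightarrow{\dif_\Ct}(f)=\rdifv[f]{q^\alpha}\,\delta q^\alpha+\rdifv[f]{q^\d_\alpha}\,\delta q^\d_\alpha$ and $\overleftarrow{\dif_\Ct}(g)=\delta q^\beta\,\ldifv[g]{q^\beta}+\delta q^\d_\beta\,\ldifv[g]{q^\d_\beta}$ modulo $\overline{\dif}$-\/exact terms; the arrows record on which side of the coefficient the freed differential sits. (This is legitimate because, by the standing convention that integration by parts produces no boundary terms, the variation of an $\overline{\dif}$-\/exact density vanishes, so the answer does not depend on the chosen densities $f$, $g$.)

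Next I would expand the wedge product of these two Cartan\/-\/degree\/-\/one forms into its four terms and apply the $\Bbbk$-\/valued coupling $\langle\,,\rangle$ to each. The two ``diagonal'' terms drop out because $\langle\delta q^\alpha,\delta q^\beta\rangle=0=\langle\delta q^\d_\alpha,\delta q^\d_\beta\rangle$, while the mixed terms survive through $\langle\delta q^\alpha,\delta q^\d_\beta\rangle=+\delta^\alpha_\beta$ and $\langle\delta q^\d_\alpha,\delta q^\beta\rangle=-\delta^\beta_\alpha$; the latter relation is the source of the relative minus sign between the two summands in the asserted formula. At the same step I would invoke the merging mechanism described in the preceding Remark: the coupling of coupling\/-\/dual Cartan differentials attached at two different points of $\hJet\bpi(\Pi\wh\bpi)$ vanishes and equals $\pm1$ precisely when the points coincide, so it behaves as $\delta(x-y)$ and integrates away the differentials $\dif^ny$ of the second factor while identifying $y$ with $x$. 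Collecting the two surviving contributions then yields
\[
\schouten{F,G}=\int\left(\rdifv[f]{q^\alpha}\cdot\ldifv[g]{q^\d_\alpha}-\rdifv[f]{q^\d_\alpha}\cdot\ldifv[g]{q^\alpha}\right)\,\dvol,
\]
with all differential functions evaluated at the single jet point $\theta^\infty=(x,[\bq],[\bq^\d])$, as claimed.

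The one place where care is genuinely required~-- and hence the step I would write out in detail~-- is the sign bookkeeping: the interplay of the signs picked up when freeing total derivatives in the integration by parts, the anticommutativity of the Cartan differentials $\delta$, and the ghost $\BBZ_2$-\/parity carried by $\Pi$ that distinguishes $q^\alpha$ from $q^\d_\beta$. The left/right arrow notation on $\delta F$, $\delta G$ and on the variational derivatives is designed exactly to make this bookkeeping mechanical, so I would keep the arrows throughout and only verify explicitly that the term $\rdifv[f]{q^\d_\alpha}\,\delta q^\d_\alpha\wedge\delta q^\beta\,\ldifv[g]{q^\beta}$ couples to $-\rdifv[f]{q^\d_\alpha}\cdot\ldifv[g]{q^\alpha}$, which pins down the minus sign in front of the second summand. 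Everything else is routine.
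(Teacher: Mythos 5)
Your proposal is correct and follows essentially the same route as the paper, which obtains this corollary directly from the expansion in the preceding Remark: variation and integration by parts in each separate copy of the BV-bundle, survival of only the mixed couplings $\langle\delta q^\alpha,\delta q^\d_\beta\rangle=+\delta^\alpha_\beta$ and $\langle\delta q^\d_\alpha,\delta q^\beta\rangle=-\delta^\beta_\alpha$, and the coupling-as-$\delta(x-y)$ mechanism that merges the two base points and removes $\dif^ny$. Your sign bookkeeping, including the origin of the relative minus sign in the second summand, matches the paper's conventions.
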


Finally, we extend the Schouten bracket from the space $\ol{H}^n(\pi_\BV)$ of building blocks~\eqref{eq:BB} to the linear subspace $\ol{\mathfrak{M}}^n(\pi_\BV)$ generated in~$\mathfrak{M}^n(\pi_\BV)$ by using elements from~$\ol{H}^n(\pi_\BV)$ under the multiplication in $\Bbbk$ for their values at sections $s \in \Gamma(\pi_\BV)$. We take the following theorem as the recursive definition.

\begin{theorem}\label{thm:LeibnizSchouten}
The geometric construction of $\schouten{\,,}$ in Definition~\textup{\ref{def:Schouten}} extends to the \emph{antibracket} $\schouten{\,,}$ on $\ol{\mathfrak{M}}^n(\pi_\BV)$ such that the Leibniz rule
\begin{align*}
\schouten{F,G\cdot H}(s) &= 
\big(\schouten{F,G}\cdot H\big)(s) + (-)^{(\gh(F)-1)\cdot\gh(G)}
 \big(G\cdot\schouten{F,H}\big)(s) \nonumber\\
&= \schouten{F,G}(s)\cdot H(s) + (-)^{(\gh(F)-1)\cdot\gh(G)}
 G(s)\cdot\schouten{F,H}(s)
\end{align*}
holds for any $F,G,H\in\ol{\mathfrak{M}}^n(\pi_\BV)$ and all $s\in\Gamma(\pi_\BV)$. The antibracket~$\schouten{\,,}$ on~$\ol{\mathfrak{M}}^n(\pi_\BV)$ is bi\/-\/linear over $\Bbbk$,
shifted\/-\/graded skew\/-\/symmetric,
\[
\schouten{F, G} = -(-)^{(\gh(F)-1)(\gh(G)-1)}\schouten{G,F},
\]
 and satisfies the 
shifted\/-\/graded Jacobi identity\textup{:}~\footnote{For a proof of these last three standard properties (namely, bi-linearity, etc.) of the variational Schouten bracket in the (non)\/commutative setup see, e.g.,~\cite[Ch.\,8]{ArthemyBook}, \cite{Lorentz12} and~\cite{protaras}.}
\begin{multline}
   0 = (-)^{(\gh(F)-1)(\gh(H)-1)}\bschouten{F,\schouten{G,H}}
    + (-)^{(\gh(F)-1)(\gh(G)-1)}\bschouten{G,\schouten{H,F}} \\
    + (-)^{(\gh(G)-1)(\gh(H)-1)}\bschouten{H,\schouten{F,G}}. \label{EqJacobiSchouten}
\end{multline}
\end{theorem}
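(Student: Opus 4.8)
The plan is to replace the recursion in the statement by a single explicit formula on products of building blocks, to check that this formula is consistent (equivalently, that it obeys the stated Leibniz rule), and then to let shifted\/-\/graded skew\/-\/symmetry and the Jacobi identity descend from $\ol{H}^n(\pi_\BV)$ --~where, as noted in the footnote, they are classical~-- to all of $\ol{\mathfrak{M}}^n(\pi_\BV)$ by induction on the number of tensor factors in~\eqref{eq:functionals}.

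By $\Bbbk$-\/bilinearity it suffices to treat homogeneous functionals $F=F_1\cdots F_p$, $G=G_1\cdots G_q$ with $F_i,G_j\in\ol{H}^n(\pi_\BV)$. Unwinding the Leibniz rule in the second argument, and using the classical skew\/-\/symmetry on building blocks to get the matching Leibniz rule in the first, one is forced to set
\[
\schouten{F_1\cdots F_p,\,G_1\cdots G_q}(s)=\sum_{i=1}^{p}\sum_{j=1}^{q}\epsilon_{ij}\,\Bigl(\prod_{k\ne i}F_k\Bigr)(s)\cdot\Bigl(\prod_{l\ne j}G_l\Bigr)(s)\cdot\schouten{F_i,G_j}(s),
\]
where $\schouten{F_i,G_j}\in\ol{H}^n(\pi_\BV)$ is the bracket of Definition~\ref{def:Schouten} (so that Corollary~\ref{thm:Schouten} gives it in coordinates), all factors are evaluated at the same section $s$, and $\epsilon_{ij}\in\{\pm1\}$ is the Koszul sign picked up when $F_i$ and $G_j$ are extracted and the remaining factors are brought into the displayed order, bookkept through the ghost numbers $\gh(\cdot)$ and the odd degree $-1$ carried by the bracket. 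As written, this is manifestly a $\Bbbk$-\/bilinear map into $\ol{\mathfrak{M}}^n(\pi_\BV)$.

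The real work --~and the only delicate point~-- is to show that this $\epsilon_{ij}$ can be chosen consistently, i.e.\ that the formula satisfies the Leibniz rule of the statement (so that it coincides with any recursive evaluation and is compatible with the associative, graded\/-\/commutative product on $\ol{\mathfrak{M}}^n(\pi_\BV)$). I would do this by induction on $p+q$; the inductive step amounts to checking that the two ways of expanding $\schouten{F,\,(G'\cdot G'')\cdot G'''}$ by Leibniz agree, which, after the terms where $\schouten{\,,}$ hits a single building block cancel, comes down to a purely combinatorial identity among the $\epsilon_{ij}$. The one thing that must be gotten right is the interplay of the $-1$ shift in the exponents with the identity $\gh(\schouten{F_i,G_j})=\gh(F_i)+\gh(G_j)-1$; once the sign conventions are fixed so that this works, the Leibniz rule holds by construction and $\schouten{\,,}$ is a biderivation of the product on $\ol{\mathfrak{M}}^n(\pi_\BV)$.

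What remains propagates by the familiar mechanism for Gerstenhaber\/-\/type algebras. Shifted\/-\/graded skew\/-\/symmetry follows by induction on the number of factors: the base case $F,G\in\ol{H}^n(\pi_\BV)$ is the cited classical fact, and the inductive step expands $\schouten{F,\,G\cdot H}$ and $\schouten{G\cdot H,\,F}$ by Leibniz and matches them term by term using the sign identities above. For the graded Jacobi identity~\eqref{EqJacobiSchouten}, let $J(F,G,H)$ be the alternating sum on its right\/-\/hand side; since $\schouten{\,,}$ is a graded skew\/-\/symmetric biderivation, the Leibniz\/-\/generated terms recombine (once more by skew\/-\/symmetry and the $\epsilon$-\/signs) to show that $J$ is a derivation in each of its three arguments. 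A triderivation that vanishes on a generating set vanishes identically, and $J$ vanishes on $\ol{H}^n(\pi_\BV)$ by the classical Jacobi identity there; hence $J\equiv0$ on $\ol{\mathfrak{M}}^n(\pi_\BV)$. In sum, the conceptual content is light --~a biderivation bracket is determined by, and inherits its Lie structure from, its values on algebra generators~-- and the only genuine labour is the Koszul\/-\/sign bookkeeping needed to pin down $\epsilon_{ij}$ and to carry the inductions through.
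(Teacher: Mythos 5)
Your proposal is essentially correct, but it goes by a different route than the paper. You treat the extension purely algebraically: since $\ol{\mathfrak{M}}^n(\pi_\BV)$ is generated by the building blocks of $\ol{H}^n(\pi_\BV)$, you write down the closed Koszul\/-\/signed biderivation formula on products, check its internal consistency by induction on the number of factors, and then let skew\/-\/symmetry and the Jacobi identity propagate from the generators (where they are classical) via the ``a (bi/tri)derivation vanishing on generators vanishes identically'' mechanism -- in particular your derivation of \eqref{EqJacobiSchouten} from the Jacobiator\/-\/is\/-\/a\/-\/triderivation argument is more explicit than the paper, which simply cites the literature for skew\/-\/symmetry and Jacobi. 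The paper instead proves the Leibniz rule geometrically: following Remark~\ref{remark:pi-products}, the composite functional $G\cdot H$ is regarded as a genuine integral functional over the product bundle $\pi_\BV\times\pi_\BV\times\cdots\times\pi_\BV$, the Cartan differential splits as a sum of the differentials on the individual copies, integration by parts in one factor does not touch the others, and the same ordered coupling of Definition~\ref{def:Schouten} then produces $\schouten{F,G\cdot H}=\schouten{F,G}\cdot H+(-)^{(\gh(F)-1)\gh(G)}G\cdot\schouten{F,H}$ after transporting the variations and tracking the two swaps (this is also where the paper, like you, invokes skew\/-\/symmetry to reduce to $F$ integral and proceeds inductively over the factors of $H$). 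The trade\/-\/off: your argument is more elementary and works for any graded\/-\/commutative algebra with a bracket on generators, at the price of leaving the $\epsilon_{ij}$ bookkeeping as the real labour and of not tying the extension back to the coupling mechanism of Definition~\ref{def:Schouten}; the paper's computation establishes exactly that tie -- the Leibniz rule is exhibited as a consequence of the same merging\/-\/of\/-\/bases coupling, not as an imposed recursion -- which is what the later functional\/-\/analytic sections (e.g.\ Theorem~\ref{thm:LeibnizLaplacian} and Example~\ref{ExCounter2}) lean on.
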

\begin{proof}
For simplicity, we assume that $F \in \ol{H}^n(\pi_\BV)$ is an integral functional. Since the bracket is shifted-graded skew-symmetric, any reasoning that applies to the right slot of the bracket will also hold for the left slot, so no generality is lost. We also assume, again without loss of generality, that $G$ is also an integral functional, and that $H$ is composite but homogeneous (i.e. $H = H_1\cdots H_{i}$, where the factors are elements of $\ol{H}^n(\pi_\BV)$). By realizing that $H = H_1\cdot H_2 \cdots H_i$ is also a product of one integral functional with several others, one may continue the following process inductively to reduce the bracket of $F$ and $G\cdot H$ to an expression which contains only brackets with integral functionals on $\pi_\BV$ as arguments.

Following Remark~\ref{remark:pi-products}, we consider $G\cdot H$ to be an integral functional with respect to the product bundle
\[
\pi_\BV\times\underbrace{\pi_\BV\times\ldots\times\pi_\BV}_{\text{$i$ copies}}=
\stackrel{(0)}{\pi_\BV'}\times\stackrel{(1)}{\pi_\BV''}\times\ldots\times\stackrel{(i)}{\pi_\BV''},
\]
of~$1+i$ copies of the BV-\/bundle~$\pi_\BV$. The total space of this product of bundles is endowed with the Cartan differential
\[
\Id_\cC=\stackrel{(0)}{\Id_\cC'}+\stackrel{(1)}{\Id_\cC''}+\ldots+\stackrel{(i)}{\Id_\cC''},
\]
where the superscripts indicate from which copy of $\pi_\BV$ the operator comes. Note that the integration by parts, whenever one transforms~$\stackrel{(a)}{\Id_\cC'}\mapsto\stackrel{(a)}{\delta'}$ or~$\stackrel{(a)}{\Id_\cC''}\mapsto\stackrel{(a)}{\delta''}$ inside the $a$th building block~$\overline{H}^n(\pi)$ within~$G$ or~$H$, does not produce any effect on the other $i$ factors because their geometries are entirely independent from the geometry of~$\stackrel{(a)}{\pi_\BV}$. Consequently, all total derivatives from~$\stackrel{(a)}{\pi_\BV}$ falling outside~$\overline{H}^n\bigl(\stackrel{(a)}{\pi_\BV}\bigr)$ evaluate to zero, whence
\[
\overleftarrow{\delta}(H)=\sum_{a=1}^{i'}H_1\cdot\ldots\cdot\overleftarrow{\delta}(H_a)\cdot\ldots\cdot H_{i},
\]
where the resulting variations $\delta q^\alpha$ and $\delta q^\d_\alpha$ are to be transported all the way to the left through $H_1\cdots H_{a-1}$, as indicated by the arrow on top of the Euler operator $\delta$. Summarizing, we have
\[
\overleftarrow{\delta (G \cdot H)} = \overleftarrow{\delta' G}\cdot H + G\cdot\overleftarrow{\delta'' H}.
\]
Then the Schouten bracket is given by equation~\eqref{EqDefSchouten} (suppressing the suffix $(s)$ which indicates that everything is evaluated on a section $s$):
\begin{align*}
   \schouten{F, G \cdot H} 
   &= \langle\overrightarrow{\delta F}\wedge\overleftarrow{\delta (G \cdot H)}\rangle
    = \langle\overrightarrow{\delta F}\wedge \overleftarrow{\delta' G}\cdot H\rangle
        + \langle\overrightarrow{\delta F}\wedge G\cdot\overleftarrow{\delta'' H}\rangle
\intertext{In the right term, the differentials have to be transported to the left before the coupling can happen. In order to achieve this we swap $G$ with $\overleftarrow{\delta'' H}$, obtaining a minus sign $(-)^{\gh(G)\gh(H)}$:}
   &= \langle\overrightarrow{\delta F}\wedge \overleftarrow{\delta' G}\rangle\cdot H
        + (-)^{\gh(G)\gh(H)}\langle\overrightarrow{\delta F}\wedge \overleftarrow{\delta'' H}\rangle\cdot G
\intertext{Swapping the factors in the right term, we obtain:}
   &= \langle\overrightarrow{\delta F}\wedge \overleftarrow{\delta' G}\rangle\cdot H
        + (-)^{\gh(G)\gh(H)}(-)^{(\gh(F)+\gh(H)-1)\gh(G)} G\cdot\langle\overrightarrow{\delta F}\wedge \overleftarrow{\delta'' H}\rangle \\
   &= \langle\overrightarrow{\delta F}\wedge \overleftarrow{\delta' G}\rangle\cdot H
        + (-)^{(\gh(F)-1)\gh(G)} G\cdot\langle\overrightarrow{\delta F}\wedge \overleftarrow{\delta'' H}\rangle \\
   &= \schouten{F,G}\cdot H + (-)^{(\gh(F)-1)\gh(G)}G\cdot\schouten{F,H}.
\end{align*}
This proves the claim.
\end{proof}

\section{The 
BV-\/Laplacian}\label{sec:BV}
The BV-\/Laplacian~$\lap_\BV$ is a specific linear operator on the linear subspace $\ol{\mathfrak{M}}^n(\pi_\BV) \subseteq \mathfrak{M}^n(\pi_\BV)$ of those functionals $\cH\colon\Gamma(\pi_\BV)\to\Bbbk$ which are assembled by using (sums of products of) the integral functionals from $\ol{H}^n(\pi_\BV)$. As in the previous section, we start with the definition of~$\lap_\BV$ on the space~$\ol{H}^n(\pi_\BV)$ of elementary building blocks.

\begin{definition}[provisional, c.f.~\S\ref{sec:conventional laplacian}]\label{def:Laplacian}
The \term{BV-\/Laplacian}~$\lap_\BV \colon \ol{H}^n(\pi_\BV) \to \ol{H}^n(\pi_\BV)$ on the space of integral functionals -- possibly, depending on the entire collection of BV-variables and on their derivatives up to arbitrarily high order -- is the linear mapping
\begin{equation}\label{eq:DefLaplacian}
\lap_\BV\colon \mathcal{H} \mapsto \tfrac12\langle\overleftarrow\delta *\overleftarrow\delta(\mathcal{H})\rangle, \quad \cH\in\ol{H}^n(\pi_\BV),
\end{equation}
where $\langle\:\rangle$~denotes the ordered $\Bbbk$-valued coupling $\langle\delta q^\alpha\wedge\delta q^\d_\beta\rangle = \delta^\alpha_\beta = -\langle\delta q^\alpha\wedge\delta q^\d_\beta\rangle$ of Cartan dif\-fe\-ren\-ti\-als for the pairs $q^\alpha\leftrightarrow q^\d_\alpha$ of canonically conjugate variables provided~by $\pi_\BV$, and where the operator~$*=(-)^{\gh(\cdot)-1}$ is 
such that $*(\delta q^\alpha)=-\delta q^\alpha$ and~$*(\delta q^\d_\beta) = \delta q^\d_\beta$.
\end{definition}

\begin{corollary}
In coordinates, the BV-\/Laplacian~$\lap_\BV$ acts on an integral functional 
$\mathcal{H} = \int h\bigl(x$,\ $[\bq]$,\ $[\bq^\d]\bigr)\,\dvol \in \ol{H}^n(\pi_\BV)$ by the formula
\begin{equation}\label{eq:laplacian-in-coordinates}
\lap_\BV(\mathcal{H}) = \int \left.\left(\ldifv{q^\alpha}\left(\ldifv[h]{q^\d_\alpha}\right)\right)\right|_{(x,[\bq],[\bq^\d])}\,\dvol,
\end{equation}
which thus yields an integral functional again\textup{:} 
$\lap_\BV(\mathcal{H}) \in \ol{H}^n(\pi_\BV)$.
\end{corollary}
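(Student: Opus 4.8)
The plan is to unfold Definition~\ref{def:Laplacian} in jet coordinates by carrying out the two variations one after another and then imposing the coupling $\langle\,\cdot\,\rangle$. First I would compute $\overleftarrow{\delta}(\mathcal{H})$: since $\overleftarrow{\delta}=\int\overleftarrow{\dif_\Ct}(\cdot\,)$ we have $\overleftarrow{\delta}(\mathcal{H})=\int\overleftarrow{\dif_\Ct}\bigl(h(x,[\bq],[\bq^\d])\bigr)\,\dvol$; expanding $\dif_\Ct h=\sum_\sigma(\dd h/\dd q^\alpha_\sigma)\,\delta q^\alpha_\sigma+\sum_\tau(\dd h/\dd q^\d_{\alpha,\tau})\,\delta q^\d_{\alpha,\tau}$ and integrating by parts --- legitimate by the paper's standing no\/-\/boundary\/-\/terms assumption --- collects the total derivatives $\dtotal\sigma$ onto the coefficients and gives
\[
\overleftarrow{\delta}(\mathcal{H})=\int\Bigl(\delta q^\alpha\cdot\ldifv[h]{q^\alpha}+\delta q^\d_\alpha\cdot\ldifv[h]{q^\d_\alpha}\Bigr)\,\dvol,
\]
exactly as in the Schouten\/-\/bracket computation recalled earlier. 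Applying $*$ then merely flips the sign of the first summand, since $*(\delta q^\alpha)=-\delta q^\alpha$ and $*(\delta q^\d_\alpha)=\delta q^\d_\alpha$.

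Next I would apply $\overleftarrow{\delta}$ a second time to this Cartan\/-\/degree\/-\/one functional. Only the coefficients $\ldifv[h]{q^\alpha}$ and $\ldifv[h]{q^\d_\alpha}$ --- which are honest differential functions on $\hJet\bpi(\Pi\wh\bpi)$ --- get varied, while the Cartan differentials $\delta q^\alpha,\delta q^\d_\alpha$ already present are carried along, the freshly produced variations being transported to the left past them with signs from the graded commutativity under $\wedge$. A second integration by parts inside the integral then yields a sum of terms of the shape $\delta q^a\wedge\delta q^b\cdot\ldifv{q^b}\!\bigl(\ldifv[h]{q^a}\bigr)$, with $a,b$ ranging over the full collection of $q$- and $q^\d$-labels, each again a bona fide density. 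Imposing $\langle\,\cdot\,\rangle$ annihilates the $\delta q\wedge\delta q$ and $\delta q^\d\wedge\delta q^\d$ blocks and evaluates $\langle\delta q^\alpha\wedge\delta q^\d_\beta\rangle=\delta^\alpha_\beta=-\langle\delta q^\d_\beta\wedge\delta q^\alpha\rangle$ on the mixed ones, so exactly two contributions survive: one from varying $\ldifv[h]{q^\alpha}$ in a $q^\d$-direction, one from varying $\ldifv[h]{q^\d_\alpha}$ in a $q$-direction.

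To close the argument I would show that these two survivors coincide inside the integral --- this is the graded symmetry of the mixed second variational derivative modulo $\im\hdif$, i.e.\ $\int\ldifv{q^\alpha}\!\bigl(\ldifv[h]{q^\d_\alpha}\bigr)\,\dvol=\pm\int\ldifv{q^\d_\alpha}\!\bigl(\ldifv[h]{q^\alpha}\bigr)\,\dvol$ --- and it is precisely this equality that absorbs the prefactor $\tfrac12$, while the residual signs from $*$ and from the antisymmetry of $\langle\,\cdot\,\rangle$ combine to reproduce the sign in~\eqref{eq:laplacian-in-coordinates}. The last assertion is then immediate: the integrand $\ldifv{q^\alpha}\!\bigl(\ldifv[h]{q^\d_\alpha}\bigr)$ is a differential function on $\hJet\bpi(\Pi\wh\bpi)$, so $\lap_\BV(\mathcal{H})=\int(\cdots)\,\dvol$ is again a class in $\ol{H}^n(\pi_\BV)$.

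The step I expect to be the main obstacle is the sign\/-\/ and ordering\/-\/bookkeeping in the last two paragraphs: tracking the direction in which the newly created Cartan differentials are transported past the ones already in place, the ghost\/-\/parity signs hidden inside the left variational derivatives, and the sign in the graded symmetry of the Hessian needed to merge the two surviving terms and cancel the $\tfrac12$. It is also worth making explicit here the point behind the paper's ``self\/-\/regularization'' claim, since it is exactly what keeps the computation honest: both variations originate from the single density $h$ at the single base point $x\in M$, so no second base variable is ever introduced and the coupling $\langle\,\cdot\,\rangle$ always lands on plainly conjugate pairs $q^\alpha\leftrightarrow q^\d_\alpha$, returning $\pm1\in\Bbbk$ --- never a $\delta(0)$ or an infinite constant. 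Everything else (the two integrations by parts, the coordinate expansion of $\dif_\Ct$) is routine.
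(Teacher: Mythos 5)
Your route is essentially the paper's own (unfold Definition~\ref{def:Laplacian}, vary twice, integrate by parts, then let the coupling act), but there is a genuine gap at your central step. After the first variation you correctly arrive at $\int\bigl(\delta q^\alpha\,\ldifv[h]{q^\alpha}+\delta q^\d_\alpha\,\ldifv[h]{q^\d_\alpha}\bigr)\,\dvol$. When you vary this once more and integrate by parts again, the total derivatives stripped off the freshly produced Cartan differentials unavoidably fall also on the Cartan differential already sitting in the integrand: the honest intermediate expression is a sum of terms $\delta q^\beta\,(-)^{|\tau|}D_\tau\bigl(\delta q^\d_\alpha\,\frac{\overrightarrow\partial}{\partial q^\beta_\tau}\ldifv[h]{q^\d_\alpha}\bigr)$, so it contains pairs $\delta q^\beta\wedge\delta q^\d_{\alpha,\sigma}$ with $|\sigma|>0$, not only the clean shape $\delta q^a\wedge\delta q^b\cdot\ldifv{q^b}\bigl(\ldifv[h]{q^a}\bigr)$ that you assert. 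What closes the paper's computation is an extra property of the coupling which you never invoke: $\couple{\dif_\Ct q^\beta_\tau,\dif_\Ct q^\d_{\alpha,\sigma}}=\pm\,\delta^\alpha_\beta\,\delta^\varnothing_{\sigma\cup\tau}$, i.e.\ it vanishes unless \emph{both} multi-indices are empty. This selection rule discards exactly the terms in which some derivatives landed on a Cartan differential and keeps those in which all of them landed on the coefficient, and it is these that reassemble into the iterated variational derivative of~\eqref{eq:laplacian-in-coordinates}. You use the coupling only to kill the $\delta q\wedge\delta q$ and $\delta q^\d\wedge\delta q^\d$ blocks and to evaluate underived mixed pairs, so the mechanism that actually produces the formula --- and which is the substance of the self-regularization beyond the congruence of base points you do discuss --- is missing, and your intermediate claim is false as stated.

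A smaller divergence concerns the factor $\tfrac12$. The paper removes it before any coordinate work: writing $\overleftarrow{\delta}=\overleftarrow{\delta_\bq}+\overleftarrow{\delta_{\bq^\d}}$, using $(\overleftarrow{\delta_\bq})^2=(\overleftarrow{\delta_{\bq^\d}})^2=0$ and their anticommutativity, the operator $\tfrac12\,\overleftarrow\delta *\overleftarrow\delta$ collapses to $\overleftarrow{\delta_\bq}\,\overleftarrow{\delta_{\bq^\d}}$, so only one mixed block ever appears. You instead keep both mixed blocks and merge them via a graded symmetry of the second variational derivative modulo $\im\hdif$. That identity is in fact correct --- under the integral each iterated variational derivative reduces, after discarding total-derivative terms, to $\int \partial^2 h/\partial q^\alpha\,\partial q^\d_\alpha\,\dvol$, and the underived partial derivatives graded-commute, an observation the paper makes only later (Proposition~\ref{thm:LaplacianDifferential}, Counterexample~\ref{counterexample}) --- but in your sketch it is left as an assertion with the sign undetermined, whereas the operator-level cancellation is shorter and sign-safe.
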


\begin{proof}
By the construction of BV-\/variables~$\bq$ and~$\bq^\d$ we have that~$\overleftarrow{\delta}=\overleftarrow{\delta_\bq}+\overleftarrow{\delta_{\bq^\d}}$. Note that the effect of the operator $*$ is to turn $\delta_\bq$ into $-\delta_\bq$, while it maps $\delta_{\bq^\d}$ to itself. Then
\begin{align*}
   \frac12\langle\overleftarrow\delta *\overleftarrow\delta(\mathcal{H})\rangle
    = \frac12\Big\langle(\overleftarrow{\delta_\bq} + \overleftarrow{\delta_{\bq^\d}})(-\overleftarrow{\delta_\bq}+\overleftarrow{\delta_{\bq^\d}})(\mathcal{H})\Big\rangle
    = \frac12\Big\langle(-\overleftarrow{\delta_\bq}^2 + \overleftarrow{\delta_\bq}\overleftarrow{\delta_{\bq^\d}} - \overleftarrow{\delta_{\bq^\d}}\overleftarrow{\delta_\bq} - \overleftarrow{\delta_{\bq^\d}}^2)(\mathcal{H})\Big\rangle
\end{align*}
Since $\delta_\bq$ and $\delta_{\bq^\d}$ are differentials, the leftmost and rightmost terms vanish; moreover, they anticommute, so the second and third term collapse onto each other:\footnote{In our conventions, the left arrow on $\overleftarrow\delta$ means that the variations $\delta q^\d_\alpha$ are to be pushed to the left. As a consequence of this, the operator $\overleftarrow\delta/\delta q^\d_\alpha$ acts from the left; in the literature this is usually denoted with an arrow pointing towards the \emph{right}. Therefore the partial derivatives $\overrightarrow\partial/\partial q^\d_{\alpha,\sigma}$ that occur in $\overleftarrow\delta/\delta q^\d_\alpha$ have reversed arrows.} 
\begin{align} \label{eq:Laplacian-halfway} 
   &= \Big\langle \overleftarrow{\delta_\bq}\big(\overleftarrow{\delta_{\bq^\d}}(\mathcal{H})\big)\Big\rangle 
    = \left\langle\overleftarrow{\delta_\bq}\left(
   \int \delta q^\d_\alpha\,\ldifv[h]{q^\d_\alpha}\,\dif^nx
   \right)\right\rangle 
    = \left\langle\int\delta q^\beta \, (-)^{|\tau|}D_\tau\left(\delta q^\d_\alpha\,\frac{\overrightarrow\partial}{\partial q^\beta_\tau}\ldifv[h]{q^\d_\alpha}\right)\,\dif^nx\right\rangle.
\end{align}
Some of the derivatives will fall on $\delta q^\d_\alpha$, while the others fall on what stands to the right of $\delta q^\d_\alpha$. Let us recall, however, that the $\Bbbk$-\/valued coupling from~$\bpi_\infty^*(\Pi\wh{\bpi}_\infty)$ for the variations of canonically conjugate variables is such that
\begin{align*}
\couple{\dif_\Ct q^\beta_\tau,\dif_\Ct q^\d_{\alpha,\sigma}} &= \phantom{+}(-)^{\gh(q^\alpha)}\delta^\alpha_\beta\cdot\delta^\varnothing_{\sigma\cup\tau}, \\
\couple{\dif_\Ct q^\d_{\alpha,\sigma},\dif_\Ct q^\beta_\tau} &= -(-)^{\gh(q^\alpha)}\delta^\alpha_\beta\cdot\delta^\varnothing_{\sigma\cup\tau},
\end{align*}
where the second Kronecker delta\/-\/symbol is nonzero only if both 
multi\/-\/indexes~$\sigma$ and~$\tau$ are empty. This definition selects only those terms in which \emph{all} the derivatives fall on the coefficients of the vertical two\/-\/form. Thus, we obtain
\begin{align*}
   \frac12\langle\overleftarrow\delta *\overleftarrow\delta(\mathcal{H})\rangle
   &= \int\langle\delta q^\beta \wedge \delta q^\d_\alpha\rangle\,
   \left.\left(\ldifv{q^\beta}\left(\ldifv[h]{q^\d_\alpha}\right)\right)\right|_{(x,[\bq],[\bq^\d])}\,\dvol,
\end{align*}
which is the well\/-\/known expression~\eqref{eq:laplacian-in-coordinates} from the literature.
\end{proof}

\begin{example}\label{ex:YM}
Take a compact, semisimple Lie group~$G$ with Lie algebra~$\mathfrak{g}$ and consider the corresponding Yang-Mills theory. Write $A^a_i$ for the (coordinate expression of) the gauge potential $A$ -- a lower index $i$ because $A$ is a one-form on the base manifold (i.e., a covector), and an upper index $a$ because $A$ is a vector in the Lie algebra $\mathfrak{g}$ of the Lie group $G$. Defining the field strength $\mathcal{F}$ by $\mathcal{F}^a_{ij} = \partial_i A^a_j - \partial_j A^a_i + f^a_{bc}A^b_i A^c_j$ where $f^a_{bc}$ are the structure constants of $\mathfrak{g}$, the Yang-Mills action is
\[
   S_0 = \frac14\int\mathcal{F}^a_{ij}\mathcal{F}^{a,ij}\,\dif^nx,
\]
and the full BV-action $S_\BV$ is
\[
   S_\text{YM} = S_0
   + \int A_a^{i\dagger}(D_i\gamma^a + f_{bc}^a A_i^b\gamma^c) \,\dif^4x
    - \frac12\int f_{ab}^c\gamma^a\gamma^b\gamma^\dagger_c\,\dif^4x.
\]
Let us calculate the BV-Laplacian of this functional. As a consequence of equation~\eqref{eq:laplacian-in-coordinates}, the only terms which survive in $\lap_\BV(S_\text{YM})$ are those which contain both $A$ and $A^\d$, or both $\gamma$ and $\gamma^\d$. Therefore,
\begin{align*}
\lap_\BV(S_\text{YM})
 &= \int\left(
    \ldifv{A_j^d}\ldifv{A^{j\d}_d}(f_{bc}^a A_a^{i\dagger}A_i^b\gamma^c)
   - \frac12\ldifv{\gamma^\d_d}\ldifv{\gamma^d}(f^c_{ab}\gamma^a\gamma^b\gamma^\dagger_c)
   \right)\dif^4x \\
 &= \int\left(
   \ldifv{A_j^d}(f_{bc}^d A_j^b\gamma^c)
   - \frac12\ldifv{\gamma^\d_d}(f^c_{db}\gamma^b\gamma^\dagger_c - f^c_{ad}\gamma^a\gamma^\dagger_c)
   \right)\dif^4x \\
 &= \int\left(
   f_{dc}^d\gamma^c - \frac12(f^d_{db}\gamma^b + f^d_{ad}\gamma^a)
   \right)\dif^4x 
  = 0.
\end{align*}
Since the BV-action $S_\text{YM}$ is by construction such that $\schouten{S_\text{YM},S_\text{YM}}$ is zero, it follows that $S_\text{YM}$ satisfies the quantum master equation~\eqref{eq:QME} tautologically -- both sides are, by independent calculations, equal to zero.
\end{example}

\begin{example}\label{ex:CF}
Consider the nonlinear Poisson sigma model introduced in~\cite{CF}. Since its fields are not all purely even, we would have to generalize all of our reasoning so far to a $\BBZ_2$-graded setup -- which is, as noted before, tedious but straightforward. A calculation of $\lap_\BV(S_\text{CF})$ of the BV-action $S_\text{CF}$ of this model would, up to minor differences in conventions and notations, proceed just as it does in the paper itself, in section 3.2 -- except that no infinite constants or delta functions appear.
\end{example}

\begin{proposition}\label{thm:LaplacianDifferential}
The linear operator $\lap_\BV \colon \ol{H}^n(\pi_\BV) \to \ol{H}^n(\pi_\BV)$ is a differential\textup{,}
\[
\left(\lap_\BV\right)^2\colon\ol{H}^n(\pi_\BV)\to0.
\]
\end{proposition}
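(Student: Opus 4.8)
The plan is to compute $(\lap_\BV)^2$ directly in the coordinate form~\eqref{eq:laplacian-in-coordinates}, using the fact that $\lap_\BV$ is, up to sign bookkeeping, the composite $\overleftarrow{\delta_\bq}\circ\overleftarrow{\delta_{\bq^\d}}$ of two variational-derivative operators that each square to zero and that anticommute with each other. Concretely, by the Corollary preceding this Proposition, $\lap_\BV = \tfrac12\langle\overleftarrow\delta *\overleftarrow\delta(\cdot)\rangle = \langle\overleftarrow{\delta_\bq}\overleftarrow{\delta_{\bq^\d}}(\cdot)\rangle$ on building blocks, so $(\lap_\BV)^2$ involves a string of four Euler operators $\overleftarrow{\delta_\bq}\,\overleftarrow{\delta_{\bq^\d}}\,\overleftarrow{\delta_\bq}\,\overleftarrow{\delta_{\bq^\d}}$ (after the intermediate coupling has been taken and an integral functional re-emerged). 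The key structural facts I would invoke are: (i) $\overleftarrow{\delta_\bq}$ and $\overleftarrow{\delta_\bq}$ are themselves "differentials" in the sense already used twice in the excerpt's proofs — $\overleftarrow{\delta_\bq}^2 = 0$ and $\overleftarrow{\delta_{\bq^\d}}^2 = 0$ because the Cartan differential $\dif_\Ct$ restricted to the $\bq$-variables (resp.\ $\bq^\d$-variables) squares to zero and the variations $\delta q^\alpha$ (resp.\ $\delta q^\d_\alpha$) anticommute among themselves; and (ii) the mixed operators anticommute: $\overleftarrow{\delta_\bq}\overleftarrow{\delta_{\bq^\d}} = -\overleftarrow{\delta_{\bq^\d}}\overleftarrow{\delta_\bq}$, again because $\delta q^\alpha\wedge\delta q^\d_\beta = -\delta q^\d_\beta\wedge\delta q^\alpha$ as Cartan two-forms. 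These are exactly the two ingredients the authors already used in the proof of the Corollary.

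First I would fix a building block $\mathcal{H} = \int h\,\dvol \in \ol{H}^n(\pi_\BV)$ and write $\lap_\BV(\mathcal{H}) = \int (\overleftarrow{\delta}_{q^\alpha}\overleftarrow{\delta}_{q^\d_\alpha}h)|_{(x,[\bq],[\bq^\d])}\,\dvol$, observing that the right-hand side is again of the same form with a new density $\tilde h = \overleftarrow{\delta}_{q^\alpha}\overleftarrow{\delta}_{q^\d_\alpha}h$. Then I would apply the formula a second time to get $(\lap_\BV)^2(\mathcal{H}) = \int \overleftarrow{\delta}_{q^\beta}\overleftarrow{\delta}_{q^\d_\beta}\bigl(\overleftarrow{\delta}_{q^\alpha}\overleftarrow{\delta}_{q^\d_\alpha}h\bigr)\,\dvol$, and I would argue this vanishes purely from the (anti)commutation relations of the four variational operators together with $\overleftarrow{\delta}_{q^\alpha}$ having odd ghost-form degree. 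The cleanest route is to note that $\sum_\alpha \overleftarrow{\delta}_{q^\alpha}\overleftarrow{\delta}_{q^\d_\alpha}$ is the coordinate shadow of the odd-parity operator $\tfrac12\langle\overleftarrow\delta * \overleftarrow\delta\rangle$, which, being built from anticommuting pieces whose "squares" kill each constituent, is nilpotent of order two; alternatively one expands the four-fold composite, uses $\overleftarrow{\delta}_{q^\beta}\overleftarrow{\delta}_{q^\alpha} = -\overleftarrow{\delta}_{q^\alpha}\overleftarrow{\delta}_{q^\beta}$ and the analogue for the daggered ones to pair up terms with opposite sign, and the sum telescopes to zero after relabeling the summation indices $\alpha\leftrightarrow\beta$. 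Finally, because $\lap_\BV$ is by Definition~\ref{def:Laplacian} defined only on $\ol{H}^n(\pi_\BV)$ here, no Leibniz-rule extension is needed, and the statement is exactly this coordinate computation.

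I expect the main obstacle to be the sign bookkeeping: one must be careful that the intermediate object $\overleftarrow{\delta}_{q^\alpha}\overleftarrow{\delta}_{q^\d_\alpha}h$ carries the correct ghost parity (it is again ghost-even, since $\overleftarrow{\delta}_{q^\d_\alpha}$ raises ghost number by one — it extracts the odd variable $\delta q^\d_\alpha$ — and $\overleftarrow{\delta}_{q^\alpha}$ lowers it by one in the effective count), so that applying $\lap_\BV$ a second time is legitimate and the same cancellation mechanism applies; and one must verify that the summation index $\alpha$ in the first application and $\beta$ in the second are genuinely independent before the anticommutation argument can be run. A secondary subtlety is justifying that no boundary terms obstruct the integrations by parts hidden inside the repeated Euler operators — but this is covered by the standing assumption (stated in Section~\ref{SecSpace}) that integration by parts produces no boundary contributions. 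Once these points are settled, the identity $(\overleftarrow{\delta}_{q^\beta}\overleftarrow{\delta}_{q^\alpha} + \overleftarrow{\delta}_{q^\alpha}\overleftarrow{\delta}_{q^\beta})h = 0$ and its daggered counterpart force every term in the expansion of $(\lap_\BV)^2(\mathcal{H})$ to cancel in pairs, and the proposition follows.
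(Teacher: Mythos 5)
There is a genuine gap, and it is precisely the pitfall this paper is at pains to expose. Your cancellation mechanism rests on treating the Euler operators as if they obeyed simple operator identities: $\overleftarrow{\delta_\bq}{}^2=0$, $\overleftarrow{\delta_{\bq^\d}}{}^2=0$, and (anti)commutation of $\ldifv{q^\alpha}$ with $\ldifv{q^\beta}$ (and their daggered counterparts). But iterated \emph{variational} derivatives of a density do not (anti)commute --- this is exactly why the identity $\lap_\BV(\schouten{F,G})=\schouten{\lap_\BV F,G}+(-)^{\gh(F)-1}\schouten{F,\lap_\BV G}$ \emph{fails} for the jet-space Laplacian (Counterexample~\ref{counterexample}), and the text states explicitly that swapping the symbols $\delta/\delta q^\alpha$ is not permissible in this setting. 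Your ``cleanest route'' has a second flaw of the same kind: $(\lap_\BV)^2$ is not the coupling of $(\overleftarrow\delta *\overleftarrow\delta)^2$, because after the first coupling one obtains a new integral functional whose density is attacked by fresh Euler operators; nilpotency of the Cartan differential before coupling does not transfer to nilpotency of $\lap_\BV$ across the coupling step (if it did, the finite-dimensional $\partial^2/\partial q\,\partial q^\d$ would be ``trivially'' nilpotent by $\dif^2=0$, which is not how that proof goes). There is also a parity slip: the undaggered variables $q^\alpha$ are ghost-even, so the relevant partial derivatives in $\alpha,\beta$ are \emph{symmetric}, not anticommuting; the zero comes from pairing that symmetric block against the antisymmetric daggered block, not from anticommutation of the $q$-derivatives among themselves.

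The missing idea in the paper's proof is the reduction that makes any (anti)commutation argument legitimate at all: since $\ldifv{q^\beta}\circ D_\tau=0$ for every $\tau$, the three rightmost Euler operators in
\[
\lap_\BV^2(\mathcal{H})=\int\ldifv{q^\alpha}\ldifv{q^\d_\alpha}\ldifv{q^\beta}\ldifv[h]{q^\d_\beta}\,\dvol
\]
collapse to the bare partial derivatives $\vec{\partial}/\partial q^\d_\beta$, $\vec{\partial}/\partial q^\beta$, $\vec{\partial}/\partial q^\d_\alpha$ (only the empty multi-index survives), while the total-derivative terms of the leftmost one die under the integral by the no-boundary-terms convention. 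Only after this does one have genuine partial derivatives, which commute or anticommute according to parity, and then the composition of the symmetric pair $\vec{\partial}/\partial q^\alpha\,\vec{\partial}/\partial q^\beta$ with the antisymmetric pair $\vec{\partial}/\partial q^\d_\alpha\,\vec{\partial}/\partial q^\d_\beta$ vanishes. Without this reduction your pairwise cancellation after relabelling $\alpha\leftrightarrow\beta$ is unjustified, and the proof does not go through as written.
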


\begin{proof}
Using equation~\eqref{eq:laplacian-in-coordinates} twice, we may write a repeated application of~$\lap_\BV$ on~$\mathcal{H} = \int h\bigl(x$,\ $[\bq]$,\ $\bq^\d]\bigr)\,\dvol\in\ol{H}^n(\pi_\BV)$ as
\begin{equation}\label{eq:laplacian^2}
\lap_\BV^2(\mathcal{H}) = \int \ldifv{q^\alpha}\ldifv{q^\d_\alpha}\ldifv{q^\beta}\ldifv[h]{q^\d_\beta}\,\dvol.
\end{equation}
Consider the rightmost variational derivative,
\[
\ldifv{q^\d_\beta} = \frac{\overrightarrow\partial}{\partial q^\d_\beta} + \sum_{\tau\neq\varnothing}D_\tau\circ\frac{\overrightarrow\partial}{\partial q^\d_{\beta,\tau}}.
\]
To the immediate left of this in equation~\eqref{eq:laplacian^2} stands the next variational derivative,~$\overleftarrow{\delta}/\delta{q^\beta}$. Since $\ldifv{q^\beta}\circ D_\tau = 0$ for any~$\beta$ and~$\tau$, all of the terms in the sum above disappear, and the rightmost variational derivative~$\overleftarrow{\delta}/\delta{q^\d_\beta}$ becomes just~${\overrightarrow\partial}/{\partial q^\d_\beta}$ in equation~\eqref{eq:laplacian^2}. A~similar process happens with the second and the third variational derivatives. As to the last, leftmost one, all terms of this variational derivative that contain total derivatives do not contribute to the functional, because the integral over a total derivative is zero according to our convention on no boundary terms. Summarizing, we have
\begin{align*}
\lap_\BV(\mathcal{H}) 
 &= \int \frac{\overrightarrow\partial}{\partial q^\alpha}\frac{\overrightarrow\partial}{\partial q^\d_\alpha}\frac{\overrightarrow\partial}{\partial q^\beta}\frac{\overrightarrow{\partial h}}{\partial q^\d_\beta}\,\dvol.
\intertext{Since the middle two partial derivatives in this expression commute, this becomes}
{}&= \int \frac{\overrightarrow\partial}{\partial q^\alpha}\frac{\overrightarrow\partial}{\partial q^\beta}\frac{\overrightarrow\partial}{\partial q^\d_\alpha}\frac{\overrightarrow{\partial}}{\partial q^\d_\beta}(h)\,\dvol.
\end{align*}
This is the composition 
of an expression which is symmetric in~$\alpha$ and~$\beta$ (the left two partial derivatives) and an expression which is antisymmetric in~$\alpha$ and~$\beta$ (the right two partial derivatives). Therefore it is zero.
\end{proof}

As we did before with the Schouten bracket, we now extend the BV-\/Laplacian from the space~$\ol{H}^n(\pi_\BV)$ of building blocks~\eqref{eq:BB} to the linear subspace $\ol{\mathfrak{M}}^n(\pi_\BV)$ generated in~$\mathfrak{M}^n(\pi_\BV)=\text{Map}\,(\Gamma(\pi_\BV)\to\Bbbk)$ by elements from~$\ol{H}^n(\pi_\BV)$ under multiplication in~$\Bbbk$ of their values at sections~$s \in \Gamma(\pi_\BV)$.
 
\begin{theorem}\label{thm:LeibnizLaplacian}
The geometric construction of the BV-\/Laplacian 
in Definition~\textup{\ref{def:Laplacian}} extends to the linear operator~$\lap_\BV$ on the space $\ol{\mathfrak{M}}^n(\pi_\BV)$ such that
\begin{align}\label{eq:laplacian-on-product}
\lap_\BV(F\cdot G)(s)
&= (\lap_\BV(F)\cdot G)(s) + (-)^{\gh(F)}\schouten{F,G}(s) + 
 (-)^{\gh(F)}(F\cdot\lap_\BV(G))(s) \notag\\
{}&= (\lap_\BV F)(s)\cdot G(s) + (-)^{\gh(F)}\schouten{F,G}(s) + 
 (-)^{\gh(F)}F(s)\cdot(\lap_\BV G)(s),
\end{align}
for any $F,G\in\ol{\mathfrak{M}}^n(\pi_\BV)$ and all~$s\in\Gamma(\pi_\BV)$\textup{;} the antibracket~$\schouten{\,,}$ measures the deviation of $\lap_\BV$ from being a derivation.
\end{theorem}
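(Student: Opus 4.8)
The plan is to mirror the structure of the proof of Theorem~\ref{thm:LeibnizSchouten}: reduce the general composite case to the case of a product of two integral functionals $F, G \in \ol{H}^n(\pi_\BV)$ by an inductive unravelling of $F\cdot G\cdot H$, and then establish formula~\eqref{eq:laplacian-on-product} for that base case by a direct computation of $\lap_\BV(F\cdot G)$ from the provisional Definition~\ref{def:Laplacian}. The key point, as in Remark~\ref{remark:pi-products}, is that $F\cdot G$ is an integral functional over the product bundle $\pi_\BV\times\pi_\BV$ with Cartan differential $\Id_\cC = \stackrel{(1)}{\Id_\cC'} + \stackrel{(2)}{\Id_\cC''}$, so that $\overleftarrow\delta(F\cdot G) = \overleftarrow{\delta' F}\cdot G + F\cdot\overleftarrow{\delta'' G}$ and, since the operator $*$ and the second variation act copy-wise, the outer $\langle\overleftarrow\delta * (\cdot)\rangle$ breaks into four contributions: two ``diagonal'' ones in which both variations come from the same copy, and two ``cross'' ones pairing a variation from the $F$-copy with a variation from the $G$-copy.

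First I would carry out the expansion: writing $\lap_\BV(F\cdot G) = \tfrac12\langle\overleftarrow\delta * \overleftarrow\delta(F\cdot G)\rangle$ and substituting $\overleftarrow\delta = \stackrel{(1)}{\overleftarrow\delta{}'} + \stackrel{(2)}{\overleftarrow\delta{}''}$ in both slots, one obtains nine terms, of which the $\stackrel{(1)}{}\!\!-\!\!\stackrel{(2)}{}$ and $\stackrel{(2)}{}\!\!-\!\!\stackrel{(1)}{}$ mixed terms of the form $\langle \stackrel{(1)}{\overleftarrow\delta{}'} * \stackrel{(2)}{\overleftarrow\delta{}''}(F\cdot G)\rangle$ do not vanish because a variation $\delta q^\alpha$ attached in the first copy and a variation $\delta q^\d_\beta$ attached in the second copy couple only after the bases and fibres are identified, exactly as in the merging mechanism described in the Remark after Definition~\ref{def:Schouten}. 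The two diagonal terms $\langle \stackrel{(a)}{\overleftarrow\delta{}} * \stackrel{(a)}{\overleftarrow\delta{}}(\cdot)\rangle$ reproduce $\lap_\BV(F)\cdot G$ and $F\cdot\lap_\BV(G)$ respectively, using the fact (seen in the proof of the corollary to Definition~\ref{def:Laplacian}) that only terms with all total derivatives falling on coefficients survive the coupling, so that e.g. $\tfrac12\langle\stackrel{(1)}{\overleftarrow\delta{}'} * \stackrel{(1)}{\overleftarrow\delta{}'}(F\cdot G)\rangle = (\lap_\BV F)\cdot G$ since $G$ carries no $q', q'^\d$ dependence. The two cross terms, after the $\pm1$ couplings $\langle\delta q^\alpha\wedge\delta q^\d_\beta\rangle = \delta^\alpha_\beta$ have been evaluated and the copies merged onto a single section $s$, combine into precisely $\langle\overrightarrow{\delta F}\wedge\overleftarrow{\delta G}\rangle = \schouten{F,G}$; the factor $\tfrac12$ disappears because the two cross terms are equal after using the skew-symmetry of the wedge and the sign rule for $*$, and careful bookkeeping of how $*$ acts (turning $\delta\bq$ into $-\delta\bq$ in one copy) together with transporting the $F$-variations to the right versus the $G$-variations to the left produces the overall sign $(-)^{\gh(F)}$ in front of all three right-hand terms. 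It is convenient to check the signs in coordinates using formula~\eqref{eq:laplacian-in-coordinates} and Corollary~\ref{thm:Schouten}, where one sees the cross terms as $\rdifv[f]{q^\alpha}\cdot\ldifv[g]{q^\d_\alpha}$ and $\rdifv[f]{q^\d_\alpha}\cdot\ldifv[g]{q^\alpha}$ with opposite signs, matching the Schouten bracket.

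Once the two-factor case is in hand, I would do the induction on the number of factors of $G\cdot H$: treating $\lap_\BV$ as already defined on products of up to $i$ integral functionals, apply the two-factor formula to $\lap_\BV\bigl(F\cdot(G_1\cdots G_i)\bigr)$ with the ``second factor'' being the composite $G_1\cdots G_i$ regarded as an integral functional on the product bundle, then expand $\lap_\BV(G_1\cdots G_i)$ and $\schouten{F, G_1\cdots G_i}$ using the Leibniz rule of Theorem~\ref{thm:LeibnizSchouten} and the inductive hypothesis; consistency of the signs is guaranteed because both $\lap_\BV$ and $\schouten{\,,}$ obey compatible graded Leibniz rules (this is the standard statement that $\lap_\BV$ is a second-order operator generating the bracket, so the extension is forced and well-defined). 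I expect the main obstacle to be \emph{not} the algebra of the induction but the sign bookkeeping in the base case: precisely tracking, through the left/right arrow conventions on $\overleftarrow\delta$, the action of $* = (-)^{\gh(\cdot)-1}$, and the transport of Cartan differentials past factors of differing ghost parity, the claim that the two cross contributions are equal and sum (without the $\tfrac12$) to $(-)^{\gh(F)}\schouten{F,G}$ rather than, say, $-(-)^{\gh(F)}\schouten{F,G}$ or $(-)^{\gh(F)}\schouten{G,F}$. I would pin this down by evaluating both sides on a section $s$ in local coordinates via~\eqref{eq:laplacian-in-coordinates},~\eqref{eq:DefLaplacian} and Corollary~\ref{thm:Schouten}, since there the merging of copies is already accomplished and only the finite-dimensional sign rule $\langle\dif_\Ct q^\alpha, \dif_\Ct q^\d_\beta\rangle = (-)^{\gh(q^\alpha)}\delta^\alpha_\beta$ and the commutation of the partial derivatives are in play.
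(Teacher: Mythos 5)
Your proposal is correct, and its computational core --- splitting $\overleftarrow\delta$ over the copies of $\pi_\BV$ carrying $F$ and $G$, reading the diagonal contributions as $(\lap_\BV F)\cdot G$ and $F\cdot\lap_\BV G$, and recognizing the cross contributions, after the couplings merge the copies, as $(-)^{\gh(F)}\schouten{F,G}$ --- is exactly the paper's own proof, which moreover starts from the already-simplified formula~\eqref{eq:Laplacian-halfway} so that the $\tfrac12$ and the operator $*$ are disposed of before the Leibniz expansion. The only organizational difference is that the paper handles composite $F=F_1\cdots F_{i'}$ and $G=G_1\cdots G_{i''}$ in one pass over the $(i'+i'')$-fold product bundle, making your extra induction on the number of factors unnecessary; note also two slips in your wording (the split yields four, not nine, terms, and the sign $(-)^{\gh(F)}$ sits only on the bracket term and on $F\cdot\lap_\BV G$, not on $(\lap_\BV F)\cdot G$), neither of which affects the argument.
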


\begin{proof}
Without loss of generality let us assume that the local functionals~$F=F_1\cdot\ldots\cdot F_{i'}$ and~$G=G_1\cdot\ldots\cdot G_{i''}$ are homogeneous terms in~\eqref{eq:BB}, consisting of~$i'$ and~$i''$ building blocks from~$\overline{H}^n(\pi_\BV)$, respectively. Following Remark~\ref{remark:pi-products}, we then consider the product,
\[
\underbrace{\pi_\BV\times\ldots\times\pi_\BV}_{\text{$i'$ copies}}\times
\underbrace{\pi_\BV\times\ldots\times\pi_\BV}_{\text{$i''$ copies}}{}
={}
\stackrel{(1)}{\pi_\BV'}\times\ldots\times\stackrel{(i')}{\pi_\BV'}\times
\stackrel{(1)}{\pi_\BV''}\times\ldots\times\stackrel{(i'')}{\pi_\BV''},
\]
of~$i'+i''$ copies of the BV-\/bundle~$\pi_\BV$. The total space of this product of bundles is endowed with the Cartan differential
\[
\Id_\cC = \Id_\cC' + \Id_\cC''
\]
where $\Id_\cC'$ is the Cartan differentials on the $i'$ copies of $\pi_\BV$ of $F$, and $\Id_\cC''$ is the Cartan differentials on the $i''$ copies of $\pi_\BV$ of $G$.

We start from the left hand side of formula~\eqref{eq:Laplacian-halfway}:
\begin{align*}
\lap_\BV(F\cdot G)
&= \Big\langle\overleftarrow{\delta_\bq}\big(\overleftarrow{\delta_{\bq^\d}}(F\cdot G)\big)\Big\rangle 
 = \Big\langle \Big(\overleftarrow{\delta_\bq'} + \overleftarrow{\delta_\bq''}\Big)\Big(\overleftarrow{\delta_{\bq^\d}'} + \overleftarrow{\delta_{\bq^\d}''}\Big)(F\cdot G)\Big\rangle \\
&= \Big\langle \Big(\overleftarrow{\delta_\bq'} + \overleftarrow{\delta_\bq''}\Big)\Big(\overleftarrow{\delta_{\bq^\d}'}F\cdot G + (-)^{\gh(F)} F\cdot \overleftarrow{\delta_{\bq^\d}''}G\Big)\Big\rangle \\
&= \Big\langle
\overleftarrow{\delta_\bq'}\overleftarrow{\delta_{\bq^\d}'}F\cdot G 
- \overleftarrow{\delta_{\bq^\d}'}F\cdot \overleftarrow{\delta_\bq''}G
+(-)^{\gh(F)} \overleftarrow{\delta_\bq'}F\cdot \overleftarrow{\delta_{\bq^\d}''}G
+(-)^{\gh(F)} F\cdot \overleftarrow{\delta_\bq''}\overleftarrow{\delta_{\bq^\d}''}G
\Big\rangle.
\end{align*}
The second term carries a minus sign because the anticommuting differentials $\overleftarrow{\delta_{\bq^\d}'}$ and $\overleftarrow{\delta_\bq''}$ have swapped positions in it. Reversing the direction of the arrow of $\overleftarrow{\delta_{\bq^\d}'}F$ in this term (i.e., the odd\/-\/parity variation~$\delta\bq^\d$ is transported to the right instead of left), it gains an extra sign~$(-)^{\gh(F)-1}$. A similar arrow reversion~$\overleftarrow{\delta_\bq}\mapsto\overrightarrow{\delta_\bq}$ in the third term does not produce a minus sign. Then the third and second terms combine to the Schouten bracket, while we recognize the first and the last one as the Laplacian acting on $F$ and $G$, respectively:
\[
\lap_\BV(F\cdot G) = 
\Big\langle\overleftarrow{\delta'_\bq}\overleftarrow{\delta'_{\bq^\d}}F\cdot G
+(-)^{\gh(F)}\left(
 \overrightarrow{\delta'_\bq}F\wedge\overleftarrow{\delta''_{\bq^\d}}G 
 +\overrightarrow{\delta'_{\bq^\d}}F\wedge\overleftarrow{\delta''_{\bq}}G 
\right)
+(-)^{\gh(F)}F\cdot\overleftarrow{\delta''_\bq}\overleftarrow{\delta''_{\bq^\d}}G\Big\rangle.
\]
Finally, we couple the Cartan differentials of the BV-\/variables and the assertion follows.
\end{proof}

Contrary to the conventional BV-Laplacian (see Proposition~\ref{ThLapSchouten} on page~\pageref{ThLapSchouten}), the BV-Laplacian $\lap_\BV$ defined here does \emph{not} always satisfy the equation $\lap_\BV(\schouten{F,G}) = \schouten{\lap_\BV F,G} + (-)^{\gh(F)-1}[\![F$,\ $\lap_\BV G]\!]$. The reason is that if we were to proceed with the calculation of $\lap_\BV(\schouten{F,G})$ as in the proof of Proposition~\ref{ThLapSchouten} (see below), it is no longer permissible to swap the symbols $\delta/\delta q^\alpha$ -- which here stand for the \emph{variational} derivative -- with each other. Therefore the four remaining terms in equation~\eqref{eq:conventional-laplacian-schouten} do not cancel. To see this explicitly, take the following counterexample.\footnote{We
warn the reader that all claims in Counterexample~\ref{counterexample} about any equalities between functionals (specifically, for $\Delta_{\BV}F$ and~$\Delta_{\BV}G$ or for $\lshad F,G\rshad$ and~$\Delta\bigl(\lshad F,G\rshad\bigr)$, etc.) should be viewed as the classical parable about a cage which contains an elephant but carries an inscription ``\textsc{Buffalo}''~--- one may not trust own eyes. We refer to Remark~\ref{RemSynonyms} in section~\ref{sec:conventional laplacian} and also to Example~\ref{ExCounter2} on p.~\pageref{ExCounter2} in which we explicitly calculate the objects $\Delta F$ and~$\Delta G$ and~$\Delta\bigl(\lshad F,G\rshad\bigr)$, confirming the validity of equality~\eqref{EqNotHolds}.}

\begin{counterexample}\label{counterexample}
Let the base and fibre both be one-dimensional, and set
\[ 
   F = \int q^\d q q_{xx}\,\dif x
   \quad\text{and}\quad
   G = \int q^\d_{xx}\cos q\,\dif x.
\]
Let $f$ and $g$ be the two integrands. We calculate
\begin{align}\label{eq:counterexample}
   \rdifv[f]{q}
      &= q^\d q_{xx} + D_x^2(q^\d q)
       = q^\d q_{xx} + q^\d_{xx}q + 2q^\d_x q_x + q^\d q_{xx}
       = 2q^\d q_{xx} + q^\d_{xx}q + 2q^\d_x q_x, \\
   \rdifv[f]{q^\d} &= qq_{xx}, \qquad\qquad
   \ldifv[g]{q} = -q^\d_{xx}\sin q, \nonumber\\
   \ldifv[g]{q^\d}
      &= D_x^2(\cos q)
       = D_x(-q_x\sin q)
       = -q_{xx}\sin q - q_x^2\cos q.\nonumber
\end{align}
(Note that since all four variational derivatives contain at most one parity-odd $q^\d$ or its derivatives, the directions of the arrows do not actually matter -- i.e., switching their direction does not result in minus signs.) Consider $\lap_\BV(\schouten{F,G})$. As was noted in Proposition~\ref{thm:LaplacianDifferential}, we may write $\lap_\BV(\mathcal{H}) = \int(\partial/\partial q)\circ(\partial/\partial q^\d)(\mathcal{H})$ for any $\mathcal{H} \in \ol{H}^n(\pi_\BV)$; therefore, only terms in which $\schouten{F,G}$ carries at least one $q^\d$ without derivatives with respect to the base space survive. This implies that in the first term of the bracket, $\overrightarrow{\delta f}/\delta q\cdot\overleftarrow{\delta g}/\delta q^\d$, the second and third term of the right hand side of~\eqref{eq:counterexample} do not contribute, so we need not take them into account:
\[
   \schouten{F,G} = \int\big(2q^\d q_{xx}(-q_{xx}\sin q - q_x^2\cos q) + \cdots
    - qq_{xx}\cdot(-q^\d_{xx}\sin q)\big)\,\dif x,
\]
where the dots indicate the omitted terms. For the same reason, the last term also does not contribute.

We calculate
\begin{align*}
   \lap_\BV(\schouten{F,G})
   &= -2\int\frac\partial{\partial q}\frac\partial{\partial q^\d}
      (q^\d q_{xx}^2\sin q + q^\d q_{xx}q_x^2\cos q)\,\dif x \\
   &= -2\int\frac\partial{\partial q}(q_{xx}^2\sin q + q_{xx}q_x^2\cos q)\,\dif x \\
   &= -2\int(q_{xx}^2\cos q - q_{xx}q_x^2\sin q)\,\dif x,
\end{align*}
whose integrand is not cohomogically trivial (as may be seen by calculating its variational derivative, which gives nonzero).

Now $\lap_\BV F = \int q_{xx}\,\dif x \cong 0$, so $\schouten{\lap_\BV F,G} = 0$. On the other hand, $g$ has no $q^\d$ without any $x$-derivatives in it so $\lap_\BV G = 0$, so $\schouten{F, \lap_\BV G} = 0$ as well. In conclusion,
\begin{equation}\label{EqNotHolds}
   \schouten{\lap_\BV F,G} + (-)^{\gh(F)-1}\schouten{F, \lap_\BV G} = 0 \neq \lap_\BV(\schouten{F,G}).
\end{equation}
\end{counterexample}

This counterexample shows that we have reached the limits of a jet-space approach to the BV-geometry via Vinogradov's $\Ct$-spectral sequence $E^{p,q}_i$ (specifically, the upper line $E^{n,q}_1$ of its first term such that $E^{n,0}_1 = \ol{H}^n(\pi_\BV)$). Let us note, however, that until now we operated with the objects $\dif_\Ct q^\alpha$ and $\delta q^\alpha$ or $\dif_\Ct q^\d_\alpha$ or $\delta q^\d_\alpha$ by viewing them as the Cartan differentials of dependent variables (essentially, the De Rham differentials).

We now pass from the space of infinite jets of sections in the BV-bundle $\pi_\BV$ to the space of sections $\Gamma(\bpi\mathbin{{\times}_M}\Pi\wh\bpi)$, that is, to the (anti)fields. Retaining the full arsenal of already known objects and structures, we let the approach become slightly more functional analytic. For example, we recall that both $\bpi$ and $\Pi\wh\bpi$ are vector bundles over the base manifold $M$ and that a section's \emph{variation} $\delta s = (\delta s^\alpha, \delta s^\d_\beta)$ of $s \in \Gamma(\bpi\mathbin{{\times}_M}\Pi\wh\bpi)$ itself is a section of the bundle $T(\bpi\mathbin{{\times}_M}\Pi\wh\bpi)$.

\section{The functional definition of BV-Laplacian}\label{sec:conventional laplacian}
\noindent%
To approach the self-regularization in a genuine definition of the Batalin\/--\/Vilkovisky Laplacian, we analyze first
the basic geometry of variation of functionals. Namely, let us study the interrelation of bundles in the course
of integration by parts, the implications of the locality postulate, and a rigorous construction of the iterated
functional derivatives.

The core of difficulties which became manifest in Counterexample~\ref{counterexample} is that a use of the vector bundle 
$\pi_{\text{BV}}$ in the construction of the space $\ov{\gM}(\pi_{\BV})$, which contains formal sums of products
$F_1\cdot\ldots\cdot F_k$ of integral functionals $F_i\in\ov{H}^n(\pi_{\BV})$, is insufficient to grasp the full
geometry of the calculus of variations. Indeed, several important
identities
combining the Schouten bracket with the BV-Laplacian do not hold; such identities involve higher-order variational
derivatives, but those need to be regularized (or proclaimed permutable) whenever one inspects a response of a functional to
a shift of its argument along different test sections, i.\,e., in several independent directions. To circumvent 
the difficulties, we shall enlarge the space of functionals in such a way that there is enough room to store the
information about test shifts. Paradoxally, this generalization not only encodes properly the variations of (anti)fields
but also communicates to the functionals a kind of ``memory'' of the way in which they were obtained from primary
functionals (essentially, from the observables). The effect of the functionals' memory is manifest through the existence
of synonymic mappings $\Gamma(\pi_{\BV})\to\Bbbk$ which yield equal numbers for every given section yet which belong
to non-isomorphic spaces and behave differently under multiplication by using the Schouten bracket or under application
of the BV-Laplacian.

This approach resolves the obstructions in a problem of intrinsic regularization of the Schouten bracket and the
BV-\/Laplacian; the newly defined objects match in all standard ways (see Theorems~\ref{ThLapSchouten} 
and~\ref{ThBVDifferential} below).

For the sake of brevity, we denote by $[\bu]$ a differential dependence on the unknown variables (specifically to the BV-\/setup, a dependence on~$[\bq]$ 
and~$[\bq^\dagger]$).

\begin{example}\label{ExOneStep}
In particular, we shall explain why the conventional formula
\begin{equation}\label{EqConventFD}
\left.\frac{\Id}{\Id\veps}\right|_{\veps=0}F(s+\veps\cdot\overleftarrow{\delta s})=\int_M\Id\bx\,\de s(\bx)\cdot
\left.\frac{\overleftarrow{\delta f}(\bx,[\bu])}{\de\bu}\right|_{j^{\infty}_{\bx}(s)}
\end{equation}
for calculation of a functional's response to a test shift of its argument is a \textit{consequence} of the definition~---
but not itself a definition.
Containing a single functional derivative of $F=\int f(\bx,[\bu])\,\Id\bx\in\ov{H}^n(\pi_{\BV})$, formula~\eqref{EqConventFD}
is especially instructive: we claim that a simplicity with which a correct expression is obtained by just one step is
misleading; it hides a longer reasoning of which the right-hand side of~\eqref{EqConventFD} is an implication. Indeed,
let us notice that the left-hand side of~\eqref{EqConventFD} refers to \textit{three} bundles (namely, $\pi_{\BV}$ for the integral functional $F\in\ov{H}^n(\pi_{\BV})$, the vector bundle $\bpi\mathbin{{\times}_M}\Pi\widehat{\bpi}$ for a background section~$s$, and the tangent bundle $T(\bpi\mathbin{{\times}_M}\Pi\widehat{\bpi})$ for the shift section~$\de s$).\footnote{Because the fibre $V_x\oplus\Pi V_x^\dagger$ over~$x\in M$ in the bundle $\bpi\mathbin{{\times}_M}\Pi\widehat{\bpi}$ is a vector space, its tangent space at a point~$s(x)$ is isomorphic to the fibre. Still we emphasize that there is a distinction between~$s$ and~$\delta s$ (they are sections of different bundles). We note further that $\de s\in\Gamma\bigl(T(\bpi\mathbin{{\times}_M}\Pi\widehat{\bpi})\bigr)$ is postulated to be independent of $s\in\Gamma(\bpi\mathbin{{\times}_M}\Pi\widehat{\bpi})$, i.e., the shift of~$s$ at~$x$ is the same at all values~$s(x)\in V_x\oplus\Pi V_x^\dagger$ so that the notation~$\delta s(x)$ makes sense (a rigorous formula would be $\delta s\bigl(s(x)\bigr)\in T_{s(x)}\bigl(V_x\oplus\Pi V_x^\dagger\bigr)$. The independence of test shifts of the background field manifests the translation invariance of the measure in Feynman's path integral (see section~\ref{SecMaster}).}
However, these domains of definition merge to the integration manifold $M$ in the right-hand side. Therefore, from~\eqref{EqConventFD} it remains
unclear whether the variational derivative $\overleftarrow{\delta f}/\de\bu$ refers to one (which would be false) or two (true!) copies 
of $M$. (In fact, we have that 
$\overleftarrow{\delta f}(\bx,[\bu])/\de\bu=\sum\limits_{|\sigma|\geqslant0}\left.\left(-\frac{\Id}{\Id\bby}\right)^{\sigma}\right|_{\bby=\bx}
\bigl(\vec{\dd}f(\bx,[\bu])/\dd\bu_{\sigma}\bigr)$.) We argue that the multiplication $\cdot$ in the integrand 
of~\eqref{EqConventFD} is a result of the coupling in the defining equality
\[
\left.\frac{\Id}{\Id\veps}\right|_{\veps=0}F(s+\veps\cdot\overleftarrow{\delta s})=\int_M\Id\bx\int_M\Id\bby
\left\langle\de s(\bby),\left.\left(-\frac{\Id}{\Id\bby}\right)^{\sigma}
\left(\frac{\vec{\dd}f(\bx,[\bu])}{\dd\bu_{\sigma}}\right)\right|_{j^{\infty}_{\bx}(s)}\right\rangle.
\]
The mechanism of congruence $\bby=\bx$ and the geometric self-regularization of Dirac's delta\/-\/distribution, 
which is provided by that mechanism, appeals to the coupling 
\[
\int_M a(\bby){\bigr|}_{\bby=\bx}\cdot b(\bx)\,\Id\bx=\int_M\Id\bx\int_M\Id\bby\,\langle a(\bby),b(\bx)\rangle
\]
of (co)vector fields $\langle a|$ and $|\,b\rangle$ on $M$. Thus, as a by\/-\/product we clarify the derivation of the Euler\/--\/Lagrange equation $\de F(s_0)=0$ upon the stationary points~$s_0$ of an action functional~$F$. 

However, for identities containing several variational derivatives it becomes crucial that the differential
of an integral functional's density is always referred 
to the ``private'' copy of the base $M\ni\bx$ owned in $\ov{H}^n(\pi_{\BV})$ by that functional, whereas the
integrations by parts are always performed with respect to the individual domains of definition $M\ni\bby_1,\ldots,M\ni\bby_k$ 
of the variations. A restriction to the diagonal $\bx=\bby_1=\ldots=\bby_k$ stems from the coupling of those auxiliary vector
bundles' sections $\de s_1,\ldots,\de s_k$ with the $k$-th order variational derivative; the latter is a section of the dual
vector bundle~$T^*(\bpi\mathbin{{\times}_M}\Pi\widehat{\bpi})$ 
whenever any $k-1$ variations are viewed as parameters in the coupling.
\end{example}

Let us remember that the initial bundle $\pi$ and the bundles $\bpi$ of fields and $\widehat{\bpi}$ of antifields
(such that $\ov{J^{\infty}_{\pi}}(\pi_{\BV})=J^{\infty}(\bpi)\mathbin{{\times}_M}J^{\infty}(\Pi\widehat{\bpi})$)
are \textit{vector} bundles over $M$. This means that the fibre over each point $x\in M$ in $\bpi\mathbin{{\times}_M}\Pi\widehat{\bpi}$ is 
a vector space $V_{\bx}\oplus\Pi V_{\bx}^\dagger$ endowed with a $\Bbbk$-linear structure; the sum of two sections
$s\in\Gamma(\bpi\mathbin{{\times}_M}\Pi\widehat{\bpi})$ and $\veps\cdot\de s\in\Gamma\bigl(T(\bpi\mathbin{{\times}_M}\Pi\widehat{\bpi})\bigr)$ is defined pointwise on $M$:
we have that $(s+\veps\cdot\de s)(\bx)=s(\bx)+\veps\cdot\de s(\bx)\in V_{\bx}\oplus\Pi V_{\bx}^\dagger$ (here we use the vector space structure of the fibre
$V_{\bx}\oplus\Pi V_{\bx}^\dagger$, also identifying it with its own tangent space at~$s(x)$ so that the sum of sections makes sense).
This is the locality postulate which is brought into the model by hand. 
(Note that in what follows we avoid an identification of $\Gamma\bigl(T(\bpi\mathbin{{\times}_M}\Pi\widehat{\bpi})\bigr)$ with $\Gamma(\bpi\mathbin{{\times}_M}\Pi\widehat{\bpi})$ even if we deal with the fibrewise\/-\/constant shifts~$\delta s$ for which $\delta s\bigl(s_1(x)\bigr)=\delta s\bigl(s_2(x)\bigr)\mathrel{{=}{:}}\delta s(x)$ for any $s_1,s_2\in\Gamma(\bpi\mathbin{{\times}_M}\Pi\widehat{\bpi})$ at~$x\in M$.) 
The locality postulate is a mechanism which communicates the value of $\veps\cdot\de s$ at $\bby\in M$ or the values at $\bby\mathrel{{:}{=}}\bx$ of
derivatives of $\veps\cdot\de s$ along its domain of definition to the geometry that carries the section $s$ and handles its value at~$\bx\in M$.

We now transform the postulate of locality (which stemmed from the initial hypothesis that the bundles' fibres are vector
spaces) and the multiplication $\cdot$ of the objects' coefficients to the mechanism of congruence $\bby=\bx$ of attachment
points for (co)vector fields $\langle\,|(\bby)$ or $|\,\rangle(\bby)$ and $|\,\rangle(\bx)$ or $\langle\,|(\bx)$ in the coupling
$\langle\,,\,\rangle\colon T_{s(y)}\Pi V_{\bby}^\dagger\times T_{s(x)}V_{\bx}\to\Bbbk$ and $\langle\,,\,\rangle\colon T_{s(y)}V_{\bby}\times T_{s(x)}\Pi V^\dagger_{\bx}\to\Bbbk$ (respectively, we have $\langle\,,\,\rangle\colon T^*_{s(y)}V_{\bby}\times T_{s(x)}V_{\bx}\to\Bbbk$ and $\langle\,,\,\rangle\colon T_{s(y)}V_{\bby}\times T^*_{s(x)}V_{\bx}\to\Bbbk$ up to the introduction of ghost parity).
This will be the key element in a self\/-\/regularization procedure for $\lshad\ ,\,\rshad$ and $\Delta_{\BV}$ when,
at the second step of the reasoning, we explicitly use the duality between the halves of sections for
$\bpi\mathbin{{\times}_M}\Pi\widehat{\bpi}$.
The operational definitions of Schouten bracket and BV-\/Laplacian amount to  surgery algorithms for reconfigurations of
such couplings (see Definition~\ref{DefBVLap} below).
Currently, we deal with iterated functional derivatives in full generality, not actually referring to the composite
structure of $\bpi\mathbin{{\times}_M}\Pi\widehat{\bpi}$ and of the associated tangent bundle.

Let us extend the intrinsic geometry of each building block in construction~\eqref{eq:BB} of the composite functionals space
$\ov{\gM}^n(\pi_{\BV})$. Leaving the multiplication of integral functionals intact, we replace the bundle $\pi_{\BV}$ for
$j^{\infty}(s)$ in each $\ov{H}^n(\pi_{\BV})$ by the product of the BV\/-\/bundle~$\pi_{\BV}$ with $k$~copies $T\pi_{\BV}\times\ldots\times T\pi_{\BV}$ of
its tangent bundle;
the future sections of such product bundle will be composed by
$s\in\Gamma(\bpi\mathbin{{\times}_M}\Pi\widehat{\bpi})$ and $k$~variations $\de s_1,\dots,\de s_k$ which are constant along each fibre in~$\pi_{\BV}$ 
(here~$k\ge0$). 
We introduce the space\footnote{By construction, elements of $\ov{\gN}^n(\pi_{\BV})$ will determine linear maps with respect
to each variation $\veps_j\cdot\de s_j(y_j)$ in the limit~$\veps_j\to0$.}
\begin{multline*}
\ov{\gN}^n(\pi_{\BV})=\bigoplus\limits_{i=1}^{+\infty}\bigotimes\nolimits_{\Bbbk}^i\bigoplus\limits_{k=0}^{+\infty}
\ov{H}^{n(1+k)}(\pi_{\BV}\times\underbrace{T\pi_{\BV}\times\ldots\times T\pi_{\BV}}_{k\text{ copies}})\subseteq{}\\
{}\subseteq\text{Map}\bigl(
  \Gamma(\pi_{\BV})\times\smash{
   \overbrace{\Gamma(T\pi_{\BV})\times\ldots\times\Gamma(T\pi_{\BV}) }^%
{k\text{ variations}}  }\to\Bbbk\bigr).
\end{multline*}
Note that the product of bundles in the above formula is not a Whitney sum over the common base $M$; conversely, 
each variation $\de s_j$ brings its own copy of the base into the geometry. An analogous technique was mentioned
in Remark~\ref{remark:pi-products} on p.~\pageref{remark:pi-products}; that approach will be used again --~at a later stage~-- in the functional definition
of the Schouten bracket that combines two functionals from $\ov{\gN}^n(\pi_{\BV})\supsetneq\ov{\gM}^n(\pi_{\BV})$
to one functional. However, we now analyze the mechanism of merging the bases $M\times M\times\ldots\times M$ in the 
product of bundles inside each factor $F_j\in\ov{H}^{n(1+k)}(\pi_{\BV}\times T\pi_{\BV}\times\ldots\times T\pi_{\BV})$
of a homogeneous term $F_1\cdot\ldots\cdot F_i$ of~$\ov{\gN}^n(\pi_{\BV})$.

Consider an integral functional $F=\int f(\bx,[\bu])\,\Id\bx\in\ov{H}^n(\pi_{\BV})\subset\ov{\gM}^n(\pi_{\BV})$
and let $s\in\Gamma(\bpi\mathbin{{\times}_M}\Pi\widehat{\bpi})$. To inspect a response of the functional 
$F\colon s\mapsto F(s)\in\Bbbk$ 
to an infinitesimal shift of its argument $s$ by $k$ test sections $\de s_1,\ldots,\de s_k$ of the vector bundle
$T(\bpi\mathbin{{\times}_M}\Pi\widehat{\bpi})$, we take 
\[
\left.\frac{\Id}{\Id\veps'}\right|_{\veps'=0}
\left.\frac{\Id}{\Id\veps''}\right|_{\veps''=0}\cdots
\left.\frac{\Id}{\Id\veps^{(k)}}\right|_{\veps^{(k)}=0}
F(s+\veps'\cdot\overleftarrow{\delta s}_1+\ldots+\veps^{(k)}\cdot\overleftarrow{\delta s}_k).
\]
By using the chain rule we have that
\begin{multline*}
\left.\frac{\Id}{\Id\veps'}\right|_{\veps'=0}\cdots
\left.\frac{\Id}{\Id\veps^{(k)}}\right|_{\veps^{(k)}=0}
\left.\int_M\Id\bx\,f(\bx,[\bu])
{\bigr|}_{j^{\infty}_{\bx}(s)+\veps^{(i)}\cdot j^{\infty}_{\bby_i}(\overleftarrow{\delta s}_i)}\right|_{\bby_i:=\bx}=\\=
\int_M\Id\bx\left.\Biggl\{\sum_{j=1}^k\sum_{\substack{|\sigma_j|\ge0,\\i_j=1,\dots,
  {\rank(\bpi\mathbin{{\times}_M}\Pi\widehat{\bpi})}}}
\left.\left(\frac{\dd}{\dd\bby_1}\right)^{\sigma_1}\right|_{\bby_1}(\de s_1^{i_1})\cdot\ldots\cdot
\left.\left(\frac{\dd}{\dd\bby_k}\right)^{\sigma_k}\right|_{\bby_k}(\de s_k^{i_k})\cdot\right.\\
\cdot\left.
\left.\frac{\vec{\dd}^kf(\bx,[\bu])}{\dd u_{\sigma_1}^{i_1}\dots\dd u_{\sigma_k}^{i_k}}\right|_%
{j^{\infty}_{\bx}(s)}\Biggr\}\right|_{\bby_j\mathrel{{:}{=}}\bx}.
\end{multline*}
We emphasize that the derivatives of a density $f$ for $F$ refer to the base $M\ni\bx$ of that integral functional.
At the same time, the derivatives of fibrewise\/-\/constant test shifts~$\de s_j$ are taken with respect to their own domains of definition~$M\ni y_j$ so that
it is the numbers from~$\Bbbk$ which are communicated to the operation of multiplication $\cdot$ at the (very last)
moment of restriction to the diagonal~$\bby_j=\bx$.

To substantiate this mechanism, let us deal not with the \textit{coefficients} $\de s_j^{i_j}(\bby_j)$ of sections $\de s_j$
and not with the multiplication $\cdot$ in $\Bbbk$, but let us reformulate the picture in terms of (co)frame fields 
over~$M$ and their couplings. Recall the standard geometry of dual bases located in the fibres of a Whitney sum of two
vector bundles over points $\bx,\bby\in M$ of their base: if $\bx=\bby$, the coupling of two representatives in the fibres
works out an appropriate number from $\Bbbk$; otherwise, if $\bx\neq\bby$, the coupling is automatically zero.

Note that at each $\bby_i\in M$ the value 
$\bigl({\dd}/{\dd\bby_i}\bigr)^{\sigma_i}(\de s_i)(\bby_i)$ 
is an element of the vector space 
$T_{s(y_i)}\bigl(V_{\bby_i}\oplus\Pi V_{\bby_i}^\dagger\bigr)$
and the value
$\vec{\dd}f(\bx,[\bu])/\dd\bu_{\sigma}{\bigr|}_{j^{\infty}_{\bx}(s)}$ at $\bx\in M$ belongs to the dual vector space
$T^*_{s(x)}\bigl(V_{\bx}\oplus\Pi V_{\bx}^\dagger\bigr)$ over~$\bx$. 
We repeat that the derivatives
$\bigl({\dd}/{\dd\bby_i}\bigr)^{\sigma_i}$ along the base manifold $M\ni\bby_i$ act on the coefficients of sections but
not on the frame fields. This implies that the linear part of a response of $F$ to a shift of its argument away from $s$
is equal to
\begin{multline*}
\left.\frac{\Id}{\Id\veps'}\right|_{\veps'=0}\cdots\left.\frac{\Id}{\Id\veps^{(k)}}\right|_{\veps^{(k)}=0}
F(s+\veps'\cdot\overleftarrow{\delta s}_1+\ldots+\veps^{(k)}\cdot\overleftarrow{\delta s}_k)=\\=
\int_M\Id\bx\int_M\Id\bby_1\cdots\int_M\Id\bby_k\left\langle
\begin{matrix}
\bigl({\dd}/{\dd\bby_1}\bigr)^{\sigma_1}(\de s_1)(\bby_1),&\vec{\dd}/\dd\bu_{\sigma_1}\\
\vdots&\vdots\\
\bigl({\dd}/{\dd\bby_k}\bigr)^{\sigma_k}(\de s_k)(\bby_k),&\vec{\dd}/\dd\bu_{\sigma_k}
\end{matrix}
\bigl(f(\bx,[\bu])\bigr){\bigr|}_{j^{\infty}_{\bx}(s)}\right\rangle.
\end{multline*}
Each line within $\langle\,,\,\rangle$ contains a coupling of the dual bases; it forces the points $\bby_i$ and $\bx$ to
coincide at the moment of evaluation at $(1+k)$ sections (but not earlier). Integrating by parts, we obtain
(by using the agreement on the absence of boundary terms)
\begin{align}
{}\cong&{}\int_M\Id\bx\int_M\Id\bby_1\cdots\int_M\Id\bby_k
 \label{EqDeltaViaCoupling}\\
{}&\qquad
\left\langle\de s_1(\bby_1)\ldots\de s_k(\bby_k),
\left.\left(-\frac{\dd}{\dd\bby_1}\right)^{\sigma_1}\right|_{\bby_1}\cdots
\left.\left(-\frac{\dd}{\dd\bby_k}\right)^{\sigma_k}\right|_{\bby_k}
\left.\left(\frac{\overrightarrow{\dd^k}f(\bx,[\bu])}{\dd\bu_{\sigma_1}\dots\dd\bu_{\sigma_k}}\right)
\right|_{j^{\infty}_{\bx}(s)}\right\rangle.\notag
\end{align}
Clearly, before its evaluation at a concrete section $s\in\Gamma(\bpi\mathbin{{\times}_M}\Pi\widehat{\bpi})$ and at given shifts
$\de s_1,\dots,\de s_k$, the object we are dealing with exists as an element of top-cohomology group
$\ov{H}^{n(1+k)}(\pi_{\BV}\times T\pi_{\BV}\times\ldots\times T\pi_{\BV})$. 
In particular, various identities involving
variational derivatives (e.\,g., see~\eqref{EqLapSchouten} below) are equalities between the maps taking $s$ 
and all its variations $\de s_i$ to a number from~$\Bbbk$. In conclusion, all the couplings which force the congruence
of points $\bby_i$ and $\bx\in M$ are never performed in the course of verification of such identities within
$\ov{\gN}^n(\pi_{\BV})$. Moreover, whenever an extra derivative, corresponding to $\de s_{k+1}(\bby_{k+1})$, falls on an
object whose intrinsic geometry already encodes $k$ shifts of $s$, the $(k+1)$-th order derivative of the initial 
functional's density $f$ is still referred to its own base $M\ni\bx$; it does not feel the presence of any test shifts.
Consequently, each functional's derivative adds a new copy of its domain of definition and inserts an extra tangent bundle~$T\pi_{\BV}$ to the product $\pi_{\BV}\times T\pi_{\BV}\times\ldots\times T\pi_{\BV}$.

\begin{remark}
A one\/-\/step reduction to the diagonal $\bby=\bx$ has been illustrated in Example~\ref{ExOneStep} on p.~\pageref{ExOneStep}.
Another case --~with multiple functional derivatives~-- will be considered in Example~\ref{ExCounter2} in order to show 
an instance of the surgery technique for reconfigurations of the couplings; the geometry of that model is specific to the BV-setup of (anti)fields.
\end{remark}

\begin{remark}\label{RemSynonyms}
There exist integral functionals $F\in\ov{H}^n(\pi_{\BV})$ and
$G\in\ov{H}^{n+2n}(\pi_{\BV}\times\pi_{\BV}\times\pi_{\BV})$ and there are
(specially chosen, see below) test shifts $\de s_1$ and $\de s_2$ such that $F(s)=G(s,\de s_1,\de s_2)\in\Bbbk$ for every
$s\in\Gamma(\bpi\mathbin{{\times}_M}\Pi\widehat{\bpi})$ but such that $F$ and $G$ behave differently with respect to the Schouten
bracket or BV-Laplacian. For example, one can find $H\in\ov{H}^n(\pi_{\BV})$ such that $\lshad F,H\rshad\ne\lshad G,H\rshad$
as elements of $\ov{\gN}^n(\pi_{\BV})$ and $\lshad F,H\rshad(s)\ne\lshad G,H\rshad(s,\de s_1,\de s_2)\in\Bbbk$ or such that
$\Delta_{\BV}(F)\ne\Delta_{\BV}(G)$ and $(\Delta_{\BV}F)(s)\ne(\Delta_{\BV}G)(s,\de s_1,\de s_2)$.

Namely, let $\de s_1=(\de s_1^1,\dots,\de s_1^N;0,\dots,0)\in\Gamma\bigl(T(\bpi\mathbin{{\times}_M}\Pi\widehat{\bpi})\bigr)$, here
$N=\rank\bpi$, and let $\de s_2=(0,\dots,0;\de s_{2,1}^\dagger,\dots,\de s_{2,N}^\dagger)$ such that
$\de s^i_1\cdot\de s_{2,i}^\dagger=1$ for every $i$ (a possibility to have that normalization independent from a choice of local
coordinates is guaranteed by the composite geometry of fibres in the vector bundle $\bpi\mathbin{{\times}_M}\Pi\widehat{\bpi}$). Let
$f(\bx,[\bq],[\bq^\dagger])$ be a differential function on the jet space. Then the functionals
\begin{align*}
F&\mathrel{{:}{=}}\int\Id\bx\,\sum_{i=1}^N\left\{\de s_1^i(\bx)\cdot\de s_{2,i}^\dagger(\bx)\cdot
\frac{\lde^2 f(\bx,[\bq],[\bq^\dagger])}{\de\bq^i\de\bq_i^\dagger}\right\}\\
\intertext{and (see~\eqref{EqDeltaViaCoupling} above}
G&\mathrel{{:}{=}}\int\Id\bx\int\Id\bby\int\Id\bz\left\{\sum_{i=1}^N\sum_{|\sigma_1|,|\sigma_2|\geqslant0}
\de s_1^i(\bby)\cdot\de s_{2,i}^\dagger(\bz)\cdot\right.\\
&{}\qquad{}\cdot
\left.\left.\left(-\frac{\Id}{\Id\bby}\right)^{\sigma_1}\right|_{\bby}
\left.\left(-\frac{\Id}{\Id\bz}\right)^{\sigma_2}\right|_{\bz}
\frac{\overrightarrow{\dd^2}f(\bx,[\bq],[\bq^\dagger])}{\dd\bq^i_{\sigma_1}\dd\bq^\dagger_{i,\sigma_2}}\right\}\cdot
\de(\bby-\bx)\de(\bz-\bx)
\end{align*}
are indistinguishable as mappings to $\Bbbk$ for every $s\in\Gamma(\bpi\mathbin{{\times}_M}\Pi\widehat{\bpi})$, yet the difference in
the behaviour of $F$ and $G$ under basic operations of differential calculus provides a resolution to the obstructions
in Counterexample~\ref{counterexample}, c.\,f.\ Example~\ref{ExCounter2}. We see that unequal elements from $\ov{\gM}^n(\pi_{\BV})$
and $\ov{\gN}^n(\pi_{\BV})$ can determine synonymic maps of the space of sections. At the same time, elements 
$F\in\ov{\gM}^n(\pi_{\BV})$ could be viewed as primary functionals; their descendants $G\in\ov{\gN}^n(\pi_{\BV})$ retain 
a kind of memory of the way in which they have been derived from some primary objects in 
$\ov{H}^n(\pi_{\BV})\subseteq\ov{\gM}^n(\pi_{\BV})$. Moreover, such memory governs the behaviour of descendants in the
course of analytic operations (see Example~\ref{ExCounter2} below for a nontrivial synonym~$\Delta G$ of the zero functional).
\end{remark}

The above reasoning was unspecific to the BV\/-\/geometry; with elementary modifications in notation it could be applied, e.g., to a derivation of the Euler\/--\/Lagrange equations upon sections of a given vector bundle (see Example~\ref{ExOneStep}).
We now use the initial assumption that the BV-bundle $\bpi\mathbin{{\times}_M}\Pi\wh\bpi$ has a very special structure of its fibres. Namely, the dependent variables $\bq$ and $\bq^\d$ are grouped in two halves, each set's cardinality being equal to $m_0 + m_1 + \cdots + m_\lambda =: N$. The basic vectors $\vec{e}^{\,\d,1},\dots,\vec{e}^{\,\d,N}$ in the second summand of the fibre $V_x\oplus\Pi V^\d_x\simeq
T_{s(x)}\bigl(V_x\oplus\Pi V^\d_x\bigr)$ over each~$x\in M$ are dual to the respective vectors $\vec{e}_1,\dots,\vec{e}_N$, spanning $V_x$, under the $\Bbbk$-valued coupling $\langle \vec{e}_\alpha(x), \vec{e}^{\,\d,\beta}(y)\rangle = \delta_\alpha^\beta\cdot\delta(x-y)$. (Note that the coupling changes the ghost parity by convention; the vectors $\vec{e}^{\,\d,1},\dots,\vec{e}^{\,\d,N}$ themselves carry odd parity by the construction of $\Pi\wh\bpi$, but the Kronecker delta in the coupling is postulated to be ghost-parity even.) For any (smooth) choice of a frame field of basic vectors $\vec{e}_\alpha$ and $\vec{e}^{\,\d,\beta}$ over $M$, a local section $s \in \Gamma(\bpi\mathbin{{\times}_M}\Pi\wh\bpi)$ of the vector bundle at hand is then
\[
s = s^1\cdot\vec{e}_1 + \cdots + s^N\cdot\vec{e}_N + s^\d_1\cdot\vec{e}^{\,\d,1} + \cdots + s^\d_N\cdot\vec{e}^{\,\d,N},
\]
for some $\Bbbk$-valued functions $s^\alpha$ and $s^\d_\beta$ on $M$; their smoothness class is governed by an external agreement which is brought in by hand.

Continuing this line of reasoning, we see that the test shifts $\delta s \in \Gamma\bigl(T(\bpi\mathbin{{\times}_M}\Pi\wh\bpi)\bigr)$ of the functionals' arguments inherit this splitting: $\delta s = (\delta s^\alpha, \delta s^\d_\beta)$. Furthermore, one could either employ the shifts $\delta s_1 = (\delta s^\alpha, 0)$ or $\delta s_2 = (0, \delta s^\d_\beta)$ of pure ghost parities $(+)$ and $(-)$, respectively. Alternatively, one can split a given section $\delta s = (\delta s^\alpha, \delta s^\d_\beta) = (\delta s^\alpha, 0) + (0, \delta s^\d_\beta)$ and then use these two terms at different stages.
Let us point out a convenient normalization $\delta s^\alpha(x)\cdot\delta s^\d_\alpha(x) \equiv 1$ at all $x \in M$ and a fixed index $\alpha = 1,\dots,N$; this constraint is coordinate-independent owing to the duality of $V_x$ and $V^\d_x$. (The right hand side of the identity is the unit that could bear the unconventional odd ghost parity; we let it be even.)

We introduce the 
shorthand notation
\begin{align*}
\left.\frac{\overleftarrow{\delta f}(x,[\bq],[\bq^\dagger])}{\delta q^\alpha(y)}\right|_{j^\infty_x(s)} &=
\sum\limits_{|\sigma|\geqslant0}\Bigl(-\frac{\dd}{\dd y}\Bigr)^\sigma
\left.\left(\frac{\vec{\dd}f(x,[\bq],[\bq^\dagger])}{\dd q^\alpha_\sigma}
\right)\right|_{j^\infty_x(s)}\\
\intertext{and} 
\left.\frac{\overleftarrow{\delta f}(x,[\bq],[\bq^\dagger])}{\delta q^\dagger_\beta(y)}\right|_{j^\infty_x(s)} &=
\sum\limits_{|\sigma|\geqslant0}\Bigl(-\frac{\dd}{\dd y}\Bigr)^\sigma
\left.\left(\frac{\vec{\dd}f(x,[\bq],[\bq^\dagger])}{\dd q^\dagger_{\beta,\sigma}}
\right)\right|_{j^\infty_x(s)}
\end{align*}
for the components of pure ghost parity functional derivatives. At every point of the base $M$ and for a given section $s \in \Gamma(\bpi\mathbin{{\times}_M}\Pi\wh\bpi)$ and a functional $F \in \ol{\mathfrak{M}}^n(\pi_\BV)$, we have that 
\[
\left.\frac{\overleftarrow{\delta f}(x,[\bq],[\bq^\dagger])}{\delta q^\alpha(y)}\right|_{j^\infty_x(s)} \in T^*_{s(x)}(\Pi)V_x
   \qquad\text{and}\qquad 
\left.\frac{\overleftarrow{\delta f}(x,[\bq],[\bq^\dagger])}{\delta q^\dagger_\beta(y)}\right|_{j^\infty_x(s)} \in T^*_{s(x)}(\Pi)V^\dagger_x,
\]
forcing the points~$y$ and~$x$ to coincide;
the optional presence of the parity reversion operator indicates a possibly 
odd 
ghost parity~$\gh(F)$ of the functional itself.

\begin{definition}\label{DefBVLap}
For a fixed choice of the test shift $\delta s = (\delta s^\alpha, \delta s^\d_\beta) = (\delta s^\alpha, 0) + (0, \delta s^\d_\beta)$, 
the~\emph{functional BV-\/Laplacian} is the linear operator $\lap \colon \ol{\mathfrak{N}}^n(\pi_\BV) \to \ol{\mathfrak{N}}^n(\pi_\BV)$ defined for all $F \in \ol{\mathfrak{N}}^n(\pi_\BV)$ and $s \in \Gamma(\bpi\mathbin{{\times}_M}\Pi\wh\bpi)$ by the rule
\[
(\lap F)(s) = \left\langle\diftat{\varepsilon'}0\diftat{\varepsilon''}0 F(s + \varepsilon'\overleftarrow{\delta s}^\alpha\cdot\vec{e}_\alpha + \varepsilon''\overleftarrow{\delta s}^\d_\beta\cdot\vec{e}^{\,\d,\beta})\right\rangle,
\]
where the coupling goes as follows. By taking
\[
\int_M\dif x\int_M\dif y_1\int_M\Id y_2\,
\left\langle\delta s^\alpha(y_1) 
\delta s^\d_\beta(y_2), 
\frac{\overleftarrow{\delta^2}f(x,[\bq],[\bq^\dagger])}{\delta q^\alpha(y_1)\delta q^\d_\beta(y_2)}\bigg|_{j^\infty_x(s)}\right\rangle,
\]
we obtain an integrand with the couplings $\couple{\,,}\colon T_{s(y_1)}V_{y_1}\times T^*_{s(x)}(\Pi)V_x\to\Bbbk$ 
and $\couple{\,,}\colon T_{s(y_2)}\Pi V^\d_{y_2}\times T^*_{s(x)}\Pi V^\dagger_x \to \Bbbk$. 
Let these couplings be reattached to $\couple{\,,}\colon T_{s(y_1)}V_{y_1}\times T_{s(y_2)}\Pi V^\d_{y_2}\to\Bbbk$ and $\couple{\,,}\colon T^*_{s(x)}\Pi V_x\times T^*_{s(x)}\Pi V^\dagger_x\to\Bbbk$; in fact, this surgery algorithm \emph{is} the operational definition. The coupling between dual objects~$\vec{e}_\alpha(y)$ and~$\vec{e}^{\,\d,\beta}(x)$ yields non-zero only if~$y=x$, which makes the procedure possible and we have
\begin{align*}
(\lap F)(s) &= \sum_{\alpha,\beta=1}^N
\iiint_M\dif x\,\dif y_1\,\Id y_2\ %
\delta s^\alpha(y_1)\,\delta_\alpha^\beta\,
\delta s^\d_\beta(y_2)\cdot
\frac{\overleftarrow{\delta^2}f(x,[\bq],[\bq^\dagger])}
{\delta q^\alpha(y_1)\delta q^\d_\beta(y_2)}\bigg|_{j^\infty_x(s)}
\cdot\delta(y_1-x)\cdot(y_2-x),
\intertext{which results in the conventional formula with a summation over the diagonal in $V\oplus\Pi V^\d$:}
&= \sum_{\alpha=1}^N\int_M\dif x\,\delta s^\alpha(x)\,\delta s^\d_\alpha(x)\,\frac{\overleftarrow{\delta^2}f(x,[\bq],[\bq^\dagger])}
{\delta q^\alpha \delta q^\d_\alpha }\bigg|_{j^\infty_x(s)}.
\end{align*}
In particular, if $\delta s^\alpha(x)\,\delta s^\d_\alpha(x) \equiv 1 \in \Bbbk$ (or $\equiv 1^{(\Pi)} \in \Pi\Bbbk$, which is unusual\footnote{The choice of $1^{(\Pi)} = \Pi(1)$ in the coupling preserves the ghost parity of the functional $F$ even though the second functional derivative in the definition of $\lap(F)$ does not.}), then we say that $\lap$ is the~\emph{normalized} functional BV-Laplacian.

In what follows we shall use the normalized Laplacian by default.
\end{definition}

\begin{remark}
The coupling of canonically dual components of a section $\delta s$ is performed~\emph{last} in this operational definition. Until that moment the derivatives $\dift{\varepsilon'} \longleftrightarrow \overleftarrow\delta/\delta q^\alpha(y_1)$ and $\dift{\varepsilon''} \longleftrightarrow \overleftarrow\delta/\delta q^\d_\beta(y_2)$ refer to different copies of the base $M$ and thus they can be freely swapped.
\end{remark}

The normalized functional BV-Laplacian~$\lap$ is a constituent part of a broader technique which appeals to the use of test shifts~$ \delta s \in \Gamma\bigl(T(\bpi\mathbin{{\times}_M}\Pi\wh\bpi)\bigr)$ and functional derivatives~$\diftat\varepsilon0 F(s+\varepsilon\cdot\delta s)$ for $F \in \ol{\mathfrak{N}}^n(\pi_\BV)$ instead of a use of Cartan differentials $\delta\bq$ or $\delta\bq^\d$ and of the variations $\overleftarrow{\delta_\bq}F$ or $\overleftarrow{\delta_{\bq^\d}}F$, respectively. The normalized operator~$\lap$ retains the properties of~$\lap_\BV$ on jet spaces: namely, its linearity over $\Bbbk$ on $\ol{\mathfrak{N}}^n(\pi_\BV)$, the product rule formula~\eqref{eq:laplacian-on-product}, and Proposition~\ref{thm:LaplacianDifferential}. Yet --~by a transition to the spaces of sections $\Gamma(\bpi\mathbin{{\times}_M}\Pi\wh\bpi)$ and $\Gamma\bigl(T(\bpi\mathbin{{\times}_M}\Pi\wh\bpi)\bigr)$~-- we gain a greater flexibility in our functional approach to the variations. Indeed, the (graded) permutability of functional derivatives is the key point: it resolves the obstructions which were illustrated by Counterexample~\ref{counterexample}.

From now on, we employ 
functional derivatives in the construction of the BV-Laplacian $\lap$ and the Schouten bracket $\schouten{\,,}$, for which the algorithm of reattachments and the delta function mechanisms were de-facto pronounced in section~\ref{SecSchouten}. The bracket $\schouten{\,,}$ thus also retains all its properties. On top of that, we postulate the conventional normalization $\delta s^\alpha(x)\,\delta s^\d_\alpha(x) \equiv 1 \in \Bbbk$ at each $\alpha$  and each $x \in M$, so that the resulting expressions contain neither parity-swappings nor any traces of the test-shift coefficients of the couplings $\langle \vec{e}_\alpha(x), \vec{e}^{\,\d,\beta}(y)\rangle = \delta_\alpha^\beta\cdot\delta(x-y)$ for the frames in the fibres of the BV-bundle.

\begin{example}
\label{ExCounter2}
Consider the integral functionals $F=\int q^\dagger q\, q_{x_1x_1}\,\Id x_1$ and $G=\int q^\dagger_{x_2x_2}\cos q\,\Id x_2$, c.f.\ Counterexample~\ref{counterexample} on p.~\pageref{counterexample}. Let us show that equality $\Delta\bigl(\lshad F,G\rshad\bigr)=\lshad\Delta F,G\rshad+\lshad F,\Delta G\rshad$ \emph{does hold} for these functionals whenever one uses the operational definitions of the Schouten bracket and Batalin\/--\/Vilkovisky Laplacian, hence making a proper distinction between the bases of bundles in the course of integrations by parts. (For the sake of brevity, we do not indicate the base points' congruences which occur due to the couplings. Still we explicitly refer the functionals' densities to their own domains with coordinates~$x_1$ and~$x_2$.)

We have
\begin{multline*}
\lshad F,G\rshad = \iiiint \Id x_1\Id x_2\Id y_1\Id y_2
\Bigl\langle
\Bigl(\underbrace{q^\dagger q_{xx}+\tfrac{\Id^2}{\Id y_1^2}(q^\dagger q)}_{x_1}
\Bigr) \cdot \underbrace{\langle\delta q(y_1),\delta q^\dagger(y_2)\rangle}_{+1} \cdot \tfrac{\Id^2}{\Id y_2^2}\bigl(\underbrace{\cos q}_{x_2}\bigr)
\Bigr\rangle \\
{}+ \iiiint \Id x_1\Id x_2\Id y_1\Id y_2 \Bigl\langle
\bigl(\underbrace{qq_{xx}}_{x_1}\bigr) \cdot \underbrace{\langle\delta q^\dagger(y_1),\delta q(y_2)\rangle}_{-1} \cdot \bigl(\underbrace{-q^\dagger_{xx}\,\sin q}_{x_2}\bigr)\Bigr\rangle.
\end{multline*}
Therefore, one side of the expected equality is
\begin{align*}
\Delta&\bigl(\lshad F,G\rshad\bigr)=\int\Id z_1\int\Id z_2\int\Id x_1\int\Id x_2\int\Id y_1\int\Id y_2\,\underbrace{\langle\delta q(z_1),\delta q^\dagger(z_2)\rangle}_{+1}\cdot
\underbrace{\langle\delta q(y_1),\delta q^\dagger(y_2)\rangle}_{+1}\\
{}&{}\quad
{}\cdot\Bigl\langle
\tfrac{\Id^2}{\Id z_1^2}\underbrace{(1)}_{x_1}\cdot\tfrac{\Id^2}{\Id y_2^2}\bigl(\underbrace{\cos q}_{x_2}\bigr) 
+ \underline{\underbrace{q_{xx}}_{x_1}\cdot\tfrac{\Id}{\Id y_2^2}\bigl(\underbrace{-\sin q}_{x_2}\bigr)} 
+ \tfrac{\Id^2}{\Id y_1^2}\underbrace{(1)}_{x_1}\cdot\tfrac{\Id^2}{\Id y_2^2}\bigl(\underbrace{\cos q}_{x_2}\bigr) 
+ \underline{\underline{\tfrac{\Id^2}{\Id y_1^2}\underbrace{(q)}_{x_1}\cdot\tfrac{\Id^2}{\Id y_2^2}\bigl(\underbrace{-\sin q}_{x_2}\bigr)}} 
\Bigr\rangle\\
{}&+
\int\Id z_1\int\Id z_2\int\Id x_1\int\Id x_2\int\Id y_1\int\Id y_2\,
\underbrace{\langle\delta q(z_1),\delta q^\dagger(z_2)\rangle}_{+1}\cdot{}\\
{}&{}\quad
\Bigl\langle
\underline{\underbrace{q_{xx}}_{x_1}\cdot\tfrac{\Id^2}{\Id z_2^2}\bigl(\underbrace{-\sin q}_{x_2}\bigr)}
+\underline{\underline{\tfrac{\Id^2}{\Id z_1^2}\underbrace{(q)}_{x_1}\cdot\tfrac{\Id^2}{\Id z_2^2}\bigl(\underbrace{-\sin q}_{x_2}\bigr)}}
+\bigl(\underbrace{qq_{xx}}_{x_1}\bigr)\cdot\tfrac{\Id^2}{\Id z_2^2}\bigl(\underbrace{-\cos q}_{x_2}\bigr)
\Bigr\rangle \cdot \underbrace{\langle\delta q^\dagger(y_1),\delta q(y_2)\rangle}_{-1}.
\end{align*}
The respective pairs of underlined terms cancel out and there remains only
\begin{equation}\label{EqLeft}
{}=\int{\cdots}\int\Id z_1\Id z_2\Id x_1\Id x_2\Id y_1\Id y_2
\underbrace{\langle\delta q(z_1),\delta q^\dagger(z_2)\rangle}_{+1}\cdot
\bigl\langle\bigl(\underbrace{qq_{xx}}_{x_1}\bigr)\cdot\tfrac{\Id^2}{\Id z_2^2}\bigl(\underbrace{-\cos q}_{x_2}\bigr)\bigr\rangle\cdot
\underbrace{\langle\delta q^\dagger(y_1),\delta q(y_2)\rangle}_{-1}.
\end{equation}
On the other hand, we obtain that
\[
\Delta F=\iiint\Id z_1\Id z_2\Id x_1 
\underbrace{\langle\delta q(z_1),\delta q^\dagger(z_2)\rangle}_{+1}\cdot
\bigl\langle\underbrace{1\cdot q_{xx}}_{x_1} + 1\cdot\tfrac{\Id^2}{\Id z_1^2}\underbrace{(q)}_{x_1}\bigr\rangle,
\]
which yields
\begin{multline*}
\lshad\Delta F,G\rshad=\int\Id z_1\int\Id z_2\int\Id x_1\int\Id x_2\int\Id y_1\int\Id y_2\,
\underbrace{\langle\delta q(z_1),\delta q^\dagger(z_2)\rangle}_{+1}\cdot{}\\
\Bigl\langle
\Bigl(\tfrac{\Id^2}{\Id y_1^2}\underbrace{(1)}_{x_1} +
\tfrac{\Id^2}{\Id z_1^2}\underbrace{(1)}_{x_1}\Bigr)\cdot
\tfrac{\Id^2}{\Id y_2^2}\bigl(\underbrace{\cos q}_{x_2}\bigr)
\Bigr\rangle\cdot
\underbrace{\langle\delta q(y_1),\delta q^\dagger(y_2)\rangle}_{+1}=0.
\end{multline*}
However, from the fact that the other BV-\/Laplacian,
\[
\Delta G=\iiint\Id z_1\Id z_2\Id x_2 
\underbrace{\langle\delta q(z_1),\delta q^\dagger(z_2)\rangle}_{+1}\cdot
\bigl\langle 1\cdot\tfrac{\Id^2}{\Id z_2^2}\bigl(\underbrace{-\sin q}_{x_2}\bigr)\bigr\rangle,
\]
does not contain~$q^\dagger$ so that the first half of the Schouten bracket~$\lshad F,\Delta G\rshad$ drops out, we deduce that
\begin{multline}
\lshad F,\Delta G\rshad=\int{\cdots}\int \Id z_1\Id z_2\Id x_1\Id x_2\Id y_1\Id y_2
\underbrace{\langle\delta q(z_1),\delta q^\dagger(z_2)\rangle}_{+1}\cdot{}\\
\bigl\langle\bigl(\underbrace{qq_{xx}}_{x_1}\bigr)\cdot\tfrac{\Id^2}{\Id z_2^2}\bigl(\underbrace{-\cos q}_{x_2}\bigr)\bigr\rangle\cdot
\underbrace{\langle\delta q^\dagger(y_1),\delta q(y_2)\rangle}_{-1}.
\label{EqRight}
\end{multline}
Consequently, the two sides of~\eqref{EqNotHolds}, namely, $\Delta\bigl(\lshad F,G\rshad\bigr)$ in~\eqref{EqLeft} and $\lshad\Delta F,G\rshad+\lshad F,\Delta G\rshad$ in~\eqref{EqRight}, match perfectly for the functionals~$F$ and~$G$ at~hand.
\end{example}

\begin{proposition}\label{PropBaseLapSchouten}
Let $F\in\overline{H}^{n(1+k)}\bigl(\pi_{\BV}\times T\pi_{\BV}\times\ldots\times T\pi_{\BV}\bigr)$ and $G\in\overline{H}^{n(1+\ell)}\bigl(\pi_{\BV}\times T\pi_{\BV}\times\ldots\times T\pi_{\BV}\bigr)$ be two integral functionals\textup{;} here $k,\ell\geqslant0$. Then
\begin{equation}\label{EqLapSchouten}
\lap(\schouten{F,G}) = \schouten{\lap F, G} + (-)^{\gh(F)-1}\schouten{F,\lap G}.
\end{equation}
\end{proposition}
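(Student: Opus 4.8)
The plan is to reduce to building blocks and then carry out, term by term, the same computation that establishes Proposition~\ref{ThLapSchouten} for the conventional Laplacian; the decisive difference is that on $\ol{\gN}^n(\pi_{\BV})$ the permutability of iterated functional derivatives --- which that computation requires --- is a genuine fact and not an imposed regularization. By $\Bbbk$-bilinearity and the product rules of Theorems~\ref{thm:LeibnizSchouten} and~\ref{thm:LeibnizLaplacian} (both retained by the functional $\schouten{\,,}$ and $\lap$), it suffices to prove~\eqref{EqLapSchouten} when $F$ and $G$ are single integral functionals; and since $\lap$ ignores the spectator factors $T\pi_{\BV}$, one may as well take $F=\int f(\bx,[\bu])\,\Id\bx$ and $G=\int g(\bx',[\bu'])\,\Id\bx'$ over \emph{private} copies of the base. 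By Corollary~\ref{thm:Schouten} and formula~\eqref{eq:laplacian-in-coordinates}, computing $\lap(\schouten{F,G})$ then amounts to applying $\sum_\gamma(\overleftarrow\delta/\delta q^\gamma)\circ(\overleftarrow\delta/\delta q^\d_\gamma)$ to the density $\sum_\alpha\bigl(\rdifv[f]{q^\alpha}\cdot\ldifv[g]{q^\d_\alpha}-\rdifv[f]{q^\d_\alpha}\cdot\ldifv[g]{q^\alpha}\bigr)$, where each of the four functional derivatives involved carries, by construction of $\ol{\gN}^n(\pi_{\BV})$, its own fresh copy of the base~$M$.

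I would then distribute the two new derivatives over the product structure of this density and sort the result into three groups, according to whether both new derivatives land on the $f$-factors, both on the $g$-factors, or the group is ``mixed''. In the first group, after one slides $\sum_\gamma(\overleftarrow\delta/\delta q^\gamma)\circ(\overleftarrow\delta/\delta q^\d_\gamma)$ past the Schouten derivative of $f$ --- which is legitimate precisely because these derivatives refer to different base copies --- it reassembles into the density of $\lap F$ paired with the corresponding derivative of $g$; summing the two Schouten terms, this group is exactly $\schouten{\lap F,G}$. The second group is handled symmetrically and produces $(-)^{\gh(F)-1}\schouten{F,\lap G}$, the sign arising from transporting the odd-parity variations of $\bq^\d$ through the $\gh(F)$-graded factor $F$, exactly as in the proof of Theorem~\ref{thm:LeibnizLaplacian} and consistently with the shifted-graded skew-symmetry of $\schouten{\,,}$.

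The crux --- and the step I expect to be the genuine obstacle --- is that the mixed group vanishes. Because the two new derivatives, the two derivatives inherited from $\schouten{F,G}$, and the densities $f,g$ all refer to pairwise distinct copies of the base, every permutation of functional derivatives needed here is a graded transposition yielding at most a sign, and in particular no annihilation of the type $\delta/\delta q^\beta\circ D_\tau=0$ that invalidated the jet-space computation in Counterexample~\ref{counterexample} can occur. Granting this, some mixed terms die individually by the symmetric-versus-antisymmetric mechanism of Proposition~\ref{thm:LaplacianDifferential}: a double functional derivative of $f$ (or of $g$) in two even (respectively two odd) BV-variables is graded-symmetric (respectively antisymmetric) in the two contracted indices, so its contraction against the diagonal couplings $\langle\delta q^\alpha,\delta q^\d_\beta\rangle$ is zero; the surviving mixed terms then cancel in pairs after relabelling the summation indices $\alpha\leftrightarrow\gamma$, using the minus sign already built into $\schouten{\,,}$. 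Assembling the three groups gives~\eqref{EqLapSchouten}. As a cross-check one may re-run this bookkeeping on the concrete functionals of Example~\ref{ExCounter2}, where the obstruction of Counterexample~\ref{counterexample} is seen to disappear; alternatively, since $\lap(1)=0$ and the product rule~\eqref{eq:laplacian-on-product} hold on $\ol{\gN}^n(\pi_{\BV})$, identity~\eqref{EqLapSchouten} is \emph{equivalent} to the vanishing $\lap^2=0$ on products of building blocks --- the functional counterpart of Proposition~\ref{thm:LaplacianDifferential} --- which could be proved directly instead, again thanks only to the now-unrestricted permutability of functional derivatives.
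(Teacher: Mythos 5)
Your proposal is correct and follows essentially the same route as the paper's proof: expand $\lap(\schouten{F,G})$ by the Leibniz rule, use the (graded) permutability of functional derivatives --- legitimate because each derivative refers to its own copy of the base --- to reassemble $\schouten{\lap F,G}$ and $(-)^{\gh(F)-1}\schouten{F,\lap G}$, and kill the four mixed terms, two by the symmetric-versus-antisymmetric contraction over the indices and two by pairwise cancellation after relabelling. The only minor difference is bookkeeping: the paper obtains the sign in that last cancellation from reversing the arrows on the functional derivatives, which yields $(-)^{\gh(F)-1}$, rather than attributing it solely to the minus sign built into $\schouten{\,,}$, but the mechanism is the same.
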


\begin{proof}
In 
the proof 
we shall not write the evaluations at $x_1$,\ $x_2$,\ $y_1$,\ $y_2$,\ $z_1$, and~$z_2$ running over the respective copies of~$M$. Likewise, for notational convenience we also do not write the integrations 
$\int{\cdots}\int \Id z_1\Id z_2\Id x_1\Id x_2\Id y_1\Id y_2$ (see Example~\ref{ExCounter2} above). We have
\begin{align*}
\lap(\schouten{F,G})
={}&\ldifv{q^\alpha}\ldifv{q^\d_\alpha}\left(\rdifv[F]{q^\beta}\ldifv[G]{q^\d_\beta} - \rdifv[F]{q^\d_\beta}\ldifv[G]{q^\beta}\right) \\
={}& \ldifv{q^\alpha}\left(
   \frac{\overleftarrow\delta\overrightarrow\delta F}{\delta q^\d_\alpha\,\delta q^\beta}\ldifv[G]{q^\d_\beta}
   +(-)^{\gh(F)}\rdifv[F]{q^\beta}\frac{\overleftarrow\delta\overleftarrow\delta G}{\delta q^\d_\alpha\,\delta q^\d_\beta}\right. 
\left.{}-\frac{\overleftarrow\delta\overrightarrow\delta F}{\delta q^\d_\alpha\,\delta q^\d_\beta}\ldifv[G]{q^\beta}
   -(-)^{\gh(F)-1} \rdifv[F]{q^\d_\beta}\frac{\overleftarrow\delta\overleftarrow\delta G}{\delta q^\d_\alpha\,\delta q^\beta}
   \right) \displaybreak[0]\\
={}& \frac{\overleftarrow\delta\overleftarrow\delta\overrightarrow\delta F}{\delta q^\alpha\,\delta q^\d_\alpha\,\delta q^\beta}\ldifv[G]{q^\d_\beta}
   + \frac{\overleftarrow\delta\overrightarrow\delta F}{\delta q^\d_\alpha\,\delta q^\beta}\frac{\overleftarrow\delta\overleftarrow\delta G}{\delta q^\alpha\,\delta q^\d_\beta} \\
   &{} +(-)^{\gh(F)}\frac{\overleftarrow\delta\overrightarrow\delta F}{\delta q^\alpha\,\delta q^\beta}\frac{\overleftarrow\delta\overleftarrow\delta G}{\delta q^\d_\alpha\,\delta q^\d_\beta}
   +(-)^{\gh(F)}\rdifv[F]{q^\beta}\frac{\overleftarrow\delta\overleftarrow\delta\overleftarrow\delta G}{\delta q^\alpha\,\delta q^\d_\alpha\,\delta q^\d_\beta} \\
   &{} -\frac{\overleftarrow\delta\overleftarrow\delta\overrightarrow\delta F}{\delta q^\alpha\,\delta q^\d_\alpha\,\delta q^\d_\beta}\ldifv[G]{q^\beta}
   -\frac{\overleftarrow\delta\overrightarrow\delta F}{\delta q^\d_\alpha\,\delta q^\d_\beta}\frac{\overleftarrow\delta\overleftarrow\delta G}{\delta q^\alpha\,\delta q^\beta} \\
   &{} -(-)^{\gh(F)-1} \frac{\overleftarrow\delta\overrightarrow\delta F}{\delta q^\alpha\,\delta q^\d_\beta}\frac{\overleftarrow\delta\overleftarrow\delta G}{\delta q^\d_\alpha\,\delta q^\beta}
   -(-)^{\gh(F)-1} \rdifv[F]{q^\d_\beta}\frac{\overleftarrow\delta\overleftarrow\delta\overleftarrow\delta G}{\delta q^\alpha\,\delta q^\d_\alpha\,\delta q^\beta}.
\end{align*}
As a direct consequence of our approach, 
the 
functional derivatives commute -- or anticommute if they are both taken with respect to odd\/-\/parity coordinates. Applying this observation on the first and fifth terms of the calculation above, we 
immediately 
form $\schouten{\lap F, G}$. Similarly, the fourth term (in which we have to swap $\delta q^\d_\alpha$ and $\delta q^\d_\beta$ in the denominator, yielding an extra sign) and last term combine into $(-)^{\gh(F)-1}\schouten{F,\lap G}$. Thus we obtain
\begin{align}
\lap(\schouten{F,G})
={}&\frac{\overleftarrow\delta\overrightarrow\delta F}{\delta q^\d_\alpha\,\delta q^\beta}\frac{\overleftarrow\delta\overleftarrow\delta G}{\delta q^\alpha\,\delta q^\d_\beta}
+(-)^{\gh(F)}\frac{\overleftarrow\delta\overrightarrow\delta F}{\delta q^\alpha\,\delta q^\beta}\frac{\overleftarrow\delta\overleftarrow\delta G}{\delta q^\d_\alpha\,\delta q^\d_\beta} 
 - \frac{\overleftarrow\delta\overrightarrow\delta F}{\delta q^\d_\alpha\,\delta q^\d_\beta}\frac{\overleftarrow\delta\overleftarrow\delta G}{\delta q^\alpha\,\delta q^\beta} \notag\\
{}&{}
-(-)^{\gh(F)-1} \frac{\overleftarrow\delta\overrightarrow\delta F}{\delta q^\alpha\,\delta q^\d_\beta}\frac{\overleftarrow\delta\overleftarrow\delta G}{\delta q^\d_\alpha\,\delta q^\beta} 
+\schouten{\lap F, G} + (-)^{\gh(F)-1}\schouten{F,\lap G}. 
\label{eq:conventional-laplacian-schouten}
\end{align}
In the second term, the left factor is symmetric under an exchange of $\alpha$ and $\beta$, while the right factor is antisymmetric; therefore it is zero. The third term term vanishes by a similar reasoning. As to the first and fourth terms, if we set the direction of the arrows such that they all point towards the left, then the first term remains the same while the last term picks up an extra sign $(-)^{\gh(F)-1}$, after which it cancels against the first one. Thus, all four terms vanish and the claim follows.
\end{proof}

\begin{theorem}\label{ThLapSchouten}
Let $F, G \in \ol{\mathfrak{N}}^n(\pi_\BV)$ be two functionals. 
The 
Batalin\/--\/Vilkovisky Laplacian~$\lap$ satisfies
\[
   \lap(\schouten{F,G}) = \schouten{\lap F, G} + (-)^{\gh(F)-1}\schouten{F,\lap G}.
\]
\end{theorem}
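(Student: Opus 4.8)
The plan is to reduce the statement to the building\/-\/block case and then close it by a short algebraic manipulation. Since $\lap$ is $\Bbbk$-\/linear and $\schouten{\,,}$ is $\Bbbk$-\/bilinear, and every element of $\ol{\mathfrak{N}}^n(\pi_\BV)$ is a finite sum of ghost\/-\/homogeneous terms $F_1\cdots F_{i'}$ with $F_j\in\ol{H}^{n(1+k_j)}(\pi_\BV\times T\pi_\BV\times\ldots\times T\pi_\BV)$, I would first reduce to ghost\/-\/homogeneous $F$ and $G$. The two facts recalled after Definition~\ref{DefBVLap} are then all that is needed: the normalized functional Laplacian still obeys the product rule~\eqref{eq:laplacian-on-product} and is still a differential, $\lap^2=0$, on $\ol{\mathfrak{N}}^n(\pi_\BV)$. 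I should check the second of these on products: by the device of Remark~\ref{remark:pi-products} a ghost\/-\/homogeneous functional is an integral functional over a product of copies of $\pi_\BV$ and $T\pi_\BV$, so the symmetry/antisymmetry argument of Proposition~\ref{thm:LaplacianDifferential} applies verbatim -- here it matters that the functional derivatives entering $\lap$ now sit over distinct copies of~$M$ and genuinely commute, which is exactly what failed in Counterexample~\ref{counterexample}.

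With these in hand the argument is the standard Batalin\/--\/Vilkovisky\/-\/algebra computation. Formula~\eqref{eq:laplacian-on-product} rewrites the antibracket as the failure of $\lap$ to be a derivation of the pointwise product,
\[
\schouten{F,G}=(-)^{\gh(F)}\bigl(\lap(F\cdot G)-\lap(F)\cdot G-(-)^{\gh(F)}\,F\cdot\lap(G)\bigr).
\]
I would apply $\lap$ to this identity, drop the term $\lap^2(F\cdot G)=0$, and re\/-\/expand $\lap(\lap(F)\cdot G)$ and $\lap(F\cdot\lap(G))$ by~\eqref{eq:laplacian-on-product} once more, using $\gh(\lap F)=\gh(F)-1$ together with $\lap^2(F)=\lap^2(G)=0$. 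The two copies of the mixed product $\lap(F)\cdot\lap(G)$ then cancel, and what survives reassembles precisely into $\schouten{\lap F,G}+(-)^{\gh(F)-1}\schouten{F,\lap G}$; only graded commutativity and associativity of the product on $\ol{\mathfrak{N}}^n(\pi_\BV)$ enter. This route does not even invoke Proposition~\ref{PropBaseLapSchouten} -- it reproves it. An equivalent, slightly more pedestrian route is an induction on the number $i'+i''$ of factors in $F=F_1\cdots F_{i'}$, $G=G_1\cdots G_{i''}$, peeling off one building block at a time via the Leibniz rule for $\schouten{\,,}$, Theorem~\ref{thm:LeibnizSchouten}, and the product rule for $\lap$, Theorem~\ref{thm:LeibnizLaplacian}, the base case $i'=i''=1$ being Proposition~\ref{PropBaseLapSchouten}.

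The only genuine obstacle is the sign bookkeeping, not the structural reductions: one must track the $\BBZ_2$\/-\/ghost signs picked up when the odd variations $\delta\bq^\d$ are transported past factors and when the canonically dual halves of a test shift are finally coupled -- exactly as in the two displays in the proof of Proposition~\ref{PropBaseLapSchouten} -- and verify that the leftover terms (the analogues of the first through fourth terms of~\eqref{eq:conventional-laplacian-schouten}) vanish by the now\/-\/legitimate symmetry in the summation indices $\alpha$ and $\beta$. I expect no new conceptual issue: the whole point of enlarging $\ol{\mathfrak{M}}^n(\pi_\BV)$ to $\ol{\mathfrak{N}}^n(\pi_\BV)$ was to make those higher functional derivatives commute, which is precisely the step that was illegitimate in the jet\/-\/space picture and caused Counterexample~\ref{counterexample}.
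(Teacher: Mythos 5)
Your second, ``pedestrian'' route is essentially the paper's own proof (induction on the number of building blocks, base case Proposition~\ref{PropBaseLapSchouten}, inductive step via Theorems~\ref{thm:LeibnizSchouten} and~\ref{thm:LeibnizLaplacian}) -- with the caveat that the leftover terms in that inductive step only balance thanks to the shifted-graded Jacobi identity~\eqref{EqJacobiSchouten}, which you do not mention. Your primary route is genuinely different and more algebraic: applying $\lap$ to the product rule~\eqref{eq:laplacian-on-product} and re-expanding $\lap(\lap F\cdot G)$ and $\lap(F\cdot\lap G)$ gives, after the two copies of $\lap F\cdot\lap G$ cancel,
\[
\lap^2(F\cdot G)-\lap^2F\cdot G-F\cdot\lap^2G
=(-)^{\gh(F)}\Bigl(\lap\schouten{F,G}-\schouten{\lap F,G}-(-)^{\gh(F)-1}\schouten{F,\lap G}\Bigr),
\]
so the theorem follows once $\lap^2$ annihilates $F$, $G$ \emph{and the product} $F\cdot G$. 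This is where all the weight of your argument sits, and where you must be careful not to be circular: what you need is $\lap^2=0$ on products of building blocks, i.e.\ Theorem~\ref{ThBVDifferential}, not merely Lemma~\ref{LBVOperatorDifferential}; but in the paper Theorem~\ref{ThBVDifferential} is deduced \emph{from} Theorem~\ref{ThLapSchouten}, so it cannot simply be cited. Nor is this a formality -- the displayed identity shows that in the jet-space picture the failure of $\lap_{\BV}^2$ on products is exactly equivalent to Counterexample~\ref{counterexample}, even though Proposition~\ref{thm:LaplacianDifferential} holds there on single blocks. You do flag this and sketch the right repair: in $\ol{\mathfrak{N}}^n(\pi_\BV)$ the iterated functional derivatives graded-commute, so $\lap^2$ of \emph{any} admissible functional is the contraction of the total fourth functional derivative, the two $\bq$-derivatives being symmetric and the two $\bq^\d$-derivatives antisymmetric in the summed index, hence zero as in Lemma~\ref{LBVOperatorDifferential}. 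Just note that ``verbatim'' is slightly optimistic: $\lap$ on a product is the full second functional derivative, including the Leibniz cross terms that produce the Schouten bracket, so the cancellation must be run for the total fourth derivative of $F\cdot G$ rather than factorwise. With that step written out, your route simultaneously proves Theorem~\ref{ThLapSchouten} and Theorem~\ref{ThBVDifferential} and makes Proposition~\ref{PropBaseLapSchouten} a corollary -- a tidier logical order than the paper's induction -- at the price of concentrating all the analytic content in the single claim that higher functional derivatives commute on products.
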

\noindent In other words, the operator~
$\lap$ is a graded derivation of the variational Schouten bracket~$\schouten{\,,}$.

\begin{proof}
We prove this by induction over the number of building blocks in each argument of the Schouten bracket in the left hand side of~\eqref{EqLapSchouten}. If $F$ and $G$ both belong to $\ol{H}^*(\pi_\BV\times T\pi_\BV\times\ldots T\pi_\BV)$, then Proposition~\ref{PropBaseLapSchouten} states the assertion, which is the base of the induction. To make an inductive step, without loss of generality let us assume that the second argument of $\schouten{\,,}$ in~\eqref{EqLapSchouten} is a product of two elements from~$\ol{\mathfrak{N}}^n(\pi_\BV)$, each of them containing less multiples from~$\ol{H}^*(\pi_\BV\times T\pi_\BV\times\ldots T\pi_\BV)$ than the product. Denote such factors by $G$ and $H$ and recall that by Theorem~\ref{thm:LeibnizSchouten},
\[
\schouten{F,G\cdot H} = \schouten{F,G}\cdot H+ (-)^{(\gh(F)-1)\cdot\gh(G)}G\cdot\schouten{F,H}.
\]
Therefore, using Theorem~\ref{thm:LeibnizLaplacian} we have that
\begin{align}
\lap(\schouten{&F,G\cdot H})\nonumber\\
={}& \lap(\schouten{F,G})\cdot H + (-)^{\gh(F)+\gh(G)-1}[\![\schouten{F,G},H]\!] + (-)^{\gh(F)+\gh(G)-1}\schouten{F,G}\cdot\lap H\nonumber\\
& + (-)^{(\gh(F)-1)\gh(G)}\left(\lap G\cdot\schouten{F,H} + (-)^{\gh(G)}[\![ G,\schouten{F,H}]\!] + (-)^{\gh(G)}G\cdot\lap(\schouten{F,H})\right).\nonumber
\intertext{Using the inductive hypothesis in the first and last terms of the right hand side in the above formula, we continue the equality and obtain}
={}&\lap(\schouten{F,G})\cdot H + (-)^{\gh(F)-1}\schouten{F,\lap G}\cdot H + (-)^{\gh(F)+\gh(G)-1}[\![\schouten{F,G},H]\!] \nonumber\\
&+ (-)^{\gh(F)\gh(G)}[\![ G,\schouten{F,H}]\!] + (-)^{\gh(F)+\gh(G)-1}\schouten{F,G}\cdot\lap H \nonumber\\
&+ (-)^{(\gh(F)-1)\gh(G)}\lap G\cdot\schouten{F,H} + (-)^{\gh(F)\gh(G)}G\cdot\schouten{\lap F,H} \nonumber\\
& + (-)^{\gh(F)\gh(G)+\gh(F)-1}G\cdot\schouten{F,\lap H}.\label{EqIndStep}
\end{align}
On the other hand, let us expand the formula
\[
\schouten{\lap F, G\cdot H} + (-)^{\gh(F)-1}\schouten{F,\lap(G\cdot H)},
\]
which is the right hand side of~\eqref{EqLapSchouten} in the inductive claim. We obtain
\begin{align}
={}& \schouten{\lap F, G}\cdot H + (-)^{(\gh(\lap F)-1)\gh(G)}G\cdot\schouten{\lap F,H} \nonumber\\
&+ (-)^{\gh(F)-1}[\![ F,\lap G\cdot H  + (-)^{\gh(G)}\schouten{G,H} + (-)^{\gh(G)}G\cdot\lap H]\!] \nonumber\\
={}& \schouten{\lap F, G}\cdot H + (-)^{\gh(F)\gh(G)}G\cdot\schouten{\lap F,H} + (-)^{\gh(F)-1}\schouten{F,\lap G}\cdot H \nonumber\\
&+ (-)^{\gh(F)-1}(-)^{(\gh(F)-1)(\gh(G)-1)}\lap G\cdot\schouten{F,H} + (-)^{\gh(F)-1}(-)^{\gh(G)}[\![ F,\schouten{G,H}]\!] \nonumber\\
&+ (-)^{(\gh(F)-1)\gh(G)}\schouten{F,G}\cdot\lap H \nonumber\\
&+ (-)^{\gh(F)-1}(-)^{\gh(G)}(-)^{(\gh(F)-1)\gh(G)}G\cdot\schouten{F,\lap H}. \label{EqClaimExpand}
\end{align}
Comparing~\eqref{EqClaimExpand} with~\eqref{EqIndStep}, which was derived from the inductive hypothesis, we see that all terms match except for
\[
(-)^{\gh(F)+\gh(G)-1}[\![\schouten{F,G},H]\!] + (-)^{\gh(F)\gh(G)}[\![ G,\schouten{F,H}]\!]
\]
from~\eqref{EqIndStep} versus
\[
(-)^{\gh(F)+\gh(G)-1}[\![ F,\schouten{G,H}]\!]
\]
from~\eqref{EqClaimExpand}. However, these three terms constitute the Jacobi identity~\eqref{EqJacobiSchouten} for the Schouten bracket. Namely, we have that (c.f.~\cite{Lorentz12})
\[
[\![ F,\schouten{G,H}]\!] = [\![\schouten{F,G},H]\!] + (-)^{(\gh(F)-1)(\gh(G)-1)}[\![ G,\schouten{F,H}]\!],
\]
so that by multiplying both sides of the identity by $(-)^{\gh(F)+\gh(G)-1}$, we fully balance~\eqref{EqIndStep} and~\eqref{EqClaimExpand}. This completes the inductive step and concludes the proof.
\end{proof}

\begin{lemma}\label{LBVOperatorDifferential}
The linear operator
\[
\Delta\colon\overline{H}^{n(1+k)}\bigl(\pi_\BV\times T\pi_\BV\times\ldots\times T\pi_\BV\bigr)\longrightarrow\overline{H}^{n(2+k)}\bigl(\pi_\BV\times T\pi_\BV\times\ldots\times T\pi_\BV\bigr)
\]
is a differential for every $k\geqslant0$.
\end{lemma}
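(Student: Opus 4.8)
The plan is to prove $\lap^2=0$, following the argument of Proposition~\ref{thm:LaplacianDifferential} closely but now exploiting the functional framework, in which each functional derivative produced by $\lap$ is referred to its own copy of the base $M$. It is precisely this feature that makes the reordering of iterated functional derivatives legitimate, thereby removing the obstruction exhibited in Counterexample~\ref{counterexample}.

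First I would apply the coordinate formula for the normalized functional BV-Laplacian from Definition~\ref{DefBVLap} twice to an integral functional $F=\int f\,\dif^nx\in\ol{H}^{n(1+k)}(\pi_\BV\times T\pi_\BV\times\ldots\times T\pi_\BV)$. Absorbing the normalized test-shift factors $\de s^\alpha\,\de s^\d_\alpha\equiv1$ and suppressing, in the style of Proposition~\ref{PropBaseLapSchouten} and Example~\ref{ExCounter2}, the integrations over the several copies of $M$ owned by the successive functional derivatives, this yields
\[
\lap^2(F)=\sum_{\alpha,\beta}\int\ldifv{q^\alpha}\ldifv{q^\d_\alpha}\ldifv{q^\beta}\ldifv[f]{q^\d_\beta},
\]
where the four operators $\overleftarrow\delta/\delta q^\alpha$, $\overleftarrow\delta/\delta q^\d_\alpha$, $\overleftarrow\delta/\delta q^\beta$, $\overleftarrow\delta/\delta q^\d_\beta$ act on distinct copies of the base (with arguments $y_1,\dots,y_4$, say).

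Next I would reorder these four factors by means of the (anti)commutativity of functional derivatives living on separate copies of $M$, which was established in the discussion preceding and in the remark following Definition~\ref{DefBVLap}. Transposing the even-parity operator $\overleftarrow\delta/\delta q^\beta$ past the odd-parity operator $\overleftarrow\delta/\delta q^\d_\alpha$ costs no sign, so
\[
\lap^2(F)=\sum_{\alpha,\beta}\int\ldifv{q^\alpha}\ldifv{q^\beta}\ldifv{q^\d_\alpha}\ldifv[f]{q^\d_\beta}.
\]
Exactly as in Proposition~\ref{thm:LaplacianDifferential}, the composite $\overleftarrow\delta/\delta q^\alpha\circ\overleftarrow\delta/\delta q^\beta$ is symmetric under $\alpha\leftrightarrow\beta$ (two even-parity functional derivatives on separate copies of $M$ commute), whereas $\overleftarrow\delta/\delta q^\d_\alpha\circ\overleftarrow\delta/\delta q^\d_\beta$ is antisymmetric under $\alpha\leftrightarrow\beta$ (two odd-parity functional derivatives on separate copies of $M$ anticommute). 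Hence every summand is the contraction of a symmetric with an antisymmetric array in $(\alpha,\beta)$, and the double sum vanishes. Since nothing in the reasoning depends on $k$, this proves $\lap^2=0$ for all $k\geqslant0$; together with the degree count $\ol{H}^{n(1+k)}\to\ol{H}^{n(2+k)}\to\ol{H}^{n(3+k)}$ this is the asserted differential property.

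The point that demands care — and the place where the enlarged space $\ol{\mathfrak{N}}^n(\pi_\BV)$ earns its keep — is the justification that the four functional derivatives may be reordered \emph{before} any of the dual-frame couplings $\langle\vec{e}_\alpha(x),\vec{e}^{\,\d,\beta}(y)\rangle=\delta^\beta_\alpha\,\de(x-y)$ are performed. Because those couplings are carried out last (cf.\ the remark after Definition~\ref{DefBVLap}), the operators $\overleftarrow\delta/\delta q^\alpha(y_1)$, $\overleftarrow\delta/\delta q^\d_\alpha(y_2)$, and so on, genuinely act on independent factors of the product bundle $\pi_\BV\times T\pi_\BV\times\ldots\times T\pi_\BV$, so that interchanging them produces only the appropriate ghost-parity sign and nothing else. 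A secondary, routine piece of bookkeeping is to verify that the left-arrow transport conventions on the $\overleftarrow\delta$'s introduce no further signs when the odd-parity factors are moved past the even ones; this parallels the footnoted sign check in the proof of formula~\eqref{eq:laplacian-in-coordinates}.
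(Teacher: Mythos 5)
Your argument is correct and coincides with the paper's own proof: both apply the coordinate form of $\lap$ twice, swap the middle two functional derivatives (legitimate because each acts on its own copy of $M$ and the swap is even-past-odd), and conclude by contracting a symmetric pair of even derivatives with an antisymmetric pair of odd ones in $(\alpha,\beta)$. Your extra remarks on performing the dual-frame couplings last and on the degree count are consistent elaborations of the same reasoning, not a different route.
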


\begin{proof}
Let $H\in\overline{H}^*\bigl(\pi_\BV\times T\pi_\BV\times\ldots\times T\pi_\BV\bigr)$ be an integral functional.
Applying the BV-\/Laplacian~$\Delta$ twice and again, not writing the integration signs, 
we obtain
\[
\lap^2(H) = \difv{q^\alpha}\difv{q^\d_\alpha}\difv{q^\beta}\difv{q^\d_\beta}(H)
 = \difv{q^\alpha}\difv{q^\beta}\difv{q^\d_\alpha}\difv{q^\d_\beta}(H),
\]
where we have swapped the middle two functional derivatives in the right hand side. This is the composition of an expression which is symmetric in~$\alpha$ and~$\beta$ (the left two functional derivatives) and an expression which is antisymmetric in~$\alpha$ and~$\beta$ (the right two functional derivatives). Therefore it is zero.
\end{proof}

\begin{theorem}\label{ThBVDifferential}
The Batalin\/--\/Vilkovisky Laplacian~$\lap$ is a differential\textup{:} for all $H \in \ol{\mathfrak{N}}^n(\pi_\BV)$ we have 
\[
\lap^2(H) = 0.
\]
\end{theorem}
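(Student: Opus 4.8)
The plan is to follow exactly the pattern already used for the Schouten bracket and the product rule: establish the identity first on the elementary building blocks and then propagate it to all of $\ol{\mathfrak{N}}^n(\pi_\BV)$ by induction on the number of such blocks. Lemma~\ref{LBVOperatorDifferential} is precisely the base case: when $H$ is a single building block, $H\in\ol{H}^{n(1+k)}(\pi_\BV\times T\pi_\BV\times\ldots\times T\pi_\BV)$, the operator $\lap^2$ is the composition of a piece symmetric in the two contracted summation indices (the two even functional derivatives) with a piece antisymmetric in them (the two odd functional derivatives), so it vanishes. The point that makes this rigorous is that in the functional setting of Definition~\ref{DefBVLap} the functional derivatives are genuinely (anti)commutative, unlike the jet-space variational derivatives whose non-permutability was the source of Counterexample~\ref{counterexample}.

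For the inductive step I would take a composite but homogeneous functional $H=F\cdot G$, with $F$ and $G$ each built from strictly fewer blocks than $H$, so that $\lap^2(F)=0$ and $\lap^2(G)=0$ by the inductive hypothesis (the base case covering $H$ with a single block). Applying the product rule~\eqref{eq:laplacian-on-product}, which the functional Laplacian $\lap$ retains on $\ol{\mathfrak{N}}^n(\pi_\BV)$, gives
\begin{equation*}
\lap(F\cdot G)=\lap(F)\cdot G+(-)^{\gh(F)}\schouten{F,G}+(-)^{\gh(F)}F\cdot\lap(G).
\end{equation*}
Now apply $\lap$ once more to each summand: to $\lap(F)\cdot G$ and to $F\cdot\lap(G)$ one uses the product rule again (recalling $\gh(\lap F)=\gh(F)-1$ and discarding the $\lap^2F$, $\lap^2G$ contributions by the inductive hypothesis), and to $\schouten{F,G}$ one uses Theorem~\ref{ThLapSchouten}, i.e.\ that $\lap$ is a graded derivation of $\schouten{\,,}$. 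Collecting terms, the only survivors are $\schouten{\lap F,G}$, $\schouten{F,\lap G}$, and $\lap(F)\cdot\lap(G)$; each of these occurs exactly twice, and in each pair the two coefficients differ by a factor $(-1)$ once one uses $(-)^{\gh(F)}=-(-)^{\gh(F)-1}$, so they cancel and $\lap^2(F\cdot G)=0$. Since $\lap^2$ is $\Bbbk$-linear and $\ol{\mathfrak{N}}^n(\pi_\BV)$ is spanned by such homogeneous products, this yields $\lap^2\equiv0$ on all of $\ol{\mathfrak{N}}^n(\pi_\BV)$.

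I do not expect a genuine obstacle here: the substantive work — the permutability of functional derivatives that underlies both Lemma~\ref{LBVOperatorDifferential} and Theorem~\ref{ThLapSchouten} — has already been done, and the argument above is a short bookkeeping induction built on those two inputs together with the Leibniz rule~\eqref{eq:laplacian-on-product}. The only thing to watch is the sign accounting: one must feed the correct ghost degrees ($\gh(\lap F)=\gh(F)-1$ and $\gh(\schouten{F,G})=\gh(F)+\gh(G)-1$) into the exponents at each application of the product rule and of Theorem~\ref{ThLapSchouten}, after which the three types of cross terms indeed cancel in pairs as claimed.
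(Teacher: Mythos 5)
Your proposal is correct and follows essentially the same route as the paper: induction on the number of building blocks, with Lemma~\ref{LBVOperatorDifferential} as the base case and the inductive step carried out by applying the product rule~\eqref{eq:laplacian-on-product} twice together with Theorem~\ref{ThLapSchouten}, after which the cross terms $\schouten{\lap F,G}$, $\schouten{F,\lap G}$ and $\lap F\cdot\lap G$ cancel in pairs via $\gh(\lap F)=\gh(F)-1$ in $\BBZ_2$. The sign bookkeeping you describe is exactly the paper's.
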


\begin{proof}
We prove Theorem~\ref{ThBVDifferential} by induction of the number of building blocks from~$\ol{H}^*\bigl(\pi_\BV\times T\pi_\BV\times\ldots\times T\pi_\BV\bigr)$ in the argument $H \in \ol{\mathfrak{N}}^n(\pi_\BV)$ of~$\lap^2$. If $H \in \ol{H}^*\bigl(\pi_\BV\times T\pi_\BV\times\ldots\times T\pi_\BV\bigr)$ itself is an integral functional, then by Lemma~\ref{LBVOperatorDifferential}
there remains nothing to prove. Suppose now that $H = F\cdot G$ for some $F,G \in \ol{\mathfrak{N}}^n(\pi_\BV)$. Then Theorem~\ref{thm:LeibnizLaplacian} yields that
\begin{align*}
\lap^2(F\cdot G) ={}& \lap\left(\lap F\cdot G + (-)^{\gh(F)}\schouten{F,G} + 
 (-)^{\gh(F)}F\cdot\lap G\right).
\intertext{Using Theorem~\ref{thm:LeibnizLaplacian} again and also Theorem~\ref{ThLapSchouten}, we continue the equality:}
={}& \lap^2F\cdot G + (-)^{\gh(\lap F)}\schouten{\lap F,G} + (-)^{\gh(\lap F)}\lap F\cdot\lap G + (-)^{\gh(F)}\schouten{\lap F,G} \\
& + (-)^{\gh(F)}(-)^{\gh(F)-1}\schouten{F,\lap G} + (-)^{\gh(F)}\lap F\cdot\lap G \\
&+ (-)^{\gh(F)}(-)^{\gh(F)}\schouten{F,\lap G} + (-)^{\gh(F)}(-)^{\gh(F)}F\cdot\lap^2G.
\end{align*}
By the inductive hypothesis, the first and last terms in the above formula vanish; taking into account that $\gh(\lap F) = \gh(F)-1$ in $\BBZ_2$, the terms with $\lap F\cdot\lap G$ cancel against each other, as do the terms containing $\schouten{\lap F,G}$ and $\schouten{F,\lap G}$. The proof is complete.
\end{proof}

\section{The quantum master equation}\label{SecMaster}
In this last section we inspect the conditions upon functionals $F \in \ol{\mathfrak{N}}^n(\pi_\BV)$ under which the Feynman path integrals $\int_{\Gamma(\bpi)}[Ds]\,F([s],[s^\d])$ are (infinitesimally) independent of the unphysical, odd-parity antifields $s^\d \in \Gamma(\Pi\wh\bpi)$. The derivation of such a condition (see equation~\eqref{EqLaplace} below) relies on an extra assumption of the translation invariance of a measure in the path integral. It must be noted, however, that we do not define Feynman's integral here and do not introduce that measure which essentially depends on the agreement about the classes of `admissible' sections $\Gamma(\bpi)$ or $\Gamma(\bpi\mathbin{{\times}_M}\Pi\wh\bpi)$. Consequently, our reasoning is to some extent heuristic.

The basics of path integration, which we recall here for consistency, are standard: they illustrate how the geometry of the BV-Laplacian works in practice. We draw the experts' attention only to the fact that in our notation $\Psi$ is not the gauge fixing fermion $\boldsymbol\Psi$ such that $s^\d(x) = \Pi(\delta\boldsymbol\Psi/\delta q(x))$ but it yields the infinitesimal shift $\dot{q}^\d = \delta\Psi/\delta q$ of the anti-objects; we also note that the preservation of parity is not mandatory here and thus an even-parity $\Psi \in \ol{H}^n(\bpi) \hookrightarrow \ol{H}^n(\pi_\BV)$ is a legitimate choice.

Let $F \in \ol{\mathfrak{N}}^n(\pi_\BV)$ be a functional and $\Psi \in \ol{H}^n(\bpi) \hookrightarrow \ol{H}^n(\pi_\BV)$ be an integral functional which, by assumption, is constant along the antifields: $\Psi(s^\alpha,s^\d_\beta) = \Psi(s^\alpha,t^\d_\beta)$ for any sections $\{s^\alpha\} \in \Gamma(\bpi)$ and $\{s^\d_\beta\},\{t^\d_\beta\} \in \Gamma(\Pi\wh\bpi)$. We investigate under which conditions the path integral $\int_{\Gamma(\bpi)}[Ds^\alpha]\,F(s^\alpha,s^\d_\beta) \colon \Gamma(\Pi\wh\bpi) \to \Bbbk$ is infinitesimally independent of a choice of the antifields:
\begin{align}\label{eq:PathIntegralIndependent}
\diftat\varepsilon0\int_{\Gamma(\bpi)}[Ds^\alpha]\,F\left(s^\alpha,s^\d_\beta + \varepsilon\,\difv[\Psi]{q^\beta(x)}\bigg|_{(s^\alpha,s^\d_\beta)}\right) = 0 \quad\text{for all $s^\d \in \Gamma(\Pi\wh\bpi)$.}
\end{align}
Note that this formula makes sense because the bundles $\bpi$ and $\wh\bpi$ are dual so that a variational covector in the geometry of $\bpi$ acts as a shift vector in the geometry of $\wh\bpi$; we let $\varepsilon$ be an odd-parity parameter. The left hand side of~\eqref{eq:PathIntegralIndependent} equals
(here and in what follows we proceed over the building blocks of~$F$ by the graded Leibniz rule)
\[
\int_{\Gamma(\bpi)}[Ds^\alpha]\int_M\dif x\,\rdifv[\Psi]{q^\alpha(x)}\bigg|_{(s^\alpha,s^\d_\beta)}\cdot\ldifv[F]{q^\d_\alpha(x)}\bigg|_{(s^\alpha,s^\d_\beta)}, \qquad s^\d\in\Gamma(\Pi\wh\bpi).
\]
Take any auxiliary section $\delta s = (\delta s^\alpha, \delta s^\d_\beta) \in \Gamma\bigl(T(\bpi\mathbin{{\times}_M}\Pi\wh\bpi)\bigr)$ normalized by $\delta s^\alpha(x)\cdot\delta s^\d_\alpha(x) \equiv 1$ at every $x\in M$ for each $\alpha = 1,\dots,m + m_1+\cdots+m_\lambda = N$ and blow up the scalar integrand to a pointwise contraction of dual object taking their values in the fibres $T_{s(x)}V_x$ and $T_{s^\dagger(x)}\Pi V^\d_x$ of $T(\bpi\mathbin{{\times}_M}\Pi\wh\bpi)$ over $x \in M$: for $s = (s^\alpha, s^\d_\beta)$ we have
\begin{align*}
\int_M\dif x&\int_M\dif y\,\rdifv[\Psi]{q^\alpha(x)}\bigg|_s
\cdot\delta^\alpha_\beta\,\delta(x-y)\cdot
\ldifv[F]{q^\d_\beta(y)}\bigg|_s \nonumber\\
&= \int_M\dif x\int_M\dif y\,\rdifv[\Psi]{q^\alpha(x)}\bigg|_s\cdot\delta s^\alpha(x)\,\langle\vec{e}_\alpha(x),\vec{e}^{\,\d,\beta}(y)\rangle\,\delta s^\d_\beta(y)\cdot\ldifv[F]{q^\d_\beta(y)}\bigg|_s.
\end{align*}
In fact, the integrand refers to a definition of the evolutionary vector field $\bs{Q}^\Psi$ such that $\bs{Q}^\Psi(F) \cong \schouten{\Psi, F}$ modulo integration by parts in the building blocks of $F$, c.f.~\cite{Lorentz12}. Due to a special choice of the dependence of $\Psi$ on $q$ only, this is indeed the Schouten bracket $\schouten{\Psi, F}$.

To rephrase the indifference of the path integral to a choice of $\Psi$ in terms of an equation upon the functional $F$ alone, we perform integration by parts in Feynman's integral. For this we employ the translation invariance $[Ds] = [D(s-\mu\cdot\delta s)]$ of the functional measure.

\begin{lemma}\label{LTI}
Let $H \in \ol{\mathfrak{N}}^n(\pi_\BV)$ be a functional and $\delta s \in \Gamma(T\bpi) \hookrightarrow \Gamma\bigl(T(\bpi\mathbin{{\times}_M}\Pi\wh\bpi)\bigr)$ be a shift section. Then we have that
\[
\int_{\Gamma(\bpi)}[Ds^\alpha]\int_M\dif x\,\delta s^\alpha(x)\,\ldifv[H]{q^\alpha(x)}\bigg|_{(s^\alpha,s^\d_\beta)} = 0,
\]
where the section $s^\d \in \Gamma(\Pi\wh\bpi)$ is a parameter.
\end{lemma}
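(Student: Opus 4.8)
The plan is to read Lemma~\ref{LTI} as the precise form of ``integration by parts in Feynman's integral'' announced in the paragraph preceding it, and to derive it from the single structural input already postulated there: the translation invariance $[Ds]=[D(s-\mu\cdot\delta s)]$ of the functional measure. First I would fix the antifield section $s^\d\in\Gamma(\Pi\wh\bpi)$ once and for all and regard $\int_{\Gamma(\bpi)}[Ds^\alpha]\,H(s^\alpha,s^\d_\beta)$ purely as a number attached to the field half. Because $\delta s\in\Gamma(T\bpi)$ shifts only the fields, the change of the integration variable $s^\alpha\mapsto s^\alpha+\mu\cdot\delta s^\alpha$ (with $\mu$ a numerical parameter) does not touch $s^\d$, and the translation invariance of the measure gives
\[
\int_{\Gamma(\bpi)}[Ds^\alpha]\,H(s^\alpha,s^\d_\beta)=\int_{\Gamma(\bpi)}[Ds^\alpha]\,H(s^\alpha+\mu\cdot\delta s^\alpha,s^\d_\beta)
\]
for every $\mu$; that is, the right-hand side is constant in $\mu$.

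The second step is to differentiate this constancy in $\mu$ at $\mu=0$, pushing the operator $\dift{\mu}$ under the path-integral sign — the one heuristic move, admissible within the same formal framework in which the measure's translation invariance was postulated. Inside the integrand, $\diftat{\mu}0 H(s^\alpha+\mu\cdot\delta s^\alpha,s^\d_\beta)$ is exactly the response of $H$ to the test shift $\delta s$. For a single building block $F=\int f(x,[\bu])\,\dif x$ this response is computed by formula~\eqref{EqConventFD}; for a composite $H\in\ol{\mathfrak{N}}^n(\pi_\BV)$ one extends it over the building blocks by the graded Leibniz rule, as spelled out around Example~\ref{ExOneStep}. Since $\delta s$ has vanishing antifield components, only the field-direction functional derivatives $\overleftarrow\delta/\delta q^\alpha$ survive, and the response equals $\int_M\dif x\,\delta s^\alpha(x)\,\ldifv[H]{q^\alpha(x)}\big|_{(s^\alpha,s^\d_\beta)}$. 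Combining the two steps yields $\int_{\Gamma(\bpi)}[Ds^\alpha]\int_M\dif x\,\delta s^\alpha(x)\,\ldifv[H]{q^\alpha(x)}\big|_{(s^\alpha,s^\d_\beta)}=0$, which is the assertion.

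The main obstacle I anticipate is conceptual rather than computational: Feynman's measure is never actually constructed (as acknowledged at the start of Section~\ref{SecMaster}), so the argument stands or falls on the postulate $[Ds]=[D(s-\mu\cdot\delta s)]$ and on the admissibility of differentiation under the integral. I would therefore write the proof so as to make this dependence explicit, and I would be careful on two further points: first, to keep $s^\d$ genuinely inert throughout — the lemma concerns only the $\Gamma(\bpi)$-integration, with $s^\d$ a bystander — and second, to invoke the test-shift and congruence machinery of Definition~\ref{DefBVLap} (and of the discussion surrounding~\eqref{EqDeltaViaCoupling}) so that $\ldifv[H]{q^\alpha(x)}$ is the well-defined element of $\ol{\mathfrak{N}}^n(\pi_\BV)$ rather than a formal symbol; in particular the attachment point of $\delta s$ and the point at which each building block's density is differentiated are merged only at the very end, through the coupling of dual frames in the fibres of $\bpi\mathbin{{\times}_M}\Pi\wh\bpi$, so that no external regularization enters.
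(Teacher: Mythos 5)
Your proposal is correct and follows essentially the same route as the paper's own proof: translation invariance $[Ds^\alpha]=[D(s^\alpha-\mu\,\delta s^\alpha)]$ of the measure, differentiation at $\mu=0$ under the path-integral sign, and identification of $\diftat\mu0 H(s^\alpha+\mu\,\delta s^\alpha,s^\d_\beta)$ with the response $\int_M\dif x\,\delta s^\alpha(x)\,\lhdifv[H]{q^\alpha(x)}$ via the graded Leibniz rule over building blocks. The only presentational difference is that the paper starts from $0=\diftat\mu0\int[Ds^\alpha]\,H$ (a $\mu$-independent quantity) and then changes variables, whereas you state constancy in $\mu$ first and then differentiate — the same computation in a slightly different order.
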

\begin{proof}
Indeed,
\begin{align*}
0 &= \diftat\mu0\int_{\Gamma(\bpi)}[Ds^\alpha]\,H(s^\alpha,s^\d_\beta),
\intertext{because the integral contains no parameter $\mu \in \Bbbk$. We continue the equality:}
&= \diftat\mu0\int_{\Gamma(\bpi)}[D(s^\alpha-\mu\,\delta s^\alpha)]\,H(s^\alpha,s^\d_\beta) \\
&= \diftat\mu0\int_{\Gamma(\bpi)}[Ds^\alpha]\,H(s^\alpha + \mu\,\delta s^\alpha,s^\d_\beta)
 = \int_{\Gamma(\bpi)}[Ds^\alpha]\diftat\mu0H(s^\alpha + \mu\,\delta s^\alpha,s^\d_\beta),
\end{align*}
which yields the helpful formula in the Lemma's assertion.
\end{proof}

Returning to the functionals $\Psi$ and $F$ and denoting $G(s) := \diftat\ell0 F(s + \ell\cdot\overleftarrow{\delta s}^\d_\beta\cdot\vec{e}^{\,\d,\beta})$, we use the Leibniz rule for the derivative of $H = \Psi\cdot G$:
\begin{align*}
\diftat\mu0(\Psi\cdot G)(s + \mu\cdot\overleftarrow{\delta s}^\alpha\cdot\vec{e}_\alpha)
 ={}& \int\dif x\,\delta s^\alpha(x)\,\ldifv[\Psi]{q^\alpha(x)}\bigg|_s\cdot G(s) \\
    & + \Psi(s)\cdot\int\dif x\,\delta s^\alpha(x)\,\ldifv[G]{q^\alpha(x)}\bigg|_s.
\end{align*}
Because the path integral over $[Ds^\alpha]$ of the entire expression vanishes by Lemma~\ref{LTI}, we infer that the path integrals of the two terms are opposite. The integral of the first term equals the initial expression for the path integral over $F$, i.e., the left\/-\/hand side of equation~\eqref{eq:PathIntegralIndependent}. Consequently, if
\begin{align}\label{ObserveInPath}
\int_{\Gamma(\bpi)}[Ds^\alpha]\,\Psi(s^\alpha)\cdot\lap F(s^\alpha,s^\d_\beta) = 0
\end{align}
for $\{s^\d_\beta\} \in \Gamma(\Pi\wh\bpi)$ and for all $\Psi \in \ol{H}^n(\bpi) \hookrightarrow \ol{H}^n(\pi_\BV)$, then the path integral over $F$ is infinitesimally independent of the antifields $\{s^\d_\beta\} \in \Gamma(\Pi\wh\bpi)$.

The condition
\begin{equation}\label{EqLaplace}
\lap F = 0
\end{equation}
is sufficient for equation~\eqref{ObserveInPath}, and therefore equation~\eqref{eq:PathIntegralIndependent}, to hold. By specifying a class $\Gamma(\bpi\mathbin{{\times}_M}\Pi\wh\bpi)$ of admissible sections of the BV-bundle for a concrete field model, and endowing that space of sections with a suitable metric, one could reinstate a path integral analogue of the main lemma in the calculus of variations and then argue that the condition $\lap F = 0$ is also necessary.

Summarizing, whenever equation~\eqref{EqLaplace} holds, one can assign arbitrary admissible values to the odd-parity antifields; for example, one can let $s^\d_\beta(x) = \Pi(\delta\bs\Psi/\delta q^\alpha(x))$ for a gauge-fixing integral $\bs\Psi \in \ol{H}^n(\bpi)$. This choice is reminiscent of the substitution principle, see~\cite{Lorentz12} and~\cite{Olver}.

\begin{proposition}
The $\Bbbk$-linear space of solutions~$F \in \ol{\mathfrak{N}}^n(\pi_\BV)$ to the equation~\eqref{EqLaplace} is a graded Poisson algebra with respect to the Schouten bracket $\schouten{\,,}$ and the multiplication~$\cdot$ in $\ol{\mathfrak{N}}^n(\pi_\BV)$.
\end{proposition}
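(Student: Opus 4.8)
The plan is to exhibit the $\Bbbk$-linear solution space $K:=\{F\in\ol{\mathfrak{N}}^n(\pi_\BV)\mid\lap F=0\}$ as a graded Poisson subalgebra of the ambient graded Poisson algebra $\bigl(\ol{\mathfrak{N}}^n(\pi_\BV),\cdot,\schouten{\,,}\bigr)$. The ambient structure is already available: the multiplication $\cdot$ is graded-commutative and associative, and by Theorem~\ref{thm:LeibnizSchouten} (valid on $\ol{\mathfrak{N}}^n(\pi_\BV)$, as recorded after Definition~\ref{DefBVLap}) the antibracket $\schouten{\,,}$ is $\Bbbk$-bilinear, shifted-graded skew-symmetric, obeys the shifted-graded Jacobi identity~\eqref{EqJacobiSchouten}, and is a graded biderivation with respect to~$\cdot$. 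Since $\lap$ is $\Bbbk$-linear, $K$ is a linear subspace; it then remains to verify that $K$ is closed under both operations, after which every identity above restricts to $K$ unchanged.

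Closure under the antibracket is the immediate half. By Theorem~\ref{ThLapSchouten}, $\lap$ is a graded derivation of $\schouten{\,,}$, so for $F,G\in K$
\[
\lap\schouten{F,G}=\schouten{\lap F,G}+(-)^{\gh(F)-1}\schouten{F,\lap G}=0+0=0,
\]
and hence $\schouten{F,G}\in K$; together with the inherited skew-symmetry and Jacobi identity this already makes $(K,\schouten{\,,})$ a graded Lie algebra.

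The remaining and more delicate half is closure under multiplication, and this is the step I expect to be the main obstacle. Feeding $F,G\in K$ into the product rule~\eqref{eq:laplacian-on-product} kills the first and third terms and leaves
\[
\lap(F\cdot G)=(-)^{\gh(F)}\schouten{F,G};
\]
because $\lap$ is a second-order operator and not a derivation of $\cdot$, this antibracket contribution does not automatically vanish, so one cannot conclude $F\cdot G\in K$ by a one-line computation as on the bracket side. The argument I would run here is to combine the first step --- $\schouten{F,G}$ is $\lap$-closed --- with the fact that $\schouten{F,G}=(-)^{\gh(F)}\lap(F\cdot G)$ is $\lap$-exact, so that, working as throughout Section~\ref{SecMaster} with the kernel/image data of the differential $\lap$ (Theorem~\ref{ThBVDifferential}), the product is seen to descend to the solution space; this is exactly the point where the non-derivation nature of $\lap$ has to be reconciled with the multiplicative structure. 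Once closure under both $\cdot$ and $\schouten{\,,}$ is secured, graded-commutativity, associativity, shifted-graded skew-symmetry, the Jacobi identity~\eqref{EqJacobiSchouten}, and the graded Leibniz rule of Theorem~\ref{thm:LeibnizSchouten} are inherited by $K$ from $\ol{\mathfrak{N}}^n(\pi_\BV)$, which completes the proof.
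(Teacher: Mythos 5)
Your first display is, word for word, the paper's entire proof: closure of the solution space under $\schouten{\,,}$ follows from Theorem~\ref{ThLapSchouten}, and the remaining identities (bilinearity, shifted-graded skew-symmetry, the Jacobi identity~\eqref{EqJacobiSchouten}, the Leibniz rule of Theorem~\ref{thm:LeibnizSchouten}) are inherited from the ambient space $\ol{\mathfrak{N}}^n(\pi_\BV)$. Up to that point your argument coincides with the paper's.

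The second half, where you try to establish closure of the kernel $K=\ker\lap$ under the multiplication~$\cdot$, contains a genuine gap. For $F,G\in K$ the product rule~\eqref{eq:laplacian-on-product} gives $\lap(F\cdot G)=(-)^{\gh(F)}\schouten{F,G}$, so $F\cdot G\in K$ if and only if $\schouten{F,G}$ vanishes \emph{identically}; knowing that $\schouten{F,G}$ is $\lap$-closed (your first step) or $\lap$-exact (the same formula read backwards) does not make it zero, and no ``descent'' argument is available because the proposition is about the kernel of $\lap$, not about its cohomology. In fact $K$ is not closed under~$\cdot$: with one-dimensional base and fibre take $F=\int q^3\,\dif x$ (no $q^\d$ occurs, so $\lap F=0$) and $G=\int q^\d q\,q_{xx}\,\dif x$ (which is $\lap$-closed, c.f.\ Counterexample~\ref{counterexample} and Example~\ref{ExCounter2}); then $\schouten{F,G}=3\int q^3q_{xx}\,\dif x\cong-9\int q^2q_x^2\,\dif x$ is not cohomologically trivial, whence $\lap(F\cdot G)\neq0$. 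The paper's own proof makes no claim of multiplicative closure -- it consists solely of the bracket-closure computation, the role of the multiplication in the statement being only that the Leibniz and Jacobi compatibilities already hold on all of $\ol{\mathfrak{N}}^n(\pi_\BV)$ -- so the step you single out as ``the main obstacle'' should be dropped rather than repaired: as stated, literal closure of $K$ under the product is false.
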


\begin{proof}
The property $\lap F = 0$ and $\lap G = 0$ implies $\lap\schouten{F,G} = 0$ by Theorem~\ref{ThLapSchouten}.
\end{proof}

The Laplace equation~\eqref{EqLaplace} ensures the infinitesimal independence from non-physical anti-objects for path integrals of functionals over physical fields -- not only in the classical BV-geometry of the bundle $\bpi\mathbin{{\times}_M}\Pi\wh\bpi$, but also in the quantum setup, whenever all objects are tensored with formal power series $\Bbbk[[\hbar,\hbar^{-1}]]$ in the Planck constant $\hbar$. It is accepted that each quantum field $s^\hbar$ contributes to the expectation value of a functional~$\mathcal{O}^\hbar$ with the factor~$\exp({i}S^\hbar_\BV(s^\hbar)/{\hbar})$, where~$S^\hbar_\BV$ is the quantum BV-action of the model. Solutions $\mathcal{O}^\hbar$ of the equation $\lap(\mathcal{O}^\hbar\cdot\exp({i}S^\hbar_\BV/{\hbar})) = 0$ are the~\emph{observables}. In particular, the postulate that the unit $1 \colon s \mapsto 1 \in \Bbbk$ is averaged to unit by the Feynman integral of $1\cdot\exp({i}S^\hbar_\BV(s^\hbar)/{\hbar})$ over the space of quantum fields $s^\hbar$ normalizes the integration measure and constrains the quantum BV-action by the quantum master equation (see also, e.g.,~\cite{BV1,BV2,HT}). 

\begin{proposition}\label{thm:QME}
   Let $S_\BV^\hbar$ be the even \textup{(}i.e., it has a density that has an even number of anticommuting coordinates in it\textup{)} quantum BV-action. If the identity $\lap(\exp(i S_\BV^\hbar/\hbar)) = 0$ holds, then $S_\BV^\hbar$ satisfies the quantum master equation\textup{:}
      \begin{align}\label{eq:QME}
         \frac12\schouten{S_\BV^\hbar,S_\BV^\hbar} = i\hbar\lap S_\BV^\hbar.
      \end{align}
\end{proposition}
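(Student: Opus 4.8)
The plan is to derive the ``BV exponential identity''
$\lap\bigl(\exp(iS_\BV^\hbar/\hbar)\bigr)=\bigl(\tfrac{i}{\hbar}\lap S_\BV^\hbar-\tfrac{1}{2\hbar^2}\schouten{S_\BV^\hbar,S_\BV^\hbar}\bigr)\exp(iS_\BV^\hbar/\hbar)$ and then cancel the invertible exponential factor. I would begin by abbreviating $\Phi:=\tfrac{i}{\hbar}S_\BV^\hbar$; since $S_\BV^\hbar$ is even and the scalars $i$ and $\hbar$ carry no ghost parity, $\Phi$ is an even element of $\ol{\mathfrak{N}}^n(\pi_\BV)$ with coefficients in $\Bbbk[[\hbar,\hbar^{-1}]]$. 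Consequently every sign prefactor $(-)^{\gh(\Phi)}$ and $(-)^{(\gh(\Phi)-1)\gh(\Phi)}$ occurring in the Leibniz rule of Theorem~\ref{thm:LeibnizSchouten} and in the product rule~\eqref{eq:laplacian-on-product} equals $+1$, and $\Phi$ commutes with every functional; this is precisely the point at which the evenness hypothesis on $S_\BV^\hbar$ is needed.

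Next I would run two short inductions. First, from $\schouten{\Phi,G\cdot H}=\schouten{\Phi,G}\cdot H+G\cdot\schouten{\Phi,H}$ (no sign, $\Phi$ even) together with $\schouten{\Phi,1}=0$, induction on $n$ gives $\schouten{\Phi,\Phi^{\,n-1}}=(n-1)\,\schouten{\Phi,\Phi}\,\Phi^{\,n-2}$. Feeding this into the product rule~\eqref{eq:laplacian-on-product}, which here reads $\lap(\Phi^n)=\lap(\Phi)\,\Phi^{\,n-1}+\schouten{\Phi,\Phi^{\,n-1}}+\Phi\,\lap(\Phi^{\,n-1})$, and inducting again, I obtain $\lap(\Phi^n)=n\,\lap(\Phi)\,\Phi^{\,n-1}+\binom{n}{2}\,\schouten{\Phi,\Phi}\,\Phi^{\,n-2}$; the recursion closes because $(n-1)+\binom{n-1}{2}=\binom{n}{2}$. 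Summing the formal series $\exp\Phi=\sum_{n\geqslant0}\Phi^n/n!$ term by term — which is purely algebraic, as $\lap$ is defined componentwise on the direct sum defining $\ol{\mathfrak{N}}^n(\pi_\BV)$ and is $\Bbbk[[\hbar,\hbar^{-1}]]$-linear — then yields $\lap(\exp\Phi)=\bigl(\lap\Phi+\tfrac12\schouten{\Phi,\Phi}\bigr)\exp\Phi$.

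Finally I would substitute $\Phi=\tfrac{i}{\hbar}S_\BV^\hbar$ back: by $\Bbbk$-bilinearity $\lap\Phi=\tfrac{i}{\hbar}\lap S_\BV^\hbar$ and $\schouten{\Phi,\Phi}=-\tfrac{1}{\hbar^2}\schouten{S_\BV^\hbar,S_\BV^\hbar}$, so the hypothesis $\lap(\exp(iS_\BV^\hbar/\hbar))=0$ becomes $\bigl(\tfrac{i}{\hbar}\lap S_\BV^\hbar-\tfrac{1}{2\hbar^2}\schouten{S_\BV^\hbar,S_\BV^\hbar}\bigr)\exp(iS_\BV^\hbar/\hbar)=0$. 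Since $\exp(iS_\BV^\hbar/\hbar)$ is invertible in $\ol{\mathfrak{N}}^n(\pi_\BV)\otimes\Bbbk[[\hbar,\hbar^{-1}]]$, with inverse $\exp(-iS_\BV^\hbar/\hbar)$, the bracketed factor vanishes; multiplying it through by $\hbar^2$ gives exactly $\tfrac12\schouten{S_\BV^\hbar,S_\BV^\hbar}=i\hbar\,\lap S_\BV^\hbar$, which is the quantum master equation~\eqref{eq:QME}.

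I expect the only real obstacle to be the $\BBZ_2$-sign bookkeeping in the two inductions, and it is dissolved by the evenness of $S_\BV^\hbar$: it is this assumption that makes $\schouten{\Phi,\Phi^{\,n-1}}$ telescope with trivial signs and the binomial combinatorics go through, whereas for an odd or mixed-parity BV-action the exponential identity would acquire a different shape. Everything else — the term-by-term action of $\lap$ on the formal exponential and the invertibility of that exponential — is routine once one records that $\lap$ respects the homogeneous decomposition of $\ol{\mathfrak{N}}^n(\pi_\BV)$.
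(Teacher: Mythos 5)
Your proposal is correct and follows essentially the same route as the paper: the two inductions you run are precisely the paper's Lemma on $\schouten{G,F^n}=n\schouten{G,F}F^{n-1}$ and its companion $\lap(F^n)=n(\lap F)F^{n-1}+\tfrac12 n(n-1)\schouten{F,F}F^{n-2}$ (with $F=\tfrac{i}{\hbar}S_\BV^\hbar$), followed by the same term-by-term resummation of $\exp F$. The only cosmetic difference is that you conclude by explicitly invoking invertibility of the exponential, where the paper simply states that the result follows from the factored identity.
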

\noindent We will need the following two lemmas.
\begin{lemma}\label{thm:SchoutenPower}
   Let $F \in \ol{H}^n(\pi_\BV)$ be an even integral functional, let $G \in \ol{\mathfrak{M}}^n(\pi_\BV)$ be another functional, and let $n \in \mathbb{N}_{\geq1}$. Then
   \[
      \schouten{G,F^n} = n\schouten{G,F}F^{n-1}.
   \]
\end{lemma}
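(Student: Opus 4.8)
The plan is to prove the identity by induction on $n$, stripping off one factor of $F$ at each step by means of the graded Leibniz rule of Theorem~\ref{thm:LeibnizSchouten}. The base case $n=1$ is the tautology $\schouten{G,F^1}=\schouten{G,F}$. Throughout, the claimed equality is an equality in $\ol{\mathfrak{M}}^n(\pi_\BV)$ (equivalently, of maps $\Gamma(\pi_\BV)\to\Bbbk$), so that the pointwise formulation of Theorem~\ref{thm:LeibnizSchouten} is exactly what is needed; note also that $\ol{H}^n(\pi_\BV)\subseteq\ol{\mathfrak{M}}^n(\pi_\BV)$ and $F^{n-1}\in\ol{\mathfrak{M}}^n(\pi_\BV)$ for $n\geqslant2$, so that both $F$ and $F^{n-1}$ are legitimate arguments of the bracket.

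For the inductive step I would write $F^n=F\cdot F^{n-1}$ and apply Theorem~\ref{thm:LeibnizSchouten} with this product occupying the right slot of $\schouten{\,,}$:
\[
\schouten{G,F^n}=\schouten{G,F}\cdot F^{n-1}+(-)^{(\gh(G)-1)\gh(F)}\,F\cdot\schouten{G,F^{n-1}}.
\]
The decisive point is that $F$ is \emph{even}: then $\gh(F)=0$ in $\BBZ_2$, so the prefactor $(-)^{(\gh(G)-1)\gh(F)}$ is $+1$ regardless of $\gh(G)$, and, moreover, the even functional $F$ can be commuted without sign through $\schouten{G,F}$ under the $\Bbbk$-valued pointwise multiplication of $\ol{\mathfrak{M}}^n(\pi_\BV)$. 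Substituting the inductive hypothesis $\schouten{G,F^{n-1}}=(n-1)\,\schouten{G,F}\,F^{n-2}$ and using commutativity and associativity of this multiplication to collect the powers of $F$, I obtain
\[
\schouten{G,F^n}=\schouten{G,F}\,F^{n-1}+(n-1)\,\schouten{G,F}\,F^{n-1}=n\,\schouten{G,F}\,F^{n-1},
\]
which closes the induction.

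I do not anticipate any genuine obstacle: the argument is a one-line induction, and the only care required is the parity bookkeeping. The hypothesis that $F$ be even is exactly what trivializes every sign that could appear -- if $F$ were odd, the two displayed lines above would instead yield an alternating sum, which explains why the lemma is stated only for even $F$. As a purely cosmetic point, I would make explicit that the ``multiplication'' here is the pointwise product of $\Bbbk$-values introduced in Section~\ref{sec:preliminaries}, so that moving the even factor $F$ across the products is literally commutativity of numbers in $\Bbbk$ combined with the vanishing of all ghost-parity signs attached to $F$.
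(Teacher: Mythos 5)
Your argument is correct and coincides with the paper's own proof: an induction on $n$ using the Leibniz rule of Theorem~\ref{thm:LeibnizSchouten}, with all signs trivialized because $F$ is even. The only (immaterial) difference is the labelling of the induction step ($F^n=F\cdot F^{n-1}$ versus the paper's $F^{n+1}=F\cdot F^n$).
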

\begin{proof}
   We use induction on Theorem~\ref{thm:LeibnizSchouten}. Note that all signs vanish since $F$ is even, meaning that whenever $F$ is multiplied with any other integral functional, the factors may be freely swapped without this resulting in minus signs. For $n=1$ the statement is trivial. Suppose the formula holds for some $n \in \mathbb{N}_{>1}$, then
   \[
      \schouten{G,F^{n+1}}
       = \schouten{G,F\cdot F^n}
       = \schouten{G,F}F^n + F\schouten{G,F^n}
       = \schouten{G,F}F^n + nF\schouten{G,F}F^{n-1}
       = (n+1)\schouten{G,F}F^n,
   \]
   so that the statement also holds for $n+1$.
\end{proof}

\begin{lemma}
   Let $F \in \ol{H}^n(\pi_\BV)$ be an even integral functional, and let $n \in \mathbb{N}_{\geq2}$. Then
   \[
      \lap(F^n) = n(\lap F)F^{n-1} + \frac12n(n-1)\schouten{F,F}F^{n-2}.
   \]
\end{lemma}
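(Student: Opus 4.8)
The plan is to argue by induction on~$n$, using the product rule for~$\lap$ from Theorem~\ref{thm:LeibnizLaplacian} together with the formula $\schouten{G,F^n}=n\schouten{G,F}F^{n-1}$ from Lemma~\ref{thm:SchoutenPower}. The observation that renders the sign bookkeeping trivial is that $F$ is even, so $\gh(F)=0$ and every sign factor of the form $(-)^{\gh(F)}$, $(-)^{\gh(F)-1}$, or $(-)^{\gh(F)\gh(\cdot)}$ that would appear when commuting $F$ past another building block collapses to~$+1$; in particular, powers of~$F$ commute freely with~$\lap F$ and with~$\schouten{F,F}$.

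For the base case $n=2$ I would simply apply Theorem~\ref{thm:LeibnizLaplacian} to the product $F\cdot F$:
\[
\lap(F^2)=(\lap F)\cdot F+(-)^{\gh(F)}\schouten{F,F}+(-)^{\gh(F)}F\cdot\lap F=2(\lap F)F+\schouten{F,F},
\]
which is exactly the claimed formula for $n=2$, since $\tfrac12\cdot 2\cdot 1=1$.

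For the inductive step, assume the formula holds for some $n\geqslant 2$ and write $F^{n+1}=F\cdot F^n$. Applying Theorem~\ref{thm:LeibnizLaplacian} once more (all signs again equal to~$1$) gives
\[
\lap(F^{n+1})=(\lap F)F^n+\schouten{F,F^n}+F\cdot\lap(F^n).
\]
Then I would substitute $\schouten{F,F^n}=n\schouten{F,F}F^{n-1}$ from Lemma~\ref{thm:SchoutenPower} and the inductive hypothesis $\lap(F^n)=n(\lap F)F^{n-1}+\tfrac12 n(n-1)\schouten{F,F}F^{n-2}$, use that $F$ commutes with $\lap F$ and with $\schouten{F,F}$, and collect terms. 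The coefficient of $(\lap F)F^n$ becomes $1+n=n+1$, and the coefficient of $\schouten{F,F}F^{n-1}$ becomes $n+\tfrac12 n(n-1)=\tfrac12 n(n+1)$, which is precisely the asserted formula with $n$ replaced by $n+1$, completing the induction.

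I do not anticipate any genuine obstacle: the only point requiring a moment's care is to confirm that all the graded signs are trivial, and this follows immediately from $\gh(F)$ being even; everything else reduces to the one-line arithmetic identity $n+\tfrac12 n(n-1)=\tfrac12 n(n+1)$.
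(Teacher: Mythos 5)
Your proof is correct and follows essentially the same route as the paper: induction on $n$, with the base case $n=2$ from Theorem~\ref{thm:LeibnizLaplacian} and the inductive step combining that theorem with Lemma~\ref{thm:SchoutenPower}, all signs trivializing because $F$ is even. The arithmetic collection of coefficients matches the paper's calculation exactly.
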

\begin{proof}
   We use induction and the previous lemma. For $n=2$ the formula clearly holds by Theorem~\ref{thm:LeibnizLaplacian}. Suppose that it holds for some $n \in \mathbb{N}_{>2}$, then
   \begin{align*}
      \lap(F^{n+1}) &= \lap(F\cdot F^n) 
       = (\lap F)F^n + \schouten{F,F^n} + F\lap(F^n) \\
      &= (\lap F)F^n + n\schouten{F,F}F^{n-1} + nF(\lap F)F^{n-1} + \frac12n(n-1)F\schouten{F,F}F^{n-2} \\
      &= (n+1)(\lap F)F^n + \frac12(n+1)n\schouten{F,F}F^{n-1},
   \end{align*}
   so that the statement also holds for $n+1$.
\end{proof}

\begin{proof}[Proof of Proposition~\ref{thm:QME}]
   For convenience, we set $F = \frac{i}{\hbar}S_\BV^\hbar$. Then
   \begin{align*}
      0 &= \lap(\exp F) 
         = \lap\left(\sum_{n=0}^\infty \frac1{n!}F^n\right)
         = \sum_{n=0}^\infty \frac1{n!}\lap(F^n) \\
        &= \sum_{n=0}^\infty \frac{n}{n!}(\lap F)F^{n-1} + \sum_{n=0}^\infty\frac{1}{2n!}n(n-1)\schouten{F,F}F^{n-2} \\
        &= (\lap F)\sum_{n=1}^\infty \frac{1}{(n-1)!}F^{n-1} + \frac12\schouten{F,F}\sum_{n=2}^\infty\frac{1}{(n-2)!}F^{n-2} \\
        &= \left(\lap F + \frac12\schouten{F,F}\right)\exp F 
         = \left(\frac{i}{\hbar}\lap S_\BV^\hbar - \frac1{2\hbar^2}\schouten{S_\BV^\hbar,S_\BV^\hbar}\right)\exp\left(\frac{i}{\hbar}S_\BV^\hbar\right),
   \end{align*}
   from which the result follows.
\end{proof}

\begin{proposition}
   If the even functionals $\mathcal{O}$ and $S_\BV^\hbar$ are such that $\lap(\mathcal{O}\exp(i S_\BV^\hbar/\hbar)) = 0$ and $\lap(\exp(i S_\BV^\hbar/\hbar)) = 0$ hold, respectively, then $\mathcal{O}$ satisfies
   \begin{align}\label{eq:def-Omega}
      \Omega(\mathcal{O}) := \schouten{S_\BV^\hbar, \mathcal{O}} - i\hbar \lap \mathcal{O} = 0.
   \end{align}
\end{proposition}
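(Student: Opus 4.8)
The plan is to reduce the statement to the Leibniz rule for $\lap$ together with the power-series identity for the Schouten bracket that was already exploited in the proof of Proposition~\ref{thm:QME}. First I would apply Theorem~\ref{thm:LeibnizLaplacian} to the product $\mathcal{O}\cdot\exp(iS_\BV^\hbar/\hbar)$. Since $\mathcal{O}$ is even, every sign factor $(-)^{\gh(\mathcal{O})}$ equals $+1$, so
\[
\lap\bigl(\mathcal{O}\cdot\exp(iS_\BV^\hbar/\hbar)\bigr) = (\lap\mathcal{O})\cdot\exp(iS_\BV^\hbar/\hbar) + \schouten{\mathcal{O},\exp(iS_\BV^\hbar/\hbar)} + \mathcal{O}\cdot\lap\bigl(\exp(iS_\BV^\hbar/\hbar)\bigr).
\]
By hypothesis the left-hand side and the last term on the right both vanish, leaving the identity $(\lap\mathcal{O})\cdot\exp(iS_\BV^\hbar/\hbar) = -\schouten{\mathcal{O},\exp(iS_\BV^\hbar/\hbar)}$.

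Next I would evaluate $\schouten{\mathcal{O},\exp(iS_\BV^\hbar/\hbar)}$. Setting $F = (i/\hbar)S_\BV^\hbar$, which is even, Lemma~\ref{thm:SchoutenPower} gives $\schouten{\mathcal{O},F^n} = n\schouten{\mathcal{O},F}F^{n-1}$; expanding the exponential as a formal power series exactly as in the proof of Proposition~\ref{thm:QME} then yields $\schouten{\mathcal{O},\exp F} = \schouten{\mathcal{O},F}\exp F = (i/\hbar)\schouten{\mathcal{O},S_\BV^\hbar}\exp F$ by $\Bbbk$-bilinearity. Since both $\mathcal{O}$ and $S_\BV^\hbar$ are even, the shifted-graded skew-symmetry of Theorem~\ref{thm:LeibnizSchouten} reads $\schouten{\mathcal{O},S_\BV^\hbar} = \schouten{S_\BV^\hbar,\mathcal{O}}$ (the exponent $(\gh(\mathcal{O})-1)(\gh(S_\BV^\hbar)-1)=1$ cancels the leading minus sign). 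Substituting this into the identity obtained above gives $\bigl(\lap\mathcal{O} + (i/\hbar)\schouten{S_\BV^\hbar,\mathcal{O}}\bigr)\exp(iS_\BV^\hbar/\hbar) = 0$.

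Finally I would cancel the factor $\exp(iS_\BV^\hbar/\hbar)$, which is invertible (its formal inverse is $\exp(-iS_\BV^\hbar/\hbar)$, and pointwise it is nowhere zero), obtaining $\lap\mathcal{O} + (i/\hbar)\schouten{S_\BV^\hbar,\mathcal{O}} = 0$; multiplying through by $-i\hbar$ then gives exactly $\Omega(\mathcal{O}) = \schouten{S_\BV^\hbar,\mathcal{O}} - i\hbar\lap\mathcal{O} = 0$. The only point requiring care is the handling of the formal series $\exp(iS_\BV^\hbar/\hbar)$ -- the term-by-term application of Lemma~\ref{thm:SchoutenPower} and of the Leibniz rules, and the cancellation of the exponential factor -- but this is precisely the bookkeeping already carried out in Proposition~\ref{thm:QME} and its two lemmas, under the standing assumption that $S_\BV^\hbar$ is a (sum of) integral functional(s) tensored with $\Bbbk[[\hbar,\hbar^{-1}]]$, so I do not expect a genuine obstacle there.
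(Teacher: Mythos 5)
Your proposal is correct and follows essentially the same route as the paper: apply the Leibniz rule of Theorem~\ref{thm:LeibnizLaplacian} to $\lap\bigl(\mathcal{O}\exp(iS_\BV^\hbar/\hbar)\bigr)$, kill two terms by the hypotheses, and evaluate $\schouten{\mathcal{O},\exp F}=\schouten{\mathcal{O},F}\exp F$ via Lemma~\ref{thm:SchoutenPower}. Your extra remarks (the even-even skew-symmetry giving $\schouten{\mathcal{O},S_\BV^\hbar}=\schouten{S_\BV^\hbar,\mathcal{O}}$ and the cancellation of the invertible factor $\exp(iS_\BV^\hbar/\hbar)$) just make explicit what the paper leaves in ``from which the assertion follows.''
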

\begin{proof}
   For convenience, let us set $F = \frac{i}{\hbar}S_\BV^\hbar$ again. We first calculate, using Lemma~\ref{thm:SchoutenPower},
   \[
      \schouten{\mathcal{O},\exp F}
      = \sum_{n=0}^\infty\frac1{n!}\schouten{\mathcal{O},F^n}
      = \sum_{n=0}^\infty\frac{n}{n!}\schouten{\mathcal{O},F}F^{n-1}
      = \schouten{\mathcal{O},F}\exp F.
   \]
   Then
   \begin{align*}
      0 &= \lap(\mathcal{O}\exp F) 
         = (\lap \mathcal{O})\exp F + \schouten{\mathcal{O}, \exp F} + \mathcal{O}\lap(\exp F) \\
        &= \big((\lap \mathcal{O}) + \schouten{\mathcal{O}, F}\big)\exp F 
         = \left((\lap \mathcal{O}) + \frac{i}{\hbar}\schouten{\mathcal{O}, S_\BV^\hbar}\right)\exp\left(\frac{i}{\hbar}S_\BV^\hbar\right),
   \end{align*}
   from which the assertion follows.
\end{proof}

\begin{proposition}
Let $\mathcal{O} \in \ol{\mathfrak{M}}^n(\pi_\BV)$ be an integral functional, and let the even functional $S_\BV^\hbar\in \ol{H}^n(\pi_\BV)$ satisfy the quantum master equation~\eqref{eq:QME} for the BV-Laplacian $\lap$. Then the operator $\Omega$, defined in~\eqref{eq:def-Omega}, squares to zero\textup{:} 
\[ \Omega^2(\mathcal{O}) = 0. \]
\end{proposition}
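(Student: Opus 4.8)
The plan is to expand $\Omega^2$ directly and then collapse everything onto the quantum master equation, using the structural results already at our disposal: that $\lap$ is a graded derivation of $\schouten{\,,}$ (Theorem~\ref{ThLapSchouten}), that $\lap^2=0$ (Theorem~\ref{ThBVDifferential}), and the graded Jacobi identity~\eqref{EqJacobiSchouten} for $\schouten{\,,}$. Throughout I abbreviate $S:=S_\BV^\hbar$ and use that $S$ is \emph{even}, so $\gh(S)=0$ and $(-)^{\gh(S)-1}=-1$; also $i\hbar$ is a scalar in the ground ring $\Bbbk[[\hbar,\hbar^{-1}]]$, so the bilinearity of $\schouten{\,,}$ over it applies.

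First I would write out, from $\Omega(\mathcal{O})=\schouten{S,\mathcal{O}}-i\hbar\,\lap\mathcal{O}$,
\[
\Omega^2(\mathcal{O})=\schouten{S,\schouten{S,\mathcal{O}}}-i\hbar\,\schouten{S,\lap\mathcal{O}}-i\hbar\,\lap\schouten{S,\mathcal{O}}+(i\hbar)^2\,\lap^2\mathcal{O}.
\]
The last term vanishes by Theorem~\ref{ThBVDifferential}. For the third term, Theorem~\ref{ThLapSchouten} with $F=S$ gives $\lap\schouten{S,\mathcal{O}}=\schouten{\lap S,\mathcal{O}}-\schouten{S,\lap\mathcal{O}}$, so $-i\hbar\,\lap\schouten{S,\mathcal{O}}$ contributes $-i\hbar\,\schouten{\lap S,\mathcal{O}}+i\hbar\,\schouten{S,\lap\mathcal{O}}$, which cancels the second term. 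This leaves $\Omega^2(\mathcal{O})=\schouten{S,\schouten{S,\mathcal{O}}}-i\hbar\,\schouten{\lap S,\mathcal{O}}$.

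The second step treats the double bracket. Applying the Jacobi identity in the form used in the proof of Theorem~\ref{ThLapSchouten}, namely $\schouten{F,\schouten{G,H}}=\schouten{\schouten{F,G},H}+(-)^{(\gh(F)-1)(\gh(G)-1)}\schouten{G,\schouten{F,H}}$, with $F=G=S$ and $H=\mathcal{O}$, the sign $(-)^{(\gh(S)-1)(\gh(S)-1)}$ equals $-1$, giving $\schouten{S,\schouten{S,\mathcal{O}}}=\schouten{\schouten{S,S},\mathcal{O}}-\schouten{S,\schouten{S,\mathcal{O}}}$, hence $\schouten{S,\schouten{S,\mathcal{O}}}=\tfrac12\schouten{\schouten{S,S},\mathcal{O}}$. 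Now the quantum master equation~\eqref{eq:QME} reads $\tfrac12\schouten{S,S}=i\hbar\,\lap S$, so by bilinearity $\tfrac12\schouten{\schouten{S,S},\mathcal{O}}=\schouten{i\hbar\,\lap S,\mathcal{O}}=i\hbar\,\schouten{\lap S,\mathcal{O}}$. Substituting back, $\Omega^2(\mathcal{O})=i\hbar\,\schouten{\lap S,\mathcal{O}}-i\hbar\,\schouten{\lap S,\mathcal{O}}=0$.

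There is no genuine obstacle beyond careful sign bookkeeping; the single point to watch is that both the cancellation in the first step and the collapse of the Jacobi identity to ``one half'' of the double bracket rely on $S$ being even, which is precisely the standing hypothesis (and that of Proposition~\ref{thm:QME}). No separate induction over the building blocks of $\mathcal{O}$ is required, since Theorems~\ref{ThLapSchouten} and~\ref{ThBVDifferential} already hold on all of $\ol{\mathfrak{N}}^n(\pi_\BV)$.
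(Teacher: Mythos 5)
Your proof is correct and follows essentially the same route as the paper: expand $\Omega^2(\mathcal{O})$, kill the $\lap^2$ term by Theorem~\ref{ThBVDifferential}, use Theorem~\ref{ThLapSchouten} to cancel the cross terms, and then collapse $\schouten{S,\schouten{S,\mathcal{O}}}$ via the Jacobi identity onto the quantum master equation. Your extra care with the evenness of $S_\BV^\hbar$ in the sign bookkeeping merely spells out what the paper's proof uses implicitly.
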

\begin{proof}
We calculate, using Theorem~\ref{ThLapSchouten},
\begin{align*}
\Omega^2(\mathcal{O}) 
&=[\![S_\BV^\hbar,\schouten{S_\BV^\hbar,\mathcal{O}}-i\hbar\lap \mathcal{O}]\!] - i\hbar\lap\big(\schouten{S_\BV^\hbar,\mathcal{O}} - i\hbar\lap \mathcal{O}\big) \\
&=[\![S_\BV^\hbar,\schouten{S_\BV^\hbar,\mathcal{O}}]\!] - i\hbar\schouten{S_\BV^\hbar,\lap \mathcal{O}} - i\hbar\schouten{\lap S_\BV^\hbar,\mathcal{O}} + i\hbar\schouten{S_\BV^\hbar,\lap \mathcal{O}} + (i\hbar)^2\lap^2\mathcal{O}.
\end{align*}
The last term vanishes identically by Theorem~\ref{ThBVDifferential}, while the second term cancels against the fourth term. Using the Jacobi identity~\eqref{EqJacobiSchouten} for the Schouten bracket on the first term, we obtain:
\begin{align*}
\Omega^2(\mathcal{O}) 
&= {\textstyle\frac12}[\![\schouten{S_\BV^\hbar,S_\BV^\hbar},\mathcal{O}]\!] - i\hbar\schouten{\lap S, \mathcal{O}} 
 = [\![{\textstyle\frac12}\schouten{S_\BV^\hbar,S_\BV^\hbar}-i\hbar\lap S_\BV^\hbar, \mathcal{O}]\!] 
 = 0,
\end{align*}
by the quantum master equation for $S_\BV^\hbar$.
\end{proof}

The introduction of the quantum BV-differential $\Omega$ concludes our overview of the geometric properties of the Batalin-Vilkovisky Laplacian; yet the story continues with the quantum BV-cohomology theory.

\subsection*{Conclusion}
We have outlined a natural class of geometries and described the spaces of admissible functionals (almost none of which are integral functionals except the layer of building blocks such as the action~$S$ of the model or the corresponding BV-\/action~$S_\BV$) such that the regularization of the BV-\/Laplacian~$\lap_\BV$ is automatic. The operational definitions of~$\schouten{\,,}$ and~$\lap_\BV$ and the validity of their intrinsic properties 
justify the traditional algorithms which are employed in a vast part of the literature to cope with the regularization of~$\delta$-\/functions or infinite constants in this context. At the same time, we expect that the self\/-\/regularization mechanism, which we have described and used in this note, can be adapted to and work in a wider set of geometries and formalism that involve double variations or variational derivatives of functionals with respect to pairs of canonically conjugate variables.

We finally remark that the jet\/-\/bundle setup and differential calculus do not appeal to commutativity in the models (e.g., to the $\mathbb{Z}_2$-\/graded commutativity if~$\pi = \bigl(\pi^{\overline{0}}|\pi^{\overline{1}}\bigr)$ is the superbundle of physical fields). In fact, the entire approach remains valid in the formal noncommutative geometry based on the calculus of cyclic\/-\/invariant words (see~\cite{Lorentz12,ArthemyChristmas} and references therein).

\subsubsection*{Acknowledgments}
The authors thank G.~Felder for stimulating remarks.
The first author thanks the organizing committee of the international workshop SQS'11 `Supersymmetry and Quantum Symmetries' (July 18--23, 2011; JINR~Dubna, Russia) for helpful discussions. The research of the first author was partially supported by NWO~VENI grant~639.031.623 (Utrecht) and JBI~RUG project~103511 (Groningen).

\bibliographystyle{sigma}
\bibliography{refs}

\end{document}